  \DeclareSymbolFont{AMSb}{U}{msb}{m}{n}
  \DeclareSymbolFontAlphabet{\mathbb}{AMSb}
\DeclareFontFamily{U}{mathx}{\hyphenchar\font45}
\DeclareFontShape{U}{mathx}{m}{n}{<-> mathx10}{}
\DeclareSymbolFont{mathx}{U}{mathx}{m}{n}
\DeclareMathAccent{\widebar}{0}{mathx}{"73}
\tikzstyle{arrow} = [thick,->,>=stealth]
\tikzset{Matrix/.style={matrix of nodes, font=\footnotesize,text height=1pt, text depth=0.5pt, text width=8.5pt, align=center, column sep=0pt, row sep=0pt, nodes in empty cells}}
\newtheorem{remark}{Remark}
\newtheorem{lemma}{Lemma}
\newtheorem{theorem}{Theorem}
\definecolor{darkred}{rgb}{0.82,0.15,0.20}
\definecolor{darkblue}{rgb}{0.82,0.15,0.12}
\numberwithin{equation}{section}
\numberwithin{figure}{section}
\numberwithin{table}{section}
\numberwithin{lemma}{section}
\numberwithin{corollary}{section}
\numberwithin{theorem}{section}
\numberwithin{remark}{section}
\newcommand\cero{\boldsymbol{0}}
\newcommand\bH{\mathbf{H}}
\newcommand\bL{\mathbf{L}}
\newcommand\bI{\mathbf{I}}
\newcommand\bbP{\mathbb{P}}
\newcommand\bV{\mathbf{V}}
\newcommand\bX{\boldsymbol{X}}
\newcommand\beps{\boldsymbol{\varepsilon}}
\newcommand\ff{\boldsymbol{f}}
\newcommand\bb{\boldsymbol{b}}
\newcommand\bg{\boldsymbol{g}}
\newcommand\nn{\boldsymbol{n}}
\newcommand\bsigma{\boldsymbol{\sigma}}
\newcommand\bu{\boldsymbol{u}}
\newcommand\bv{\boldsymbol{v}}
\newcommand\bx{\boldsymbol{x}}
\newcommand\cA{\mathcal{A}}
\newcommand\cB{\mathcal{B}}
\newcommand\cC{\mathcal{C}}
\newcommand\cD{\mathcal{D}}
\newcommand\cE{\mathcal{E}}
\newcommand\cJ{\mathcal{J}}
\newcommand\cM{\mathcal{M}}
\newcommand\cP{\mathcal{P}}
\newcommand\RR{\mathbb{R}}
\newcommand\cT{\mathcal{T}}
\newcommand\bzeta{\boldsymbol{\zeta}}
\newcommand{\rQ}{\mathrm{Q}}
\newcommand{\rZ}{\mathrm{Z}}
\newcommand{\rL}{\mathrm{L}}
\newcommand{\estE}{\Theta_{K}}
\newcommand{\estP}{\Psi_{K}}
\newcommand{\estint}{\Lambda_{e}}
\newcommand{\UpsilonE}{\widetilde{\Upsilon}_{\!K}}
\newcommand{\UpsilonP}{\widehat{\Upsilon}_{\!K}}
\newcommand\rH{\mathrm{H}}
\newcommand{\norm}[1]{\left\|#1\right\|}
\newcommand\bdiv{\mathop{\mathbf{div}}\nolimits}
\newcommand\vdiv{\mathop{\mathrm{div}}\nolimits}
\newcommand\bnabla{\boldsymbol{\nabla}}
\newcommand\Tend{t_{\mathrm{final}}}
\newcommand\OmP{\Omega^{\mathrm{P}}}
\newcommand\OmE{\Omega^{\mathrm{E}}}
\newcommand\rmP{\mathrm{P}}
\newcommand\rmE{\mathrm{E}}
\newcommand{\mean}[1]{{\left\{\kern-0.7ex\left\{ #1 
		\right\}\kern-0.7ex\right\}}}
\DeclarePairedDelimiter\jump{\llbracket}{\rrbracket}
\DeclareSymbolFont{fouriersymbols}{FMS}{futm}{m}{n}
\DeclareSymbolFont{fourierlargesymbols}{FMX}{futm}{m}{n}
\DeclareMathDelimiter{\VERT}{\mathord}{fouriersymbols}{152}{fourierlargesymbols}{147}
\newcolumntype{g}{ >{\columncolor{mygrey}} c }
\renewenvironment{proof}{\noindent{\it Proof.}}{\hfill$\square$}
\address{$^1$School of mathematics, Monash university, 9 Rainforest Walk, Melbourne 3800 VIC, Australia; 
and Centre Internacional de M\`etodes Num\`erics a l'Enginyeria, Campus Nord, 08034, Barcelona, Spain.}
\address{$^2$Department of Mathematics,  University of Oslo, Norway.}
\address{$^3$Department of Mathematics, Indian Institute of Technology Roorkee, Roorkee 247667, India.}
\address{$^4$School of Computing, Australian National University, Acton ACT 2601, Australia.}
\address{$^5$School of Mathematics and Victorian Heart Institute, Monash University, 9 Rainforest Walk, Melbourne 3800 VIC, Australia; 
and Universidad Adventista de Chile, Casilla 7-D, Chill\'an, Chile.}
\author[S. Badia]{Santiago Badia$^{1}$}
\author[M. Hornkj{\o}l]{Martin Hornkj{\o}l$^{2}$}
\author[A. Khan]{Arbaz Khan$^{3}$}
\author[K.-A. Mardal]{Kent-Andr\'e Mardal$^{2}$}
\author[A. F. Mart\'{\i}n]{Alberto F. Mart\'{\i}n$^{4}$}
\author[R. Ruiz-Baier]{Ricardo Ruiz-Baier$^{5}$}
\email{santiago.badia@monash.edu}
\email{marhorn@math.uio.no}
\email{arbaz@ma.iitr.ac.in}
\email{kent-and@simula.no}
\email{alberto.f.martin@anu.edu.au}
\email{ricardo.ruizbaier@monash.edu}  
\begin{document}

		\title[H(div)-conforming elasticity-poroelasticity coupling]{Efficient and reliable divergence-conforming methods for an elasticity-poroelasticity interface problem}
                \date{\today}
                \maketitle
		\begin{abstract}
		We present a finite element discretisation  to model the interaction between a poroelastic structure and an elastic medium. The consolidation problem considers fully coupled deformations across an interface, ensuring continuity of displacement and total traction, as well as no-flux for the fluid phase. Our formulation of the poroelasticity equations incorporates displacement, fluid pressure, and total pressure, while the elasticity equations adopt a displacement-pressure formulation. Notably, the transmission conditions at the interface are enforced without the need for Lagrange multipliers. 
    We demonstrate the stability and convergence of the divergence-conforming finite element method across various polynomial degrees. The \emph{a priori} error bounds remain robust, even when considering large variations in intricate model parameters such as Lam\'e constants, permeability, and storativity coefficient. To enhance computational efficiency and reliability, we develop residual-based \emph{a posteriori} error estimators that are independent of the aforementioned coefficients. Additionally, we devise parameter-robust and optimal block diagonal preconditioners. Through numerical examples, including adaptive scenarios, we illustrate the scheme's properties such as convergence and parameter robustness. 
	
	\smallskip
    \noindent \textbf{Keywords.} Biot--elasticity transmission equations, mixed finite element methods, divergence-conforming schemes, \emph{a priori} error analysis, \emph{a posteriori} error analysis, operator preconditioning.
		\end{abstract}

\maketitle

\section{Introduction}
In a variety of engineering and biomedical applications, poroelastic bodies are either surrounded or in contact with a purely elastic material. Examples include filter design, prosthetics, simulation of oil extraction from reservoirs, carbon sequestration, and sound insulation structures. From the viewpoint of constructing and analysing numerical methods,  recent works for the interfacial Biot/elasticity problem can be found in \cite{anaya20,anaya22,gira11,gira20,girault15}. These contributions include mortar-type discretisations, formulations using rotations, and extensions to lubrication models. In this work we focus on H(div)-conforming discretisations of displacement for the transmission problem, in combination with a total pressure formulation for both the elastic and poroelastic sub-domains.   Divergence-conforming methods with tangential jump penalisation for elasticity were already  proposed in  \cite{hong16}. Their counterpart  for Biot poroelasticity equations (in the two-field formulation) have been introduced in \cite{kanschat18,zeng19,zeng20}, while a much more abundant literature is available for Brinkman flows (as well as coupled flow-transport problems) in \cite{buerger22,buerger19,hong16b,konno11}. The extension to interfacial porous media flow has been addressed in \cite{carvalho20,chen11,fu18,kanschat10}.  In general, such type of discretisations offer appealing features such as  local conservation of mass, and the ability to produce robust and locking--free schemes. These type of schemes are required because of the large number of parameters upon which robustness is targeted (especially in the limits of near incompressibility and near impermeability).  

As regularity of the solution is not always available (due to possibly high contrast in material parameters, domain singularities, etc), we are also interested in deriving \emph{a posteriori} error estimates which allow us to apply adaptive mesh refinement in the regions where it is most required. A coupled elliptic--parabolic \emph{a posteriori} error analysis for Biot poroelasticity and multiple network poroelasticity is available from the works \cite{ahmed19,eliseussen,li20}. On the other hand, robust estimates for the elasticity--poroelasticity coupling have been obtained only  recently, for enriched Galerkin methods in \cite{gira20}, and in  \cite{anaya22} for rotation-based formulations.

Here the analysis is carried out considering two examples of fluid pressure approximation: either continuous or discontinuous piecewise polynomials. For the DG case we use a classical symmetric interior penalty (SIP) method.  In all cases the proposed formulation is robust with respect to material parameters that can assume very small or very large values, including the extreme cases of near incompressibility, near impermeability, and near zero storativity. This parameter independence in the stability of the discrete problem is critical in the \emph{a priori} error bounds, in the derivation of \emph{a posteriori} error estimates, and in the design of robust preconditioners. 

Finally, we design optimal preconditioner that are robust with respect to parameters. The preconditioner is block-diagonal and its definition relies on the stability properties of the proposed numerical scheme, i.e., it consists in a discretisation of the continuous Riesz map (see \cite{lee17,hong16b} for similar approaches). The definition of the pressure block is motivated by \cite{olshanskii06}, where a robust preconditioner for the interface Stokes problems with high contrast is proposed.

The remainder of the paper has been structured as follows. Section~\ref{sec:problem} is devoted to the description of the interfacial problem, it states boundary and transmission conditions, and there we give the continuous weak formulation. The discrete problem in two different formulations is defined in Section~\ref{sec:FE}. The stability and solvability of the H(div)-conforming methods is addressed in Section~\ref{sec:apriori}.  Residual-based \emph{a posteriori} error estimators are constructed and analysed in Section~\ref{sec:aposte}. The operator framework and definition of a norm-equivalent preconditioner is addressed in Section~\ref{sec:robustness}, and numerical methods that confirm the properties of the proposed methods are collected in Section~\ref{sec:results}.

\section{Problem statement}\label{sec:problem}	 
\subsection{Preliminaries}
Standard notation on Lebesgue and Sobolev spaces together with their associated norms will be adopted throughout the presentation. For $s\geq 0$ and a generic domain $\cD$, the symbol  $\rH^s(\cD)$ denotes the usual Sobolev space equipped with the norm $\|\cdot\|_{s,\cD}$ and seminorm $|\cdot|_{s,\cD}$. The case $s=0$ is understood as the space $\rL^2(\cD)$. Boldfaces will be used to denote vector-valued spaces, maintaining the same notation as scalar spaces for the norms and seminorms. For a Banach space $V$, we will use the symbol $V'$ to denote its dual space. 
We also recall the definition of the space $\bH(\vdiv,\cD):=\{\bv\in \bL^2(\cD): \vdiv \bv \in \rL^2(\cD)\}$, which is of Hilbert type when endowed with the norm $\|\bv\|^2_{\vdiv,\cD} =  \|\bv\|^2_{0,\cD}+\|\vdiv\bv\|^2_{0,\cD}$.  As usual, throughout the paper the notation $A \lesssim B$ will abbreviate the inequality $A\leq CB$ where $C$ is a generic constant that does not depend on the maximal mesh sizes $h$ nor on the sensitive parameters of the model, in particular the Lam\'e parameters on each subdomain (and will proceed similarly for $A\gtrsim B$). The constants in the inequalities will be specified whenever necessary from the context.

\subsection{The transmission problem}

Following the problem setup from \cite{gira11,gira20}, let us consider a bounded Lipschitz domain $\Omega\subset\RR^d$, $d \in \{2,3\}$,
together with a partition into non-overlapping and connected subdomains $\OmE$, $\OmP$ representing zones occupied by an elastic body (e.g., a non-pay rock, in the context of reservoir modelling) and a fluid-saturated poroelastic region (e.g., a reservoir), respectively. The interface between the two subdomains is denoted as $\Sigma=\partial\OmP\cap \partial\OmE$, and on it the normal vector $\nn$ is assumed to point from $\OmP$ to $\OmE$. The boundary of the domain $\Omega$ is separated in terms of the boundaries of two individual subdomains, that is $\partial \Omega:=\Gamma^\mathrm{P} \cup \Gamma^\mathrm{E}$, and then subdivided as the disjoint Dirichlet and Neumann type condition as $\Gamma^\mathrm{P}:= \Gamma^\mathrm{P}_D \cup \Gamma^\mathrm{P}_N$ and $\Gamma^\mathrm{E}:= \Gamma^\mathrm{E}_D \cup \Gamma^\mathrm{E}_N$, respectively. We assume that all sub-boundaries have positive $(d-1)$--Hausdorff measure.

In the overall domain we state the momentum balance of the fluid and solid phases on the poroelastic region, the mass conservation of the total amount of fluid, and the balance of linear momentum on the elastic region. In doing so, and following \cite{anaya20}, in addition to the usual variables of elastic displacement, poroelastic displacement, and fluid pressure, we employ the total pressure in the poroelastic subdomain, and the Herrmann pressure in the elastic subdomain. For given body loads $\bb^\mathrm{P}(t):\OmP\to \RR^d$, $\bb^\mathrm{E}(t):\OmE\to \RR^d$, and a volumetric source or sink $\ell^\mathrm{P}(t):\OmP\to \RR$, one seeks for each time $t\in (0,\Tend]$, the vector of solid displacements $\bu^\mathrm{E}:\OmE\to \RR^d$ of the non-pay zone, the elastic pressure  $\varphi^\mathrm{E}:\OmE\to\RR$, the displacement $\bu^\mathrm{P}(t):\OmP\to \RR^d$, the pore fluid pressure $p^\mathrm{P}(t):\OmP\to\RR$, and the total pressure $\varphi^\mathrm{P}(t):\OmP\to\RR$ of the reservoir, satisfying:
\begin{subequations}\label{eq:coupled}
\begin{align}
-\bdiv( 2{\mu^\mathrm{P}} \beps(\bu^\mathrm{P})- \varphi^\mathrm{P} \bI)&
= \bb^\mathrm{P} & \text{in $\OmP\times(0,\Tend]$},\label{eq:poro-momentum}\\
\biggl(c_0
+\frac{\alpha^2}{{\lambda^\mathrm{P}}}\biggr) \partial_t p^\mathrm{P} -\frac{\alpha}{{\lambda^\mathrm{P}}}  \partial_t \varphi^\mathrm{P}
- \frac{1}{\eta} \vdiv(\kappa  \nabla p^\mathrm{P} ) &= \ell^\mathrm{P}
& \text{in $\OmP\times(0,\Tend]$},\label{eq:Biot} \\
\varphi^\mathrm{P} - \alpha p^\mathrm{P} + {\lambda^\mathrm{P}} \vdiv\bu^\mathrm{P} &=
0 & \text{in $\OmP\times(0,\Tend]$},\label{eq:poro-constitutive}\\
-\bdiv( 2{\mu^\mathrm{E}} \beps(\bu^\mathrm{E})- \varphi^\mathrm{E} \bI)&
= \bb^\mathrm{E} & \text{in $\OmE\times(0,\Tend]$},\label{eq:Elast}\\
\varphi^\mathrm{E} + {\lambda^\mathrm{E}}\vdiv\bu^\mathrm{E} &=
0 & \text{in $\OmE\times(0,\Tend]$}. \label{eq:elast-constitutive}
\end{align}\end{subequations}
Here $\kappa(\bx)$ is the hydraulic conductivity of the porous medium, $\eta$ is the constant viscosity of the interstitial fluid, $c_0$ is the storativity coefficient, $\alpha$ is the Biot--Willis consolidation parameter, and $\mu^\mathrm{E},\lambda^\mathrm{E}$ and $\mu^\mathrm{P},\lambda^\mathrm{P}$ are the Lam\'e parameters associated with the constitutive law of the solid on the elastic and on the poroelastic subdomain, respectively.  The poroelastic stress $\widetilde{\bsigma} = \bsigma-\alpha p^{\rmP} \bI$ is composed by the effective mechanical stress ${\lambda^\mathrm{P}} (\vdiv \bu^{\rmP})\bI + 2{\mu^\mathrm{P}} \beps(\bu^{\rmP})$ plus  the non-viscous fluid stress (the fluid pressure scaled with $\alpha$). This system is complemented by the following set of boundary conditions
\begin{subequations}
\begin{align}
\label{bc:GammaP}
\bu^\mathrm{P} = \cero \quad \text{and} \quad \frac{\kappa}{\eta} \nabla p^\mathrm{P} \cdot\nn^{\Gamma} = 0  & &\text{on $\Gamma^\mathrm{P}_D\times(0,\Tend]$},\\
\label{bc:SigmaP}
[2{\mu^\mathrm{P}} \beps(\bu^\mathrm{P})- \varphi^\mathrm{P} \bI]
\nn^{\Gamma} = \cero \quad  \text{and}\quad p^\mathrm{P}=0
& &\text{on $\Gamma^\mathrm{P}_N\times(0,\Tend]$},\\
\label{bc:Gamma}
\bu^\mathrm{E} = \cero & &\text{on $\Gamma^\mathrm{E}_D\times(0,\Tend]$},\\
\label{bc:Sigma}
[2{\mu^\mathrm{E}} \beps(\bu^\mathrm{E})- \varphi^\mathrm{E} \bI]\nn^{\Gamma} = \cero
& &\text{on $\Gamma^\mathrm{E}_N\times(0,\Tend]$}.
\end{align}\end{subequations}
Here, the partition $\Gamma^\mathrm{P}:= \Gamma^\mathrm{P}_D \cup \Gamma^\mathrm{P}_N$ denotes the sub-boundaries where we 
impose essential (i.e., $\bu^\mathrm{P} = \cero$) and natural boundary conditions (i.e., $\widetilde{\bsigma}\nn^{\Gamma} = \cero$) corresponding to equation \eqref{eq:poro-momentum}. For ease of notation, we note that, in this definition, we are assuming that the essential and natural sub-boundaries corresponding to equation \eqref{eq:Biot} (i.e., the ones where we impose $p^\mathrm{P}=0$ and $\nabla p^\mathrm{P} \cdot\nn^{\Gamma} = 0$, respectively) match the natural and essential sub-boundaries associated to \eqref{eq:poro-momentum}, respectively. However, in general, this does not have to be case, and one may 
choose separately the partition into essential and natural for each of these 
two equations separately. 

Along with the previous set of boundary conditions, the system is also complemented
by  transmission conditions in the absence of external forces (derived by means of homogenisation in \cite{mikelic12}) that take the following form 
\begin{equation}\label{eq:transmission2}
\bu^\mathrm{P} = \bu^\mathrm{E}, \quad  
[2{\mu^\mathrm{P}} \beps(\bu^\mathrm{P})- \varphi^\mathrm{P} \bI]\nn
=[2{\mu^\mathrm{E}} \beps(\bu^\mathrm{E})- \varphi^\mathrm{E} \bI]\nn, \quad
\frac{\kappa}{\eta}\nabla p^\mathrm{P}\cdot\nn = 0
\quad \text{on $\Sigma\times(0,\Tend]$},
\end{equation}
which represent continuity of the medium, the balance of total tractions, and no-flux of fluid at the interface, respectively. An advantage with respect to \cite{anaya20} is that here we can use the full poroelastic stresses to impose the transmission conditions. We also consider the following initial conditions 
\begin{equation*}
p^{\rmP}(0) =0, \quad \bu^{\rmP}(0) = \cero  \qquad \qquad  \text{in $\Omega^\mathrm{P}$}.
\end{equation*}
Homogeneity of the boundary and initial conditions is only assumed to simplify the exposition of the subsequent analysis, however the results remain valid for more general assumptions.  {We also note that non-homogeneous boundary conditions are used in the numerical experiments.}

\subsection{A weak formulation}
 As the paper focuses on the spatial discretisation, we will restrict the problem formulation to the steady case. For this, we  
 define the function spaces 
$$\bV^\mathrm{E}:=\bH^1_{\Gamma_D^\rmE}(\Omega^\mathrm{E}),
\quad \bV^\mathrm{P}:=\bH^1_{\Gamma_D^\rmP}(\Omega^\mathrm{P}),
\quad \rQ^\mathrm{P}:= \rH^1_{\Gamma_N^\rmP}(\OmP),\quad
\rZ^\mathrm{E}:=\rL^2(\OmE),\quad \rZ^\mathrm{P}:=\rL^2(\OmP).$$ 

Considering a backward Euler discretisation in time with constant time step $\Delta t$, 
multiplying the time-discrete version of  \eqref{eq:Biot} by adequate test functions, integrating by parts (in space) whenever appropriate, and using the boundary conditions \eqref{bc:Gamma}-\eqref{bc:Sigma}, leads to the following  weak formulation: 
Find $\bu^\mathrm{P} \in \bV^\mathrm{P}, \bu^\mathrm{E} \in \bV^\mathrm{E}, p^\mathrm{P} \in \rQ^\mathrm{P}, \varphi^\mathrm{E} \in \rZ^\mathrm{E}, \varphi^\mathrm{P}(t) \in \rZ^\mathrm{P}$ such that
\begin{align*}
2{\mu^\mathrm{P}} ( \beps(\bu^\mathrm{P}),\beps(\bv^\mathrm{P}))_{0,\OmP}
-(\varphi^\mathrm{P},\vdiv \bv^\mathrm{P})_{0,\OmP}
-\langle [2{\mu^\mathrm{P}} \beps(\bu^\mathrm{P})
- \varphi^\mathrm{P}\bI]\nn, \bv^\mathrm{P} \rangle_{{\Gamma^P_D}} &= (\bb^\mathrm{P},\bv^\mathrm{P})_{0,\OmP} 
,\\
(\varphi^\mathrm{P},\psi^\mathrm{P})_{0,\OmP}-\alpha(p^\mathrm{P},\psi^\mathrm{P})_{0,\OmP}
+{\lambda^\mathrm{P}}(\psi^\mathrm{P},\vdiv \bu^\mathrm{P})_{0,\OmP} &=0,\\
\frac{1}{\Delta t}\biggl( c_0 +\frac{\alpha^2}{{\lambda^\mathrm{P}}}\biggr)( p^\mathrm{P}, q^\mathrm{P})_{0,\OmP}
-\frac{1}{\Delta t}\frac{\alpha}{{\lambda^\mathrm{P}}} ( \varphi^\mathrm{P},q^\mathrm{P})_{0,\OmP}
+\frac{1}{\eta}(\kappa \nabla p^\mathrm{P},\nabla q^\mathrm{P})_{0,\OmP}
-\frac{1}{\eta}\langle \kappa \nabla p^\mathrm{P}\cdot\nn,q^\mathrm{P} \rangle_{\partial\OmP}
&=(\ell^{\mathrm{P}},q^\mathrm{P})_{0,\OmP},\\
2{\mu^\mathrm{E}}( \beps(\bu^\mathrm{E}),\beps(\bv^\mathrm{E}))_{0,\OmE}
-(\varphi^\mathrm{E},\vdiv \bv^\mathrm{E})_{0,\OmE}
-\langle [2{\mu^\mathrm{E}} \beps(\bu^\mathrm{E})
- \varphi^\mathrm{E}\bI]\nn^{\partial\OmE}, \bv^\mathrm{E}\rangle_{{\Gamma_D^E}}&=(\bb^\mathrm{E},\bv^\mathrm{E})_{0,\OmE} 
,\\
(\varphi^\mathrm{E},\psi^\mathrm{E})_{0,\OmE} +{\lambda^\mathrm{E}}(\psi^\mathrm{E},\vdiv \bu^\mathrm{E})_{0,\OmE} &=0.
\end{align*}
Note that we can simply define a \emph{global displacement} $\bu\in\bV:=\bH^1_{\Gamma_D}(\Omega)$ (through continuity of the medium in \eqref{eq:transmission2}) such that $\bu|_{\OmP}=\bu^\mathrm{P}$ and $\bu|_{\OmE}=\bu^\mathrm{E}$; as well as a \emph{global pressure} (it is the total pressure on the poroelastic medium and the elastic hydrostatic pressure on the elastic subdomain)   $\varphi\in \rZ:=\rL^2(\Omega)$ such that $\varphi|_{\OmP}=\varphi^\mathrm{P}$ and $\varphi|_{\OmE}=\varphi^\mathrm{E}$. Similarly,  we define the body load $\bb\in \bL^2(\Omega)$ composed by 	$\bb|_{\OmP}=\bb^\mathrm{P}$ and $\bb|_{\OmE}=\bb^\mathrm{E}$, and also the global Lam\'e parameters $\mu$ and $\lambda$ as $\mu|_{\OmP}=\mu^\mathrm{P},\,\lambda|_{\OmP}=\lambda^\mathrm{P}$ and $\mu|_{\OmE}=\mu^\mathrm{E},\,\lambda|_{\OmE}=\lambda^\mathrm{E}$. 
{We also multiply the weak form of the mass conservation equation by -1}.  
The steps above in combination with the second and third transmission conditions in \eqref{eq:transmission2}, yield: Find $\bu\in \bV,\, p^\mathrm{P} \in \rQ^\mathrm{P},\,\varphi\in\rZ$ such that
\begin{subequations}\label{weakEP}
\begin{alignat}{5}
&&   a_1(\bu,\bv)   &&                 &\;+&\; b_1(\bv,\varphi)     &=&\;F(\bv)&\quad\forall \bv\in\bV, \label{weak-u}\\
{-}\tilde{a}_2\biggl(\frac{1}{\Delta t} p^\mathrm{P},q^\mathrm{P}\biggr) &\; {-}&               &&      a_2(p^\mathrm{P},q^\mathrm{P})   &\;{+}&\;    b_2\biggl(q^\mathrm{P}, \, \frac{1}{\Delta t} \varphi\biggr) &=&\;G(q^\mathrm{P}) &\quad\forall q^\mathrm{P}\in \rQ^\mathrm{P}, \label{weak-p}\\
&&b_1(\bu,\psi)  &\;+\;& b_2(p^\mathrm{P},\psi)&\;-&\; a_3(\varphi,\psi) &=&0 &\quad\forall\psi\in\rZ, \label{weak-psi} 
\end{alignat}\end{subequations}
where the bilinear forms 
$a_1:\bV\times\bV \to \RR$,
$a_2:Q^\mathrm{P}\times \rQ^\mathrm{P} \to \RR$, $a_3:\rZ\times\rZ\to \RR$,
$b_1:\bV \times \rZ \to \RR$, $b_2:Q^\mathrm{P} \times \rZ\to \RR$,
and linear functionals $F:\bV \to\RR$, $G:Q^\mathrm{P}\to\RR$, adopt the following form 
\begin{gather}
a_1(\bu,\bv):=   2( \mu\,\beps(\bu),\beps(\bv))_{0,\Omega},
\qquad b_1(\bv,\psi):= -(\psi,\vdiv \bv)_{0,\Omega}, \qquad F(\bv) := ( \bb ,\bv)_{0,\Omega},\nonumber\\
\tilde{a}_2(p^\mathrm{P},q^\mathrm{P})  := \biggl( c_0 +\frac{\alpha^2}{{\lambda^\mathrm{P}}}\biggr)
(p^\mathrm{P}, q^\mathrm{P})_{0,\OmP} , \qquad a_2(p^\mathrm{P},q^\mathrm{P})  :=\frac{1}{\eta}(\kappa \nabla p^\mathrm{P},\nabla q^\mathrm{P})_{0,\OmP} ,
\label{bilinearforms}\\
b_2(p^\mathrm{P},\psi):=  \frac{\alpha}{{\lambda^\mathrm{P}}}(p^\mathrm{P},\psi^\mathrm{P})_{0,\OmP} , \qquad 
a_3(\varphi,\psi):= 
( \frac{1}{\lambda}\varphi, \psi)_{0,\Omega}, 
\qquad
G(q^\mathrm{P}) := {-}( \ell^\mathrm{P} , q^\mathrm{P})_{0,\OmP} . \nonumber
\end{gather}

\subsection{Properties of the continuous weak form and further assumptions}\label{sec:properties}
The variational forms above satisfy the continuity bounds 
\begin{gather*}
a_1(\bu, \bv)  \le  
 \| \sqrt{2\mu}\beps(\bu) \|_{0,\Omega} \| \sqrt{2\mu}\beps (\bv) \|_{0,\Omega} \lesssim \| \sqrt{2\mu} \bu \|_{1,\Omega} \| \sqrt{2\mu}\bv \|_{1,\Omega}, \\
b_1(\bv, \psi)   \le \| \vdiv \bv \|_{0,\Omega} \| \psi \|_{0,\Omega} \lesssim \| \sqrt{2\mu} \bv \|_{1,\Omega} \| \frac{1}{\sqrt{2\mu}}\psi \|_{0,\Omega}, \\
a_2(p^\mathrm{P}, q^\mathrm{P})  \le  \| \sqrt{\frac{\kappa}{ \eta}}  \nabla p^\mathrm{P}\|_{0,\OmP} \| \sqrt{\frac{\kappa}{ \eta}} \nabla q^\mathrm{P}\|_{0,\OmP} 
, \qquad a_3(\varphi, \psi)  \le  \| \frac{1}{\sqrt{\lambda}} \varphi \|_{0,\Omega} \|  \frac{1}{\sqrt{\lambda}}  \psi \|_{0,\Omega},\\
b_2(q^\mathrm{P}, \psi)   \le \| \frac{\alpha}{\sqrt{\lambda^\mathrm{P}}}  q^\mathrm{P} \|_{0,\OmP} \| \frac{1}{\sqrt{\lambda^\mathrm{P}}}  \psi^\rmP \|_{0,\OmP}, \qquad 
F(\bv)   \lesssim \| \bb \|_{0,\Omega} \| \bv \|_{1,\Omega}, \qquad  G(q^\mathrm{P})   \le  \| \ell^\rmP \|_{0,\OmP} \| q^\mathrm{P} \|_{0,\OmP}, 
\end{gather*}
for all $\bu, \bv \in \bV$, $\psi,\varphi \in \rZ$, $p^\mathrm{P}, q^\rmP \in \rQ^\rmP$.
There also holds coercivity of the diagonal bilinear forms 
\begin{equation*}
a_1(\bv, \bv)  \geq  \| \sqrt{2\mu}\beps(\bv) \|_{0,\Omega}^2 \gtrsim \| \sqrt{2\mu} \bv \|_{1,\Omega}^2, \quad 
{|a_2(q^\rmP, q^\rmP)|}  \ge   \|  \sqrt{\frac{\kappa}{ \eta}} \nabla q^\rmP \|_{0,\OmP}^2,\quad 
a_3(\psi, \psi)  \geq  \| \frac{1}{\sqrt{\lambda}}\psi \|_{0,\Omega}^2, 
\end{equation*}
for all $\bv \in \bV$, $q^\rmP \in \rQ^\rmP$, $\psi \in \rZ$, 
and the following inf-sup condition (see, e.g., \cite{ern04}): There exists   $\xi >0$ such that 
\begin{equation}\label{eq:inf-sup}
\sup_{\bv (\neq \cero) \in \bV} \frac{b_1(\bv, \psi)}{\| \bv \|_{1,\Omega}} \ge \xi \| \psi \|_{0,\Omega} \qquad  \forall \psi \in \rZ .
\end{equation}
Details on the unique solvability of the continuous problem can be found in \cite{gira11,girault15}, or, for the steady case with rotation-based formulations, in \cite{anaya20,anaya22} (but in those references the analysis assumes that the pay-zone poroelastic subdomain is completely confined by the elastic structure).

Similarly to the relevant inf-sup condition \eqref{eq:inf-sup}, we have that for each $\varphi_0\in \rL^2(\Omega)$ with $\varphi_0|_{\Omega^\mathrm{E}}=\varphi_0^{\mathrm{E}}\in\rL^2(\Omega^{\mathrm{E}})$ and $\varphi_0|_{\Omega^{\mathrm{P}}}=\varphi_0^{\mathrm{P}}\in\rL^2(\Omega^{\mathrm{P}})$,
we can find $\bv_0^{\mathrm{E}}\in\bH^1_{\Gamma_D^\rmE,0}(\Omega^\mathrm{E})$ and $\bv_0^{\mathrm{P}}\in\bH^1_{\Gamma_D^\rmP,0}(\Omega^\mathrm{P})$, where $\bH^1_{\Gamma_D^\rmE,0}(\Omega^\mathrm{E})= \{\bv : \bv\in \bH^1_{\Gamma_D^\rmE}(\Omega^\mathrm{E}) \;\mbox{and}\; \bv|_{\Sigma} = \cero \}$ and $\bH^1_{\Gamma_D^\rmP,0}(\Omega^\mathrm{P})= \{\bv : \bv\in \bH^1_{\Gamma_D^\rmP}(\Omega^\mathrm{P}) \;\mbox{and}\; \bv|_{\Sigma} = \cero \}$,
such that
\begin{align*}
&(\vdiv\bv_0^{\mathrm{E}}, \varphi^{\mathrm{E}}_0)_{0,\Omega^{\mathrm{E}}}\ge C_{\Omega^{\mathrm{E}}}/ \mu^{\mathrm{E}}\|\varphi^{\mathrm{E}}_0\|_{0,\Omega^{\mathrm{E}}}^2, \quad \sqrt{2\mu^{\mathrm{E}}}\|\bnabla \bv_0^{\mathrm{E}}\|_{0,\Omega^{\mathrm{E}}}\le 1/\sqrt{2\mu^{\mathrm{E}}}\|\varphi^{\mathrm{E}}_0\|_{0,\Omega^{\mathrm{E}}},\\
&(\vdiv\bv_0^{\mathrm{P}}, \varphi^{\mathrm{P}}_0)_{0,\Omega^{\mathrm{P}}}\ge C_{\Omega^{\mathrm{P}}}/ \mu^{\mathrm{P}}\|\varphi^{\mathrm{P}}_0\|_{0,\Omega^{\mathrm{P}}}^2, \quad \sqrt{2\mu^{\mathrm{P}}}\|\bnabla \bv_0^{\mathrm{P}}\|_{0,\Omega^{\mathrm{P}}}\le 1/\sqrt{2\mu^{\mathrm{P}}}\|\varphi^{\mathrm{P}}_0\|_{0,\Omega^{\mathrm{P}}}.
\end{align*}
Hence,  there exists $\bv_0\in\bV$ such that $\bv_0|_{\Omega^{\mathrm{E}}}=\bv_0^{\mathrm{E}}$ and $\bv_0|_{\Omega^{\mathrm{P}}}=\bv_0^{\mathrm{P}}$. Moreover we have
\[\sup_{\cero\neq \bv\in\bV}\frac{b_{1}(\bv,\varphi_0)}{\|\sqrt{2\mu}\bv\|_{1,\Omega}}\ge\tilde{C} \|\frac{1}{\sqrt{2\mu} }\varphi_0\|_{0,\Omega},\]
or, following also \cite{olshanskii06}, we can write 
\[\sup_{\cero\neq \bv\in\bV}\frac{b_{1}(\bv,\varphi_0)}{\|\sqrt{2\mu}\beps(\bv)\|_{0,\Omega}}\ge\tilde{C} \|\frac{1}{\sqrt{2\mu} }\varphi_0\|_{0,\Omega},\]
for a positive constant $\tilde{C}$ independent of $\mu$.

\section{An H(div)-conforming finite element approximation} \label{sec:FE}
 
We denote by  $\{{\mathcal T}_h^\mathrm{P}\}_h$ and $\{{\mathcal T}_h^\mathrm{E}\}_h$  sequences of triangular (or tetrahedral in 3D) partitions of the poroelastic and elastic subdomains $\Omega^\mathrm{P}$ and   $\Omega^\mathrm{E}$, respectively  having diameter $h_K$, and being such that the partitions are conforming with the interface $\Sigma$.   We   label by  $K^-$ and $K^+$ the  two elements adjacent to a facet (an edge in 2D or a face in 3D), while $h_e$  stands for the maximum diameter of the facet.  By  $\cE_h$ we will denote the set of all facets and will distinguish between facets lying on the elastic, poroelastic, and interfacial regions $\cE_h = \cE_h^\mathrm{E} \cup \cE_h^\mathrm{P} \cup \cE_h^\Sigma$. 

For a smooth vector, scalar, or tensor field $w$ defined on~$\cT_h$,  $w^\pm$  denote its traces taken from the interior of~$K^+$ and~$K^-$, respectively. We also denote by $\nn^\pm$ the outward unit normal vector to $K^\pm$. The symbols  $\mean{\cdot}$ and $\jump{\cdot }$ denote, respectively, the average and jump operators, defined as 
\begin{align}\label{eq:mean-jump}
  \mean{w} \coloneqq \frac12 (w^-+w^+), \quad  \jump{w \odot \nn } \coloneqq  (w^- \odot \nn^- + w^+ \odot \nn^+),
\end{align}
for a generic multiplication operator $\odot$, which applies to interior edges, whereas for boundary jumps and averages we adopt the conventions $\mean{w} = w$, and $\jump{w \odot \nn} = w \odot \nn$. The element-wise action of a differential operator is denoted with a subindex $h$, for example, $\nabla_h$, $\bnabla_h$ will  denote  the broken gradient operators for scalar and vector quantities, respectively and $\beps_h(\cdot) = \frac12(\bnabla_h \cdot + (\bnabla_h\cdot)^T)$ is the symmetrised vector broken gradient. 

Let $\bbP_k(K)$ denote the local space spanned by polynomials of degree up to  $k \geq 0$, and  let us consider the following discrete spaces 
\begin{equation}\label{eq:discrete-spaces0}
  \begin{split}
\bV_h &\coloneqq \bigl\{ \bv_h \in \bH(\vdiv; \Omega) : \bv_h|_K \in [\bbP_{k+1}(K)]^{d}\quad\forall K \in \cT_h, \quad\bv_h\cdot\nn|_{\Gamma_D^\mathrm{E}\cup\Gamma_D^\mathrm{P}} = 0 \bigr\},\\
\rQ^\mathrm{P}_h &\coloneqq \bigl\{ q^\mathrm{P}_h \in \rQ^\mathrm{P} : q_h|_K \in \bbP_{k+1}(K)\quad\forall K \in \cT_h^\mathrm{P} \bigr\}, \\
 \rZ_h  & \coloneqq \bigl\{ \psi_h \in \rZ: \psi_h|_K \in  \bbP_{k}(K)\quad\forall K \in \cT_h\bigr\}, 
\end{split}\end{equation}
which, in particular, satisfy  the so-called equilibrium property
\begin{equation}\label{eq:equilibrium}
  \vdiv\bV_h 
  =   \rZ_h.\end{equation}
Note that in this case $\bV_h$ is the space of divergence-conforming BDM  elements \cite{bdm85}, and it is not conforming with $\bV$.  Its basic approximation property, locally on $K \in \cT_h$, is that for all $\bv\in \bH^s(K)$, there exists an interpolant $\bv_I \in \bV_h(K)$ such that
\begin{equation}\label{eq:approx}
 \| \bv-\bv_I\|_{0,K} + h_K|\bv-\bv_I|_{1,K} + h_K^2 |\bv-\bv_I|_{2,K} \lesssim h_K^s|\bv|_{s,K}, \qquad 2 \leq s \leq k,\end{equation}
see, e.g., \cite{bdm85,buerger19,konno11}. 

\subsection{Formulation with continuous fluid pressure}
The  Galerkin finite element formulation  then reads:  Find $(\bu_h, p^\mathrm{P}_h, \varphi_h) \in \bV_h \times \rQ^\mathrm{P}_h \times \rZ_h $ such that:
\begin{subequations}\label{semidis11}
\begin{alignat}{5}
&&   a_1^h(\bu_h,\bv_h)   &&                 &\;+&\; {b}_1(\bv_h,\varphi_h)     &=&\;F(\bv_h)&\quad\forall \bv_h\in\bV_h, \label{weak-u-h-11}\\
{-}\tilde{a}_2(p_h^\mathrm{P},q_h^\mathrm{P}) &\; {-}&               &&      a_2(p_h^\mathrm{P},q_h^\mathrm{P})   &\;{+}&\;    b_2(q_h^\mathrm{P}, \, \varphi_h) &=&\;G(q_h^\mathrm{P}) &\quad\forall q_h^\mathrm{P}\in \rQ_h^\mathrm{P}, \label{weak-p-h-11}\\
&&{b}_1(\bu_h,\psi_h)  &\;+\;& b_2(p_h^\mathrm{P},\psi_h)&\;-&\; a_3(\varphi_h,\psi_h) &=&0 &\quad\forall\psi_h\in \rZ_h, \label{weak-psi-h-11} 
\end{alignat}\end{subequations}
where $a_1^h(\cdot,\cdot)$ is the discrete version of the bilinear form $a_1(\cdot,\cdot)$ and it is defined using a symmetric interior penalty from \cite{konno11} (see also \cite{kanschat10} for its use in the context of poroelasticity)
\begin{align}
 a^h_1(\bu_h, \bv_h)  
& \coloneqq 2(\mu \beps_h(\bu_h), \beps_h(\bv_h)  )_{0,\Omega} 
- 2 \sum_{e\in\cE_h{\cup \Gamma_D^*}} \bigl( \langle \mean{\mu \beps_h(\bu_h)} , \jump{\bv_h \otimes \nn} \rangle_{0,e} 
+ \langle \mean{\mu\beps_h(\bv_h)} , \jump{\bu_h \otimes \nn}\rangle_{0,e}  \bigr)  \nonumber \\
& \quad + 2\sum_{e\in\cE_h{\cup \Gamma_D^*}\setminus \cE_h^\Sigma}  \frac{\beta_{\bu}}{h_e} \langle \mu \jump{\bu_h\otimes\nn}, \jump{\bv_h\otimes\nn} \rangle_{0,e} + 2 \sum_{e\in\cE_h^\Sigma}  \frac{\beta_{\bu}}{h_e} \langle \mu^0 \jump{\bu_h\otimes\nn}, \jump{\bv_h\otimes\nn} \rangle_{0,e}, \label{a1hdef11}
\end{align} 
where $\Gamma_D^* = \Gamma_D^{\mathrm{P}}\cup \Gamma_D^{\mathrm{E}}$ denotes the part of the boundary where Dirichlet conditions are imposed on displacement, $\beta_{\bu}>0$ is a parameter penalising the (tangential) displacement  jumps (and also serving as a Nitsche parameter to enforce the tangential part of the displacement boundary condition) so that the bilinear form $a^h_1(\cdot,\cdot)$ is positive definite, and 
$\mu^0=\max\{\mu^{\mathrm{E}},\mu^{\mathrm{P}}\}$.
Now we write down the above weak formulation \eqref{semidis11} in the following compact form: Find $(\bu_h, p^\mathrm{P}_h, \varphi_h) \in \bV_h \times \rQ^\mathrm{P}_h \times \rZ_h $ such that
\begin{align*}
M_h(\bu_h, p^\mathrm{P}_h, \varphi_h; \bv_h, q^\mathrm{P}_h, \psi_h)&=F(\bv_h)+G(q_h^\mathrm{P}) \quad\forall (\bv_h, q^\mathrm{P}_h, \psi_h) \in \bV_h \times \rQ^\mathrm{P}_h \times \rZ_h,
\end{align*}
where the multilinear form is defined as 
\begin{align*}
M_h(\bu_h, p^\mathrm{P}_h, \varphi_h; \bv_h, q^\mathrm{P}_h, \psi_h)&= a_1^h(\bu_h,\bv_h)+{b}_1(\bv_h,\varphi_h)  {-}\tilde{a}_2(p_h^\mathrm{P},q_h^\mathrm{P}){-}a_2(p_h^\mathrm{P},q_h^\mathrm{P})\\
& \qquad+b_2(q_h^\mathrm{P}, \, \varphi_h)+{b}_1(\bu_h,\psi_h)+ b_2(p_h^\mathrm{P},\psi_h)-a_3(\varphi_h,\psi_h).
\end{align*}
Continuity and coercivity also hold for the modified bilinear form $a_1^h(\cdot,\cdot)$, but they do over the discrete space $\bV_h$ and with respect to the following mesh-dependent and parameter-dependent broken norms 
\begin{align}\label{eq:h-norms}
\norm{\bv_h}_{*,\cT_h}^2  &\coloneqq \sum_{K\in \cT_h} \norm{\sqrt{2\mu}\beps_h(\bv_h)}_{0,K}^2 + \sum_{e\in\cE_h\cup \Gamma_D^*\setminus \cE_h^\Sigma}2{\mu} \frac{\beta_{\bu}}{h_e} \norm{ \jump{\bv_h\otimes\nn } }_{0,e}^2 + \sum_{e\in\cE_h^\Sigma}2{\mu_0} \frac{\beta_{\bu}}{h_e} \norm{ \jump{\bv_h\otimes\nn } }_{0,e}^2, \nonumber \\ 
\norm{\bv_h}_{1,\cT_h}^2 &  \coloneqq \norm{\sqrt{2\mu} \bv_h}_{0,\Omega}^2 + \norm{\bv_h}_{*,\cT_h}^2 \  \text{for all $\bv_h \in \bH^1(\cT_h)$,}  \\
\norm{\bv_h}_{2,\cT_h}^2 &\coloneqq \norm{\bv_h}_{1,\cT_h}^2 + \sum_{K\in \cT_h}  h_K^2 |\sqrt{2\mu}\bv|_{2,K}^2 \  \text{for all $\bv_h \in \bH^2(\cT_h)$,} \nonumber
\end{align}
where, for a generic $s\geq 0$, the broken vectorial Sobolev spaces are defined as 
\[
\bH^s(\cT_h) \coloneqq \bigl\{ \bv_h \in \bL^2(\Omega) : \bv_h|_K \in \bH^s(K),\ K \in \cT_h \bigr\}. 
\]
Thanks to the discrete version of Korn's inequality \cite[eq. (1.12)]{brenner04}, the norms above are uniformly equivalent over the appropriate spaces.  
Similarly, 
for the present choice of finite element spaces, a discrete inf-sup condition for $b_1(\cdot,\cdot)$ holds naturally in the discrete norm $\norm{\cdot}_{1,\cT_h}$ when we do not have a weighting with the space-dependent parameter $\mu$  (cf. \cite{konno11}). However, combined with the arguments in Section~\ref{sec:properties}, we can assert that there exists  $\hat{\xi}>0$, independent of $h$ and of $\mu$, such that
\begin{equation*}
\sup_{\bv_h (\neq \cero) \in \bV_h} \frac{b_1(\bv_h, \psi_h)}{\| \bv_h \|_{1,\cT_h}} \ge \hat{ \xi} \| \frac{1}{\sqrt{2\mu}}\psi_h \|_{0,\Omega} \qquad \forall \psi_h \in \rZ_h. 
\end{equation*}

\subsection{Formulation with discontinuous  fluid pressure}
Consider now the spaces
\begin{equation}\label{eq:discrete-spaces}
  \begin{split}
\bV_h \coloneqq \bigl\{ \bv_h \in \bH(\vdiv; \Omega) : \bv_h|_K \in [\bbP_{k+1}(K)]^{d}\quad\forall K \in \cT_h, \quad\bv_h\cdot\nn|_{\Gamma_D^\mathrm{E}\cup\Gamma_D^\mathrm{P}} = 0 \bigr\},\\
\widetilde{\rQ}^\mathrm{P}_h \coloneqq \bigl\{ q^\mathrm{P}_h \in \rL^2(\Omega^\mathrm{P}) : q_h|_K \in \bbP_{k+1}(K)\quad\forall K \in \cT_h^\mathrm{P} \bigr\}, \qquad 
 \rZ_h   \coloneqq \bigl\{ \psi_h \in \rZ: \psi_h|_K \in  \bbP_{k}(K)\quad\forall K \in \cT_h\bigr\}.
\end{split}\end{equation}
The formulation  in this case reads:  Find $(\bu_h, p^\mathrm{P}_h, \varphi_h) \in \bV_h \times \widetilde{\rQ}^\mathrm{P}_h \times \rZ_h $ such that:
\begin{subequations}\label{semidis12}
\begin{alignat}{5}
&&   a_1^h(\bu_h,\bv_h)   &&                 &\;+&\; {b}_1(\bv_h,\varphi_h)     &=&\;F(\bv_h)&\quad\forall \bv_h\in\bV_h, \label{weak-u-h-12}\\
{-}\tilde{a}_2(p_h^\mathrm{P},q_h^\mathrm{P}) &\; {-}&               &&      a_2^{h}(p_h^\mathrm{P},q_h^\mathrm{P})   &\;{+}&\;    b_2(q_h^\mathrm{P}, \, \varphi_h) &=&\;G(q_h^\mathrm{P}) &\quad\forall q_h^\mathrm{P}\in \widetilde{\rQ}_h^\mathrm{P}, \label{weak-p-h-12}\\
&&{b}_1(\bu_h,\psi_h)  &\;+\;& b_2(p_h^\mathrm{P},\psi_h)&\;-&\; a_3(\varphi_h,\psi_h) &=&0 &\quad\forall\psi_h\in \rZ_h, \label{weak-psi-h-12} 
\end{alignat}\end{subequations}
where $a_2^{h}(\cdot,\cdot)$ is the discrete version of the bilinear form $a_2(\cdot,\cdot)$ and it is defined using a symmetric interior penalty from, e.g., the classical paper \cite{arnold82} 
\begin{align}
 a^{h}_2(p_h^\mathrm{P},q_h^\mathrm{P})  
& \coloneqq \biggl(\frac{\kappa}{\eta} \nabla_hp_h^\mathrm{P},\nabla_hq_h^\mathrm{P}\biggr)_{0,\Omega} 
-  \sum_{e\in\cE_h^{\mathrm{P}}{\cup \Gamma_D^\mathrm{P}} } \biggl( \biggl\langle \mean{\frac{\kappa}{\eta} \nabla_hp_h^\mathrm{P} } , \jump{q_h^\mathrm{P}\nn} \biggr\rangle_{0,e} 
+ \biggl\langle \mean{\frac{\kappa}{\eta}\nabla_h q_h^{\mathrm{P}}} , \jump{p_h^{\mathrm{P}} \nn}\biggr\rangle_{0,e}  \biggr)  \nonumber \\
& \quad +  \sum_{e\in\cE_h^{\mathrm{P}}{\cup \Gamma_D^\mathrm{P}}}  \frac{\beta_{p^\mathrm{P}}}{h_e} \biggl\langle \frac{\kappa}{\eta} \jump{p_h^{\mathrm{P}} \nn }, \jump{q_h^{\mathrm{P}} \nn} \biggr\rangle_{0,e}, \label{a1hdef12}
\end{align} 
where $\beta_{p^\mathrm{P}}>0$ is a parameter penalising the pressure  jumps. If $\beta_{p^\mathrm{P}}$ is sufficiently large, this yields the coercivity of the bilinear form $ a^{h}_2$. 

Next, and as done for the case of continuous pressure approximation, we write down the compact form of the above weak formulation (\ref{semidis12}): Find $(\bu_h, p^\mathrm{P}_h, \varphi_h) \in \bV_h \times \widetilde{\rQ}^\mathrm{P}_h \times \rZ_h $ such that
\begin{align*}
\widetilde{M}_h(\bu_h, p^\mathrm{P}_h, \varphi_h; \bv_h, q^\mathrm{P}_h, \psi_h)&=F(\bv_h)+G(q_h^\mathrm{P}) \quad\forall (\bv_h, q^\mathrm{P}_h, \psi_h) \in \bV_h \times \widetilde{\rQ}^\mathrm{P}_h \times \rZ_h,
\end{align*}
where
\begin{align*}
\widetilde{M}_h(\bu_h, p^\mathrm{P}_h, \varphi_h; \bv_h, q^\mathrm{P}_h, \psi_h)&= a_1^h(\bu_h,\bv_h)+{b}_1(\bv_h,\varphi_h)  {-}\tilde{a}_2(p_h^\mathrm{P},q_h^\mathrm{P}){-}a_2^h(p_h^\mathrm{P},q_h^\mathrm{P})\\ & \qquad +b_2(q_h^\mathrm{P}, \, \varphi_h)+{b}_1(\bu_h,\psi_h) + b_2(p_h^\mathrm{P},\psi_h)-a_3(\varphi_h,\psi_h).
\end{align*}

\section{Unique solvability of the discrete problems and \textit{a priori} error estimates} \label{sec:apriori}
\subsection{Well-posedness analysis for formulation (\ref{semidis11})}
We proceed by means of a Fortin argument and consider the canonical interpolation operator $\Pi_h: \bV\rightarrow \bV_h$ such that 
\begin{subequations}\begin{align}
b_1(\bv-\Pi_h\bv,\varphi_h) & =0\qquad\forall \varphi_h\in \rZ_h,\label{property1}\\
|\bv-\Pi_h\bv|_{s,K} & \le Ch^{t-s}|\bv|_{t,K}\quad\forall K\in\mathcal{T}_h,\label{property2}
\end{align}\end{subequations}
where $C$ is a positive constant which depends only on the shape of $K$ and $1\le t\le r+1$ (see, e.g., \cite{boffi13}).

Using the trace inequality and property \eqref{property2}, we have the following bound in one of the norms from \eqref{eq:h-norms} 
\[\|\bv-\Pi_h\bv\|_{\ast,\cT_h}\le \|\bv\|_{\ast,\cT_h}\quad\forall \bv\in \bV.\]
Moreover, we have
\[\|\Pi_h \bv\|_{\ast,\cT_h}\le C\|\bv\|_{\ast,\cT_h}.\]

With these properties satisfied by the operator $\Pi_h$,  we can use the continuous inf-sup condition to readily show that there exists $\xi>0$ such that 
a discrete inf-sup condition for the bilinear form $b_1$: 
\begin{align*}
\sup_{\bv\in\bV_h\setminus\{\cero\}}\frac{b_1(\bv,\varphi_h)}{\|\bv\|_{\ast,\cT_h}}\ge \sup_{\Pi_h\bv\in\bV_h\setminus\{\cero\}}\frac{b_1(\Pi_h\bv,\varphi_h)}{\|\Pi_h\bv\|_{\ast,\cT_h}}=\sup_{\bv\in\bV\setminus\{\cero\}}\frac{b_1(\bv,\varphi_h)}{\|\Pi_h\bv\|_{\ast,\cT_h}}\ge \frac{1}{C}\sup_{\bv\in\bV\setminus\{\cero\}}\frac{b_1(\bv,\varphi_h)}{\|\bv\|_{\ast,\cT_h}}\ge \xi \|\frac{1}{\sqrt{2\mu}}\varphi_h\|_{0,\Omega}\quad\forall \varphi_h\in \rZ_h.
\end{align*}

We are now in a position to establish a global inf-sup condition.  
\begin{theorem}\label{new11}
For every $(\bu_h, p^\mathrm{P}_h, \varphi_h) \in \bV_h \times \rQ^\mathrm{P}_h \times \rZ_h$, there exists $(\bv_h, q^\mathrm{P}_h, \psi_h) \in \bV_h \times \rQ^\mathrm{P}_h \times \rZ_h$ with $\VERT(\bv_h, q^\mathrm{P}_h, \psi_h)\VERT\lesssim 
\VERT(\bu_h, p^\mathrm{P}_h, \varphi_h)\VERT$ such that
\[
M_h(\bu_h, p^\mathrm{P}_h, \varphi_h; \bv_h, q^\mathrm{P}_h, \psi_h)\gtrsim 
\VERT(\bu_h, p^\mathrm{P}_h, \varphi_h)\VERT^2,
\]
where 
\begin{equation}\label{eq:triplenorm}
\VERT(\bv_h, q^\mathrm{P}_h, \psi_h)\VERT^2:=\norm{\bv_h}_{*,\cT_h}^2 + \|\frac{1}{\sqrt{2\mu}}\psi_h\|^2_{0,\Omega}+\frac{1}{\lambda^{\mathrm{E}}}\|\psi_h\|^2_{0,\Omega^{\mathrm{E}}}+\frac{1}{\lambda^{\mathrm{P}}}\|\psi_h-\alpha q^\mathrm{P}_h\|^2_{0,\Omega^{\mathrm{P}}}+c_0\|q^\mathrm{P}_h\|^2_{0,\Omega^{\mathrm{P}}}+\|\frac{\kappa}{\eta} \nabla q^\mathrm{P}_h\|^2_{0,\Omega^{\mathrm{P}}}.\end{equation}

Moreover, we have that
\[\bigl|M_h(\bu_h, p^\mathrm{P}_h, \varphi_h; \bv_h, q^\mathrm{P}_h, \psi_h)\bigr|\lesssim 
\VERT(\bu_h, p^\mathrm{P}_h, \varphi_h)\VERT\VERT(\bv_h, q^\mathrm{P}_h, \psi_h)\VERT,\]
for all $(\bu_h, p^\mathrm{P}_h, \varphi_h), (\bv_h, q^\mathrm{P}_h, \psi_h) \in \bV_h \times \rQ^\mathrm{P}_h \times \rZ_h$.
\end{theorem}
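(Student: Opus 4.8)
The plan is to prove the two assertions separately: the coercivity-type inf-sup bound by the classical two-step Fortin argument, and the boundedness by a term-by-term estimate after a convenient regrouping of the zero-order pressure contributions.

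For the inf-sup bound I would begin with the \emph{diagonal} test triple $(\bv_h,q_h^\mathrm{P},\psi_h)=(\bu_h,-p_h^\mathrm{P},-\varphi_h)$. Since $M_h(\bu_h,p_h^\mathrm{P},\varphi_h;\cdot,\cdot,\cdot)$ is linear in the test triple, the two copies of $b_1(\bu_h,\cdot)$ cancel, the coercive term $a_1^h(\bu_h,\bu_h)$ (coercive in $\|\cdot\|_{*,\cT_h}$ since $\beta_{\bu}$ is taken large enough, by the discrete Korn inequality) and the nonnegative term $a_2(p_h^\mathrm{P},p_h^\mathrm{P})=\|\sqrt{\kappa/\eta}\,\nabla p_h^\mathrm{P}\|_{0,\OmP}^2$ survive, and the remaining zero-order contributions recombine through the total-pressure identity
\[
\tilde{a}_2(p_h^\mathrm{P},p_h^\mathrm{P})-2\,b_2(p_h^\mathrm{P},\varphi_h)+a_3(\varphi_h,\varphi_h)=c_0\|p_h^\mathrm{P}\|_{0,\OmP}^2+\tfrac{1}{\lambda^\mathrm{P}}\|\alpha p_h^\mathrm{P}-\varphi_h\|_{0,\OmP}^2+\tfrac{1}{\lambda^\mathrm{E}}\|\varphi_h\|_{0,\OmE}^2 .
\]
Hence this choice already controls every term of $\VERT(\bu_h,p_h^\mathrm{P},\varphi_h)\VERT^2$ \emph{except} the weighted pressure term $\|\tfrac{1}{\sqrt{2\mu}}\varphi_h\|_{0,\Omega}^2$. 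To recover the latter I would invoke the discrete inf-sup condition for $b_1$ proven just before the statement (in the $\mu$-weighted norms, with constant $\hat\xi>0$ independent of $h$ and $\mu$): choose $\bw_h\in\bV_h$ with $\|\bw_h\|_{*,\cT_h}\lesssim\|\tfrac{1}{\sqrt{2\mu}}\varphi_h\|_{0,\Omega}$ and $-b_1(\bw_h,\varphi_h)\gtrsim\|\tfrac{1}{\sqrt{2\mu}}\varphi_h\|_{0,\Omega}^2$. Then take $(\bv_h,q_h^\mathrm{P},\psi_h)=(\bu_h-\delta\bw_h,-p_h^\mathrm{P},-\varphi_h)$ for a small $\delta>0$; by linearity this adds $-\delta\,a_1^h(\bu_h,\bw_h)-\delta\,b_1(\bw_h,\varphi_h)$ to the previous lower bound, the indefinite term $-\delta\,a_1^h(\bu_h,\bw_h)$ being absorbed by Young's inequality into half of the coercive $\|\bu_h\|_{*,\cT_h}^2$ contribution plus a small multiple of $\|\tfrac{1}{\sqrt{2\mu}}\varphi_h\|_{0,\Omega}^2$. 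Fixing $\delta$ sufficiently small (depending only on the coercivity and continuity constants of $a_1^h$ and on $\hat\xi$, hence mesh- and parameter-independent) gives $M_h(\bu_h,p_h^\mathrm{P},\varphi_h;\bv_h,q_h^\mathrm{P},\psi_h)\gtrsim\VERT(\bu_h,p_h^\mathrm{P},\varphi_h)\VERT^2$, while $\VERT(\bv_h,q_h^\mathrm{P},\psi_h)\VERT\lesssim\VERT(\bu_h,p_h^\mathrm{P},\varphi_h)\VERT$ follows immediately from $\|\bu_h-\delta\bw_h\|_{*,\cT_h}\le\|\bu_h\|_{*,\cT_h}+\delta\|\bw_h\|_{*,\cT_h}$, the components $-p_h^\mathrm{P}$ and $-\varphi_h$ reproducing exactly the remaining pieces of the triple norm.

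For the boundedness bound I would estimate $M_h$ term by term. The interior-penalty form $a_1^h$ is continuous on $\bV_h\times\bV_h$ with respect to $\|\cdot\|_{*,\cT_h}$ by the standard discrete trace inequality applied to its consistency terms, the weights $\mu$ and $\mu^0=\max\{\mu^\mathrm{E},\mu^\mathrm{P}\}$ being precisely those entering the penalty part of $\|\cdot\|_{*,\cT_h}$, which is what keeps this bound robust across the interface; $|b_1(\bv_h,\psi_h)|\le\|\vdiv\bv_h\|_{0,\Omega}\|\psi_h\|_{0,\Omega}\lesssim\|\bv_h\|_{*,\cT_h}\|\tfrac{1}{\sqrt{2\mu}}\psi_h\|_{0,\Omega}$ because $\mu$ is piecewise constant so the weight factors out on each subdomain; $|a_2(p_h^\mathrm{P},q_h^\mathrm{P})|\le\|\sqrt{\kappa/\eta}\,\nabla p_h^\mathrm{P}\|_{0,\OmP}\|\sqrt{\kappa/\eta}\,\nabla q_h^\mathrm{P}\|_{0,\OmP}$; and the zero-order fluid/total-pressure block recombines, as in the diagonal step, into
\begin{align*}
&-\tilde{a}_2(p_h^\mathrm{P},q_h^\mathrm{P})+b_2(q_h^\mathrm{P},\varphi_h)+b_2(p_h^\mathrm{P},\psi_h)-a_3(\varphi_h,\psi_h)\\
&\qquad=-c_0(p_h^\mathrm{P},q_h^\mathrm{P})_{0,\OmP}-\tfrac{1}{\lambda^\mathrm{P}}(\alpha p_h^\mathrm{P}-\varphi_h,\alpha q_h^\mathrm{P}-\psi_h)_{0,\OmP}-\tfrac{1}{\lambda^\mathrm{E}}(\varphi_h,\psi_h)_{0,\OmE},
\end{align*}
each summand of which is bounded in absolute value by the matching pair of terms in $\VERT(\bu_h,p_h^\mathrm{P},\varphi_h)\VERT\,\VERT(\bv_h,q_h^\mathrm{P},\psi_h)\VERT$ via Cauchy--Schwarz; summing all the estimates gives the continuity bound.

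The hard part is the recovery of $\|\tfrac{1}{\sqrt{2\mu}}\varphi_h\|_{0,\Omega}$ in the first estimate: the energy pairing provides no control at all of the total/Herrmann pressure in this norm, so one must rely on the inf-sup property of $b_1$ and --- crucially for the claimed parameter robustness --- on the fact that its inf-sup constant is independent of $\mu$ (established via the high-contrast argument recalled in Section~\ref{sec:properties}, following \cite{olshanskii06}), so that $\delta$, and hence the final inf-sup constant of $M_h$, does not degenerate as the Lam\'e parameters, permeability, or storativity vary. A minor but necessary point in the boundedness bound is the regrouping of the $b_2$ and $a_3$ terms displayed above, since a direct Cauchy--Schwarz on $\tfrac{\alpha}{\lambda^\mathrm{P}}(q_h^\mathrm{P},\varphi_h)_{0,\OmP}$ would not be controlled by the triple norm uniformly in the parameters.
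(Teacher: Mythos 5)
Your argument is correct and follows essentially the same two-step Fortin-style route as the paper: a diagonal test choice $(\bu_h,-p_h^\mathrm{P},-\varphi_h)$ that controls every piece of the triple norm except $\|\tfrac{1}{\sqrt{2\mu}}\varphi_h\|_{0,\Omega}$, followed by a perturbation along the discrete inf-sup supremizer for $b_1$, with Young's inequality used to absorb the cross term and a parameter-independent $\delta$; the sign difference ($\bu_h-\delta\bw_h$ versus $\bu_h+\delta_1\bv_h$) is just a relabelling of the supremizer. One place where you add genuinely useful detail that the paper elides is the continuity bound: the paper simply says ``apply Cauchy--Schwarz,'' whereas you make explicit the necessary regrouping
\[
-\tilde{a}_2(p_h^\mathrm{P},q_h^\mathrm{P})+b_2(q_h^\mathrm{P},\varphi_h)+b_2(p_h^\mathrm{P},\psi_h)-a_3(\varphi_h,\psi_h)=-c_0(p_h^\mathrm{P},q_h^\mathrm{P})_{0,\OmP}-\tfrac{1}{\lambda^\mathrm{P}}(\alpha p_h^\mathrm{P}-\varphi_h,\alpha q_h^\mathrm{P}-\psi_h)_{0,\OmP}-\tfrac{1}{\lambda^\mathrm{E}}(\varphi_h,\psi_h)_{0,\OmE},
\]
which is indeed required to match the combined term $\tfrac{1}{\lambda^\mathrm{P}}\|\psi_h-\alpha q_h^\mathrm{P}\|^2$ appearing in \eqref{eq:triplenorm}, rather than $\tfrac{1}{\lambda^\mathrm{P}}\|\psi_h\|^2$ and $\tfrac{\alpha^2}{\lambda^\mathrm{P}}\|q_h^\mathrm{P}\|^2$ separately.
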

\begin{proof}
Let $(\bu_h, p^\mathrm{P}_h, \varphi_h)\in \bV_h \times \rQ^\mathrm{P}_h \times \rZ_h$ be arbitrary. Using the definition of the multilinear form $M_h$, we easily obtain  
\begin{align*}
&M_{h}(\bu_h, p^\mathrm{P}_h, \varphi_h; \bv_h, 0, 0)=a_1^h(\bu_h,\bv_h)+{b}_1(\bv_h,\varphi_h)
 \ge \left(\xi-\frac{1}{2\epsilon_1}\right)\|\frac{1}{\sqrt{2\mu}}\varphi_h\|_{0,\Omega}^{2}-\frac{\epsilon_1}{2}\|\bu_h\|_{\ast,\cT_h}.
\end{align*}
Selecting  $\bv=\bu_h$, $q^{\mathrm{P}}=-p_h^{\mathrm{P}}$ and $\psi=-\varphi_h$ we have
\begin{align*}
M_h(\bu_h, p^\mathrm{P}_h, \varphi_h,\bu_h, {-}p^\mathrm{P}_h, -\varphi_h)&= a_1^h(\bu_h,\bu_h)+{\tilde{a}_2(p_h^\mathrm{P},p_h^\mathrm{P})}+a_2(p_h^\mathrm{P},p_h^\mathrm{P})-2b_2(p_h^\mathrm{P}, \, \varphi_h)+a_3(\varphi_h,\varphi_h)\nonumber\\
&\ge C_2\|\bu_h\|_{\ast,\cT_h}^2+c_0\|p_h^\mathrm{P}\|_{0,\Omega^{\mathrm{P}}}^2+1/\lambda^{\mathrm{E}}\|\varphi_h\|^2_{0,\Omega^{\mathrm{E}}}+1/\lambda^{\mathrm{P}}\|\varphi_h-\alpha p^\mathrm{P}_h\|^2_{0,\Omega^{\mathrm{P}}}\\
&\qquad+c_0\|p^\mathrm{P}_h\|^2_{0,\Omega^{\mathrm{P}}}+\|\kappa/\eta \nabla p^\mathrm{P}_h\|^2_{0,\Omega^{\mathrm{P}}}.
\end{align*}
Then we can make the choice $\bv=\bu_h+\delta_1 \bv_h$, $q^{\mathrm{P}}=-p_h^{\mathrm{P}}$ and $\psi=-\varphi_h$, leading to 
\begin{align*}
&M_h(\bu_h, p^\mathrm{P}_h, \varphi_h,\bu_h+\delta_1\bv_h, -p^\mathrm{P}_h, -\varphi_h)\\
&=M_h(\bu_h, p^\mathrm{P}_h, \varphi_h,\bu_h, -p^\mathrm{P}_h, -\varphi_h)+\delta_1M_h(\bu_h, p^\mathrm{P}_h, \varphi_h,\bv_h, 0, 0)\\
&\ge C_2\|\bu_h\|_{\ast,\cT_h}^2+c_0\|p_h^\mathrm{P}\|_{0,\Omega^{\mathrm{P}}}^2+1/\lambda^{\mathrm{E}}\|\varphi_h\|^2_{0,\Omega^{\mathrm{E}}}+1/\lambda^{\mathrm{P}}\|\varphi_h-\alpha p^\mathrm{P}_h\|^2_{0,\Omega^{\mathrm{P}}}\\
&\qquad+c_0\|p^\mathrm{P}_h\|^2_{0,\Omega^{\mathrm{P}}}+\|\kappa/\eta (\nabla p^\mathrm{P}_h)\|^2_{0,\Omega^{\mathrm{P}}}+\delta_1\left(\xi-\frac{1}{2\epsilon_1}\right)\|\frac{1}{\sqrt{2\mu}}\varphi_h\|_{0,\Omega}^{2}-\delta_1\frac{\epsilon_1}{2}\|\bu_h\|_{\ast,\cT_h}\\
&\ge \left(C_2-\delta_1\frac{\epsilon_1}{2}\right)\|\bu_h\|_{\ast,\cT_h}^2+c_0\|p_h^\mathrm{P}\|_{0,\Omega^{\mathrm{P}}}^2+1/\lambda^{\mathrm{E}}\|\varphi_h\|^2_{0,\Omega^{\mathrm{E}}}+1/\lambda^{\mathrm{P}}\|\varphi_h-\alpha p^\mathrm{P}_h\|^2_{0,\Omega^{\mathrm{P}}}\\
&\qquad+c_0\|p^\mathrm{P}_h\|^2_{0,\Omega^{\mathrm{P}}}+\|\kappa/\eta (\nabla p^\mathrm{P}_h)\|^2_{0,\Omega^{\mathrm{P}}}+\delta_1\left(\xi-\frac{1}{2\epsilon_1}\right)\|\frac{1}{\sqrt{2\mu}}\varphi_h\|_{0,\Omega}^{2},
\end{align*}
where we have used Young's inequality. Assuming the values $\epsilon_1=1/\xi$ and $\delta_1 = C_2/\epsilon_1$, we then have
\[M_h(\bu_h, p^\mathrm{P}_h, \varphi_h; \bv_h, q^\mathrm{P}_h, \psi_h)\ge  \frac{1}{2}\min\left\{C_2\xi^2,C_2\right\} \VERT(\bu_h, p^\mathrm{P}_h, \varphi_h)\VERT^2,\]
and  the first part of the proof concludes after realising that 
\[
\VERT(\bv_h, q^\mathrm{P}_h, \psi_h)\VERT^2= \VERT(\bu_h+\delta_1\bv, -p^\mathrm{P}_h, -\varphi_h)\VERT^2\le 2 \VERT(\bu_h, p^\mathrm{P}_h, \varphi_h)\VERT^2.
\]
For the continuity property, it suffices to apply 
Cauchy--Schwarz inequality and the definition of $M_h$.
\end{proof}

\begin{lemma}\label{lem:aux04}
Let $(\tilde{\bu},\tilde{p}^\mathrm{P},\tilde{\varphi})$ be a generic triplet in $\bV_h \times \rQ^\mathrm{P}_h \times \rZ_h$. Then the following estimate holds
  \[\VERT(\bu-\bu_h, p-p^\mathrm{P}_h, \varphi-\varphi_h)\VERT\lesssim \VERT(\bu-\tilde{\bu}, p^{\mathrm{P}}-\tilde{p}^\mathrm{P}, \varphi-\tilde{\varphi})\VERT+\biggl( \sum_{K\in \cT_h}  h_K^2 |\sqrt{2\mu}(\bu-\tilde{\bu})|_{2,K}^2\biggr)^{1/2}.\]
\end{lemma}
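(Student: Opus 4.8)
The plan is to run the standard Strang/C\'ea estimate for a consistent nonconforming scheme. Splitting $\bu-\bu_h=(\bu-\tilde\bu)+(\tilde\bu-\bu_h)$ and likewise for the fluid- and total-pressure components, the triangle inequality for $\VERT\cdot\VERT$ reduces the claim to bounding the \emph{discrete} error $\VERT(\tilde\bu-\bu_h,\tilde p^\mathrm{P}-p^\mathrm{P}_h,\tilde\varphi-\varphi_h)\VERT$. By Theorem~\ref{new11} there is a tuple $(\bv_h,q^\mathrm{P}_h,\psi_h)\in\bV_h\times\rQ^\mathrm{P}_h\times\rZ_h$ with $\VERT(\bv_h,q^\mathrm{P}_h,\psi_h)\VERT\lesssim\VERT(\tilde\bu-\bu_h,\tilde p^\mathrm{P}-p^\mathrm{P}_h,\tilde\varphi-\varphi_h)\VERT$ and
\[
\VERT(\tilde\bu-\bu_h,\tilde p^\mathrm{P}-p^\mathrm{P}_h,\tilde\varphi-\varphi_h)\VERT^2\ \lesssim\ M_h(\tilde\bu-\bu_h,\tilde p^\mathrm{P}-p^\mathrm{P}_h,\tilde\varphi-\varphi_h;\,\bv_h,q^\mathrm{P}_h,\psi_h).
\]

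Next I would invoke consistency of \eqref{semidis11}. Since the exact $\bu$ is single-valued across facets and vanishes on $\Gamma_D^*$, every jump and penalty term carrying $\bu$ in $a_1^h$ drops out, and elementwise integration by parts turns the remaining facet integrals of $a_1^h$, together with $b_1$, back into the strong momentum balances \eqref{eq:poro-momentum}, \eqref{eq:Elast} and the Neumann/transmission conditions (the pressure equation being conforming here); hence $M_h(\bu,p^\mathrm{P},\varphi;\,\bv_h,q^\mathrm{P}_h,\psi_h)=F(\bv_h)+G(q^\mathrm{P}_h)=M_h(\bu_h,p^\mathrm{P}_h,\varphi_h;\,\bv_h,q^\mathrm{P}_h,\psi_h)$, i.e.\ Galerkin orthogonality holds on the discrete space. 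Subtracting,
\[
\VERT(\tilde\bu-\bu_h,\tilde p^\mathrm{P}-p^\mathrm{P}_h,\tilde\varphi-\varphi_h)\VERT^2\ \lesssim\ M_h(\tilde\bu-\bu,\tilde p^\mathrm{P}-p^\mathrm{P},\tilde\varphi-\varphi;\,\bv_h,q^\mathrm{P}_h,\psi_h).
\]

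It then remains to bound this last multilinear expression by $\bigl(\VERT(\bu-\tilde\bu,p^\mathrm{P}-\tilde p^\mathrm{P},\varphi-\tilde\varphi)\VERT+(\sum_{K}h_K^2|\sqrt{2\mu}(\bu-\tilde\bu)|_{2,K}^2)^{1/2}\bigr)\VERT(\bv_h,q^\mathrm{P}_h,\psi_h)\VERT$. The contributions of $b_1$, $b_2$, $\tilde a_2$, $a_2$, $a_3$ and of the volume and penalty parts of $a_1^h$ are handled by Cauchy--Schwarz exactly as in the continuity half of Theorem~\ref{new11}, using that $\mu,\lambda$ are piecewise constant (so the weights $\sqrt{2\mu}$, $\lambda^{-1/2}$ commute with $\beps_h$, $\vdiv$ and $\operatorname{tr}$) and that the penalty part of $\norm{\tilde\bu-\bu}_{*,\cT_h}$ controls $\sqrt{2\mu\beta_{\bu}/h_e}\,\norm{\jump{(\tilde\bu-\bu)\otimes\nn}}_{0,e}$ facet by facet (with $\mu_0$ in place of $\mu$ on $\Sigma$). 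The only genuinely new terms are the two consistency facet integrals $\langle\mean{\mu\beps_h(\tilde\bu-\bu)},\jump{\bv_h\otimes\nn}\rangle_{0,e}$ and $\langle\mean{\mu\beps_h(\bv_h)},\jump{(\tilde\bu-\bu)\otimes\nn}\rangle_{0,e}$. In the first, $\mean{\mu\beps_h(\tilde\bu-\bu)}$ is not polynomial, so I would use the scaled trace inequality $h_K\norm{\beps_h(\bw)}_{0,\partial K}^2\lesssim\norm{\beps_h(\bw)}_{0,K}^2+h_K^2|\bw|_{2,K}^2$ with $\bw=\tilde\bu-\bu$ --- which is exactly what generates the extra summand $\sum_{K}h_K^2|\sqrt{2\mu}(\bu-\tilde\bu)|_{2,K}^2$ --- and pair it against the penalty part of $\norm{\bv_h}_{*,\cT_h}$, the $\mu$-powers cancelling between the two factors. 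In the second, $\beps_h(\bv_h)$ \emph{is} polynomial, so the discrete inverse trace inequality controls $h_e^{1/2}\norm{\mean{\mu\beps_h(\bv_h)}}_{0,e}$ by the volume part of $\norm{\bv_h}_{*,\cT_h}$, while the factor $h_e^{-1/2}\norm{\jump{(\tilde\bu-\bu)\otimes\nn}}_{0,e}$ is absorbed, with its weight, into the penalty part of $\norm{\tilde\bu-\bu}_{*,\cT_h}$ --- again the $\mu$-powers cancelling. Collecting these estimates gives the displayed bound; then $\VERT(\bv_h,q^\mathrm{P}_h,\psi_h)\VERT\lesssim\VERT(\tilde\bu-\bu_h,\ldots)\VERT$, cancellation of one factor, and a final use of the triangle inequality conclude.

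The step I expect to be the main obstacle is the parameter-robust bookkeeping of the last paragraph: the $\mu$-weights must cancel exactly in the facet estimates, leaving no ratio of Lam\'e coefficients. This is most delicate on the interface facets $e\in\cE_h^\Sigma$, where the penalty uses $\mu_0=\max\{\mu^\mathrm{E},\mu^\mathrm{P}\}$ while $\mean{\mu\beps_h(\cdot)}$ mixes $\mu^\mathrm{E}$ and $\mu^\mathrm{P}$, so one must exploit $\mu|_K\le\mu_0$ on the two elements adjacent to $\Sigma$ for the interface penalty term of $\norm{\cdot}_{*,\cT_h}$ to absorb the jump factor with a $\mu$-independent constant. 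A secondary, implicit requirement is that the exact solution have enough broken $\bH^2$ regularity for the integration by parts in the consistency step to be valid and for $|\sqrt{2\mu}(\bu-\tilde\bu)|_{2,K}$ to be finite.
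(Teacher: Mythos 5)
Your argument is correct and follows the same route the paper takes: triangle inequality, then the discrete inf-sup from Theorem~\ref{new11}, then Galerkin orthogonality (consistency of $M_h$ at the exact solution) to replace $\tilde\bu-\bu_h$ by $\tilde\bu-\bu$ in the first slot, and finally a continuity estimate for $M_h$ in which the nonconforming displacement argument is measured in the $\norm{\cdot}_{2,\cT_h}$ norm (hence the extra $\bigl(\sum_K h_K^2|\sqrt{2\mu}(\bu-\tilde\bu)|_{2,K}^2\bigr)^{1/2}$ term) to absorb the two non-symmetric consistency facet integrals of $a_1^h$ via the scaled/inverse trace inequalities. The paper states this as a bare chain of inequalities without comment; your version simply makes explicit the three ingredients it implicitly uses — the consistency identity $M_h(\bu,p^\mathrm{P},\varphi;\cdot)=F+G$, the trace-inequality origin of the $h_K^2|\cdot|_{2,K}$ correction, and the cancellation of $\mu$-weights (with $\mu|_K\le\mu_0$ on $\cE_h^\Sigma$) — all of which are correctly identified.
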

\begin{proof}
Directly from triangle inequality we have 
\[\VERT(\bu-\bu_h, p-p^\mathrm{P}_h, \varphi-\varphi_h)\VERT\le\VERT(\bu-\tilde{\bu}, p^{\mathrm{P}}-\tilde{p}^\mathrm{P}, \varphi-\tilde{\varphi})\VERT+\VERT(\tilde{\bu}-\bu_h, \tilde{p}^{\mathrm{P}}-p^\mathrm{P}_h, \tilde{\varphi}-\varphi_h)\VERT.\]
Using Theorem \ref{new11} and the properties of $M_h$ gives
\begin{align*}
\VERT(\tilde{\bu}-\bu_h, \tilde{p}^{\mathrm{P}}-p^\mathrm{P}_h, \tilde{\varphi}-\varphi_h)\VERT^2&\lesssim M_h(\tilde{\bu}-\bu_h, \tilde{p}^{\mathrm{P}}-p^\mathrm{P}_h, \tilde{\varphi}-\varphi_h; \bv_h, q^\mathrm{P}_h, \psi_h) \\
&\le M_h(\tilde{\bu}, \tilde{p}^{\mathrm{P}}, \tilde{\varphi}; \bv_h, q^\mathrm{P}_h, \psi_h)-M_h(\bu_h, p^\mathrm{P}_h, \varphi_h; \bv_h, q^\mathrm{P}_h, \psi_h)\\
&\le M_h(\tilde{\bu}-\bu, \tilde{p}^{\mathrm{P}}-p^\mathrm{P}, \tilde{\varphi}-\varphi; \bv_h, q^\mathrm{P}_h, \psi_h)\\
&\lesssim \VERT(\bu-\tilde{\bu}, p^{\mathrm{P}}-\tilde{p}^\mathrm{P}, \varphi-\tilde{\varphi})\VERT\\
&\qquad \qquad +\biggl( \sum_{K\in \cT_h} h_K^2 |\sqrt{2\mu}(\bu-\tilde{\bu})|_{2,K}^2\biggr)^{1/2}\VERT(\tilde{\bu}-\bu_h, \tilde{p}^{\mathrm{P}}-p^\mathrm{P}_h, \tilde{\varphi}-\varphi_h)\VERT.
\end{align*}
\end{proof}

\subsection{Well-posedness analysis for formulation (\ref{semidis12})}
Proceeding similarly to the proof of Theorem \ref{new11}, we can establish the following result. 
\begin{theorem}\label{new12}
For every $(\bu_h, p^\mathrm{P}_h, \varphi_h) \in \bV_h \times \widetilde{\rQ}^\mathrm{P}_h \times {Z}_h$, there exists $(\bv_h, q^\mathrm{P}_h, \psi_h) \in \bV_h \times \widetilde{\rQ}^\mathrm{P}_h \times {Z}_h$ with $\VERT(\bv_h, q^\mathrm{P}_h, \psi_h)\VERT\lesssim\VERT(\bu_h, p^\mathrm{P}_h, \varphi_h)\VERT$ such that
\[
\widetilde{M}_h(\bu_h, p^\mathrm{P}_h, \varphi_h; \bv_h, q^\mathrm{P}_h, \psi_h)\gtrsim \VERT(\bu_h, p^\mathrm{P}_h, \varphi_h)\VERT^2_{\ast},
\]
where 
\[\VERT(\bv_h, q^\mathrm{P}_h, \psi_h)\VERT_{\ast}^2:=\norm{\bv_h}_{*,\cT_h}^2 + \|\frac{1}{\sqrt{2\mu}}\psi_h\|^2_{0,\Omega}+\frac{1}{\lambda^{\mathrm{E}}}\|\psi_h\|^2_{0,\Omega^{\mathrm{E}}}+\frac{1}{\lambda^{\mathrm{P}}}\|\psi_h-\alpha q^\mathrm{P}_h\|^2_{0,\Omega^{\mathrm{P}}}+c_0\|q^\mathrm{P}_h\|^2_{0,\Omega^{\mathrm{P}}}+\|q^\mathrm{P}_h\|^2_{\ast,\Omega^{\mathrm{P}}},\]
and
\[\norm{q^\mathrm{P}_h}_{\ast,\Omega^{\mathrm{P}}}^2  \coloneqq \sum_{K\in \cT_h^{\mathrm{P}}} \norm{\kappa/\eta(\nabla q^\mathrm{P}_h)}_{0,K}^2 + \sum_{e\in\cE_h^{\mathrm{P}}}\frac{\beta_{p^\mathrm{P}}}{h_e} \norm{ \jump{\kappa/\eta q^\mathrm{P}_h \nn} }_{0,e}^2.\]
In addition, we have
\[|\widetilde{M}_h(\bu_h, p^\mathrm{P}_h, \varphi_h; \bv_h, q^\mathrm{P}_h, \psi_h)|\lesssim 
\VERT(\bu_h, p^\mathrm{P}_h, \varphi_h)\VERT_\ast\VERT(\bv_h, q^\mathrm{P}_h, \psi_h)\VERT_\ast.\]
\end{theorem}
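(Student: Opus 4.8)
The plan is to mirror the proof of Theorem~\ref{new11}, adapting only the pieces that change because the fluid pressure space $\widetilde{\rQ}^{\mathrm P}_h$ is now discontinuous and the bilinear form $a_2$ has been replaced by its SIP version $a_2^h$. First I would record the analogue of the discrete inf-sup condition for $b_1$: since $\bV_h$, $\rZ_h$, and the form $b_1$ are unchanged, the Fortin-operator argument given just before Theorem~\ref{new11} applies verbatim, yielding $\xi>0$, independent of $h$ and $\mu$, with
\[
\sup_{\bv\in\bV_h\setminus\{\cero\}}\frac{b_1(\bv,\varphi_h)}{\|\bv\|_{*,\cT_h}}\ge \xi\,\Bigl\|\tfrac{1}{\sqrt{2\mu}}\varphi_h\Bigr\|_{0,\Omega}\qquad\forall\varphi_h\in\rZ_h.
\]
I would also note that coercivity of $a_1^h$ on $\bV_h$ (w.r.t.\ $\norm{\cdot}_{*,\cT_h}$) and coercivity of $a_2^h$ on $\widetilde{\rQ}^{\mathrm P}_h$ (w.r.t.\ $\norm{\cdot}_{*,\Omega^{\mathrm P}}$), the latter valid for $\beta_{p^{\mathrm P}}$ sufficiently large as stated after \eqref{a1hdef12}, are the two ingredients that let the diagonal terms reproduce the target norm $\VERT\cdot\VERT_*$.

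Next I would run the same two-step test-function construction. Step one: test with $(\bv_h,0,0)$ where $\bv_h$ is the inf-sup function for $\varphi_h$, giving (after Young's inequality with parameter $\epsilon_1$)
\[
\widetilde{M}_h(\bu_h,p^{\mathrm P}_h,\varphi_h;\bv_h,0,0)\ge\Bigl(\xi-\tfrac{1}{2\epsilon_1}\Bigr)\Bigl\|\tfrac{1}{\sqrt{2\mu}}\varphi_h\Bigr\|_{0,\Omega}^2-\tfrac{\epsilon_1}{2}\|\bu_h\|_{*,\cT_h}^2.
\]
Step two: test with $(\bu_h,-p^{\mathrm P}_h,-\varphi_h)$; the mixed $b_2$ terms combine as $-2b_2(p^{\mathrm P}_h,\varphi_h)$ and, together with $\tilde a_2(p^{\mathrm P}_h,p^{\mathrm P}_h)$ and $a_3(\varphi_h,\varphi_h)$, reproduce $\frac{1}{\lambda^{\mathrm E}}\|\varphi_h\|^2_{0,\Omega^{\mathrm E}}+\frac{1}{\lambda^{\mathrm P}}\|\varphi_h-\alpha p^{\mathrm P}_h\|^2_{0,\Omega^{\mathrm P}}+c_0\|p^{\mathrm P}_h\|^2_{0,\Omega^{\mathrm P}}$ exactly as in the continuous-pressure case (this algebraic identity is parameter-independent and unaffected by the discontinuity of $p^{\mathrm P}_h$), while $a_1^h(\bu_h,\bu_h)$ and $a_2^h(p^{\mathrm P}_h,p^{\mathrm P}_h)$ give $C_2\|\bu_h\|^2_{*,\cT_h}$ and $\norm{p^{\mathrm P}_h}^2_{*,\Omega^{\mathrm P}}$. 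Then combine: test with $(\bu_h+\delta_1\bv_h,-p^{\mathrm P}_h,-\varphi_h)$, choose $\epsilon_1=1/\xi$ and $\delta_1=C_2/\epsilon_1$, and conclude
\[
\widetilde{M}_h(\bu_h,p^{\mathrm P}_h,\varphi_h;\bu_h+\delta_1\bv_h,-p^{\mathrm P}_h,-\varphi_h)\gtrsim\VERT(\bu_h,p^{\mathrm P}_h,\varphi_h)\VERT_*^2,
\]
with the boundedness $\VERT(\bu_h+\delta_1\bv_h,-p^{\mathrm P}_h,-\varphi_h)\VERT_*^2\le 2\VERT(\bu_h,p^{\mathrm P}_h,\varphi_h)\VERT_*^2$ following from $\|\bv_h\|_{*,\cT_h}\lesssim\|\tfrac{1}{\sqrt{2\mu}}\varphi_h\|_{0,\Omega}$. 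The continuity bound is then immediate from Cauchy--Schwarz applied term by term to $\widetilde{M}_h$, using that each bilinear form is controlled by the corresponding piece of $\VERT\cdot\VERT_*$; for $a_2^h$ and $a_1^h$ one invokes the standard discrete trace inequality to absorb the consistency (mean--jump) terms into $\norm{\cdot}_{*,\Omega^{\mathrm P}}$ and $\norm{\cdot}_{*,\cT_h}$ respectively.

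The only genuinely new point relative to Theorem~\ref{new11}, and the step I expect to need the most care, is showing that $a_2^h$ behaves like a coercive, continuous form in the norm $\norm{\cdot}_{*,\Omega^{\mathrm P}}$ \emph{with constants independent of $\kappa/\eta$}: coercivity of the SIP form requires the penalty $\beta_{p^{\mathrm P}}$ large enough (uniformly in the mesh, via the inverse/trace inequality $\|\nabla_h q^{\mathrm P}_h\cdot\nn\|_{0,e}\lesssim h_e^{-1/2}\|\nabla_h q^{\mathrm P}_h\|_{0,K}$), and one must make sure the factor $\kappa/\eta$ is carried consistently through the consistency terms so it cancels and does not spoil robustness. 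Everything else is a verbatim transcription of the previous argument with $\norm{q^{\mathrm P}_h}^2_{0,\Omega^{\mathrm P}}+\norm{\kappa/\eta\,\nabla q^{\mathrm P}_h}^2_{0,\Omega^{\mathrm P}}$ replaced by $\norm{q^{\mathrm P}_h}^2_{*,\Omega^{\mathrm P}}$, so I would simply write ``proceeding exactly as in the proof of Theorem~\ref{new11}'' and highlight the coercivity/continuity of $a_2^h$ as the single modification.
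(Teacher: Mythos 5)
Your proposal is correct and coincides with the paper's own argument: the paper simply writes that the result follows ``proceeding similarly to the proof of Theorem~\ref{new11},'' and your proof spells out exactly that transcription, with the discrete inf-sup for $b_1$ unchanged, the same two-step test-function construction $(\bu_h+\delta_1\bv_h,-p^{\rmP}_h,-\varphi_h)$, the same choices $\epsilon_1=1/\xi$ and $\delta_1=C_2/\epsilon_1$, and the only genuinely new ingredient being coercivity and continuity of the SIP form $a_2^h$ in the mesh-dependent norm $\norm{\cdot}_{*,\Omega^{\rmP}}$ (for $\beta_{p^{\rmP}}$ large enough, with the factor $\kappa/\eta$ carried consistently so constants stay parameter-robust). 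You also correctly have $\|\bu_h\|_{*,\cT_h}^2$ squared in the Young's-inequality step, fixing what appears to be a typographical slip in the paper's displayed inequality for Theorem~\ref{new11}.
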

\begin{lemma}\label{formii_lem}
Let $(\tilde{\bu},\tilde{p}^\mathrm{P},\tilde{\varphi})$ be a generic triplet in $\bV_h \times \widetilde{\rQ}^\mathrm{P}_h \times \rZ_h$. 
Then the following bound holds 
\begin{align*}
\VERT(\bu-\bu_h, p-p^\mathrm{P}_h, \varphi-\varphi_h)\VERT_{\ast}\lesssim &\VERT(\bu-\tilde{\bu}, p^{\mathrm{P}}-\tilde{p}^\mathrm{P}, \varphi-\tilde{\varphi})\VERT+\biggl( \sum_{K\in \cT_h}  h_K^2 |\sqrt{2\mu}(\bu-\tilde{\bu})|_{2,K}^2\biggr)^{1/2}\\
&+\biggl( \sum_{K\in \cT_h} \frac{\kappa}{\eta} h_K^2 |\varphi-\tilde{\varphi}|_{2,K}^2\biggr)^{1/2}.
\end{align*}
\end{lemma}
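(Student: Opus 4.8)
The plan is to mimic the structure of the proof of Lemma~\ref{lem:aux04} almost verbatim, only tracking the one extra consistency term coming from the symmetric interior penalty discretisation $a_2^h$ of the Laplacian. First I would invoke the triangle inequality to split
\[
\VERT(\bu-\bu_h, p^{\mathrm{P}}-p^\mathrm{P}_h, \varphi-\varphi_h)\VERT_\ast
\le
\VERT(\bu-\tilde\bu, p^{\mathrm{P}}-\tilde p^\mathrm{P}, \varphi-\tilde\varphi)\VERT_\ast
+\VERT(\tilde\bu-\bu_h, \tilde p^{\mathrm{P}}-p^\mathrm{P}_h, \tilde\varphi-\varphi_h)\VERT_\ast,
\]
and then estimate the second (fully discrete) term using the global inf-sup/coercivity bound from Theorem~\ref{new12}. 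This produces $(\bv_h,q^\mathrm{P}_h,\psi_h)\in\bV_h\times\widetilde\rQ^\mathrm{P}_h\times\rZ_h$, of norm controlled by $\VERT(\tilde\bu-\bu_h,\tilde p^{\mathrm{P}}-p^\mathrm{P}_h,\tilde\varphi-\varphi_h)\VERT_\ast$, such that $\VERT(\tilde\bu-\bu_h,\dots)\VERT_\ast^2\lesssim \widetilde M_h(\tilde\bu-\bu_h,\tilde p^{\mathrm{P}}-p^\mathrm{P}_h,\tilde\varphi-\varphi_h;\bv_h,q^\mathrm{P}_h,\psi_h)$.

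Next I would exploit Galerkin orthogonality in the appropriate (non-conforming, consistency-modified) sense. Because the exact solution of \eqref{weakEP} is smooth enough for the penalty terms and the jump/average terms in $a_1^h$ and $a_2^h$ to vanish on it, the discrete solution $(\bu_h,p^\mathrm{P}_h,\varphi_h)$ satisfies $\widetilde M_h(\bu_h,p^\mathrm{P}_h,\varphi_h;\cdot)=\widetilde M_h(\bu,p^{\mathrm{P}},\varphi;\cdot)$ tested against discrete functions; hence
\[
\widetilde M_h(\tilde\bu-\bu_h,\tilde p^{\mathrm{P}}-p^\mathrm{P}_h,\tilde\varphi-\varphi_h;\bv_h,q^\mathrm{P}_h,\psi_h)
=\widetilde M_h(\tilde\bu-\bu,\tilde p^{\mathrm{P}}-p^{\mathrm{P}},\tilde\varphi-\varphi;\bv_h,q^\mathrm{P}_h,\psi_h).
\]
Now I would bound the right-hand side by continuity. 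The forms $b_1,b_2,a_3,\tilde a_2$ are controlled directly by $\VERT(\bu-\tilde\bu,p^{\mathrm{P}}-\tilde p^\mathrm{P},\varphi-\tilde\varphi)\VERT$ times $\VERT(\bv_h,q^\mathrm{P}_h,\psi_h)\VERT_\ast$. The DG form $a_1^h(\tilde\bu-\bu,\bv_h)$ needs the usual treatment: on the "volume" and "jump" parts it is bounded by $\norm{\tilde\bu-\bu}_{*,\cT_h}$; but the consistency (average) terms involving $\mean{\mu\beps_h(\bv_h)}$ paired with $\jump{(\tilde\bu-\bu)\otimes\nn}$ require the discrete trace inequality together with an inverse inequality on $\bv_h$, producing the extra contribution $\bigl(\sum_K h_K^2|\sqrt{2\mu}(\bu-\tilde\bu)|_{2,K}^2\bigr)^{1/2}$. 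Repeating exactly this argument for $a_2^h(\tilde p^{\mathrm{P}}-p^{\mathrm{P}},q^\mathrm{P}_h)$ — where the weight is $\kappa/\eta$ and the smoothness needed is on $\varphi$ (note that in the steady weak formulation the pressure variable playing the role of $p^{\mathrm{P}}$ couples through $a_2^h$, but the second-order term being penalised is the same elliptic operator, so the $H^2$-seminorm of the continuous variable enters) — yields the additional term $\bigl(\sum_K \tfrac{\kappa}{\eta} h_K^2|\varphi-\tilde\varphi|_{2,K}^2\bigr)^{1/2}$.

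Collecting the bounds, dividing through by one factor of $\VERT(\tilde\bu-\bu_h,\tilde p^{\mathrm{P}}-p^\mathrm{P}_h,\tilde\varphi-\varphi_h)\VERT_\ast$, and applying the triangle inequality once more gives exactly the claimed estimate. The main obstacle, as in Lemma~\ref{lem:aux04}, is handling the non-symmetric-looking consistency terms: one must be careful that the inverse inequality applied to the discrete test function $\bv_h$ (resp. $q^\mathrm{P}_h$) is absorbed into $\norm{\bv_h}_{*,\cT_h}$ (resp. $\norm{q^\mathrm{P}_h}_{*,\Omega^{\mathrm{P}}}$), so that the $h_K^2$-weighted second-order seminorms land entirely on the interpolation error $\bu-\tilde\bu$ and $\varphi-\tilde\varphi$, and that all constants remain independent of $\lambda$, $\mu$, $\kappa$, $\eta$, $c_0$ — which is guaranteed because every term has already been written in the parameter-weighted norms of Theorem~\ref{new12}.
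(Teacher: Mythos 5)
Your approach matches the paper's proof of Lemma~\ref{formii_lem} essentially step for step: triangle inequality to isolate the fully discrete difference, Theorem~\ref{new12} to pass to $\widetilde M_h$, Galerkin orthogonality/consistency of the SIP forms to swap $(\bu_h,p^{\mathrm{P}}_h,\varphi_h)$ for the exact solution, and then continuity with the usual DG consistency estimates yielding the $h_K^2$-weighted $\bH^2$-seminorm terms. The paper's proof is terser but is doing exactly this, so your reconstruction is faithful.

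One point to flag, though. You correctly observe that the extra consistency term from $a_2^h$ naturally produces $\bigl(\sum_K \tfrac{\kappa}{\eta} h_K^2\,|p^{\mathrm{P}}-\tilde p^{\mathrm{P}}|_{2,K}^2\bigr)^{1/2}$ — the bilinear form $a_2^h$ pairs $p^{\mathrm{P}}$, not $\varphi$, so the smoothness needed is on the fluid pressure. The parenthetical you added to reconcile this with the $|\varphi-\tilde\varphi|_{2,K}$ appearing in the lemma statement does not hold up: $\varphi$ never enters $a_2^h$, and ``the second-order operator is the same'' does not transfer regularity between distinct unknowns. Your instinct was right and you should have trusted it; the $\varphi$ in the last term of the lemma (and in the paper's own proof) is almost certainly a typo for $p^{\mathrm{P}}$. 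This is corroborated by Theorem~\ref{th:apriori}, which is derived from this very lemma and carries the term $\|\tfrac{\kappa}{\eta}\nabla p^{\mathrm{P}}\|^2_{k+2,\Omega^{\mathrm{P}}}$, not a corresponding $\varphi$ term. So the argument you sketched is sound and actually delivers the correct (intended) bound; only the hand-waving to force $\varphi$ into that slot should be dropped.
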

\begin{proof}
Similarly to the proof of Lemma~\ref{lem:aux04}, we obtain 
\begin{align*}
C_2\VERT(\tilde{\bu}-\bu_h, \tilde{p}^{\mathrm{P}}-p^\mathrm{P}_h, \tilde{\varphi}-\varphi_h)\VERT_{\ast}^2&\le \widetilde{M}_h(\tilde{\bu}-\bu_h, \tilde{p}^{\mathrm{P}}-p^\mathrm{P}_h, \tilde{\varphi}-\varphi_h; \bv_h, q^\mathrm{P}_h, \psi_h) \\
&\le \widetilde{M}_h(\tilde{\bu}, \tilde{p}^{\mathrm{P}}, \tilde{\varphi}; \bv_h, q^\mathrm{P}_h, \psi_h)-M_h(\bu_h, p^\mathrm{P}_h, \varphi_h; \bv_h, q^\mathrm{P}_h, \psi_h)\\
&\le \widetilde{M}_h(\tilde{\bu}-\bu, \tilde{p}^{\mathrm{P}}-p^\mathrm{P}, \tilde{\varphi}-\varphi; \bv_h, q^\mathrm{P}_h, \psi_h)\\
&\lesssim \VERT(\bu-\tilde{\bu}, p^{\mathrm{P}}-\tilde{p}^\mathrm{P}, \varphi-\tilde{\varphi})\VERT_{\ast}+\biggl( \sum_{K\in \cT_h}  h_K^2 |\sqrt{2\mu}(\bu-\tilde{\bu})|_{2,K}^2\biggr)^{1/2}\\
&\qquad +\biggl( \sum_{K\in \cT_h} \frac{\kappa}{\eta} h_K^2 |\varphi-\tilde{\varphi}|_{2,K}^2\biggr)^{1/2}
\VERT(\tilde{\bu}-\bu_h, \tilde{p}^{\mathrm{P}}-p^\mathrm{P}_h, \tilde{\varphi}-\varphi_h)\VERT_{\ast}, 
\end{align*}
where we have used Theorem \ref{new12} in combination with triangle inequality. 
\end{proof}
\begin{theorem}\label{th:apriori}
Let $(\bu,p^\mathrm{P},\varphi)$ and $(\bu_h,p^\mathrm{P}_h,\varphi_h)$ be the unique solutions of the continuous and discrete problems (\ref{weakEP}) and (\ref{semidis12}), respectively. 
If $\bu\in \bV\cap\bH^{k+2}(\Omega)$, $p^\mathrm{P}\in\rQ\cap H^{k+2}(\Omega^{\mathrm{P}})$ and $\varphi\in \rZ\cap H^{k+1}(\Omega)$ with $k\ge 0$ then
\begin{align*}
\VERT(\bu-\bu_h, p^\mathrm{P}-p^\mathrm{P}_h, \varphi-\varphi_h)\VERT_{\ast}& \lesssim  h^{k+1}\biggl(|\sqrt{2\mu}\bu|_{k+2,\Omega}^2 +\|\frac{1}{\sqrt{2\mu}}\varphi\|^2_{k+1,\Omega}+\frac{1}{\lambda^{\mathrm{E}}} \|\varphi^{\mathrm{E}}\|^2_{k+1,\Omega^{\mathrm{E}}}+\frac{1}{\lambda^{\mathrm{P}}}\|\varphi^{\mathrm{P}}\|^2_{k+1,\Omega^{\mathrm{P}}}\\
&\qquad\qquad  +(c_0+\frac{\alpha^2}{\lambda^{\mathrm{P}}})\|p^\mathrm{P}\|^2_{k+1,\Omega^{\mathrm{P}}}+ \|\frac{\kappa}{\eta}\nabla p^\mathrm{P}\|^2_{k+2,\Omega^{\mathrm{P}}}\biggr).\end{align*}
\end{theorem}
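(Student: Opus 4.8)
The proof is a quasi-optimality (C\'ea-type) argument built on Lemma~\ref{formii_lem}: I would pick a concrete interpolation triple in the discrete spaces, invoke that lemma, and then control each resulting contribution by elementwise approximation estimates, taking care that every parameter weight lands exactly in the slot prescribed by the right-hand side so that no hidden dependence on $\lambda^\mathrm{E}$, $\lambda^\mathrm{P}$, $c_0$ or $\kappa/\eta$ creeps in. (That $(\bu_h,p^\mathrm{P}_h,\varphi_h)$ is well defined and unique is guaranteed by the discrete inf-sup and boundedness of $\widetilde{M}_h$ from Theorem~\ref{new12} on the finite-dimensional space $\bV_h\times\widetilde{\rQ}^\mathrm{P}_h\times\rZ_h$; unique solvability of the continuous problem is the one recalled from \cite{gira11,girault15}.)

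First I would fix the interpolants. Take $\tilde{\bu}=\Pi_h\bu$, the Fortin/BDM interpolant of \eqref{property1}--\eqref{property2}, which in addition obeys the local estimate \eqref{eq:approx}; take $\tilde{p}^\mathrm{P}=I_h p^\mathrm{P}$, the degree-$(k+1)$ Lagrange interpolant, which lies in $\rQ^\mathrm{P}_h\subset\widetilde{\rQ}^\mathrm{P}_h$ so that the interior normal jumps $\jump{(p^\mathrm{P}-\tilde{p}^\mathrm{P})\nn}$ vanish and the penalty part of $\norm{\cdot}_{\ast,\OmP}$ drops out; and take $\tilde{\varphi}=\pi_h\varphi$, the $\rL^2(\Omega)$-orthogonal projection onto $\rZ_h$. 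With these choices, Lemma~\ref{formii_lem} reduces the task to bounding $\VERT(\bu-\Pi_h\bu,\,p^\mathrm{P}-I_h p^\mathrm{P},\,\varphi-\pi_h\varphi)\VERT$ together with the two second-order remainder terms appearing there.

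It then remains to expand $\VERT\cdot\VERT^2$ and estimate each summand, using that $\mu$ and $\lambda$ are piecewise constant so that their weights commute with the interpolation bounds. The $\sqrt{2\mu}$-weighted broken symmetric-gradient part of $\norm{\bu-\Pi_h\bu}_{\ast,\cT_h}$ is $\lesssim h^{k+1}|\sqrt{2\mu}\bu|_{k+2,\Omega}$ by \eqref{property2} and \eqref{eq:approx}, and its tangential-jump and interfacial ($\mu^0$-weighted) penalty terms are brought to the same order by a scaled trace inequality, $\sum_e h_e^{-1}\norm{\jump{(\bu-\Pi_h\bu)\otimes\nn}}_{0,e}^2\lesssim\sum_K\bigl(h_K^{-2}\norm{\bu-\Pi_h\bu}_{0,K}^2+|\bu-\Pi_h\bu|_{1,K}^2\bigr)$. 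For the pressures, standard projection/interpolation bounds give $\norm{\frac{1}{\sqrt{2\mu}}(\varphi-\pi_h\varphi)}_{0,\Omega}\lesssim h^{k+1}\norm{\frac{1}{\sqrt{2\mu}}\varphi}_{k+1,\Omega}$ and $\frac{1}{\lambda^\mathrm{E}}\norm{\varphi^\mathrm{E}-\pi_h\varphi^\mathrm{E}}_{0,\OmE}^2\lesssim h^{2(k+1)}\frac{1}{\lambda^\mathrm{E}}\norm{\varphi^\mathrm{E}}_{k+1,\OmE}^2$; for the coupled slot I split $\norm{(\varphi^\mathrm{P}-\pi_h\varphi^\mathrm{P})-\alpha(p^\mathrm{P}-I_h p^\mathrm{P})}_{0,\OmP}^2\lesssim\norm{\varphi^\mathrm{P}-\pi_h\varphi^\mathrm{P}}_{0,\OmP}^2+\alpha^2\norm{p^\mathrm{P}-I_h p^\mathrm{P}}_{0,\OmP}^2$, so the $\frac{1}{\lambda^\mathrm{P}}$-weighted contribution produces $h^{2(k+1)}\bigl(\frac{1}{\lambda^\mathrm{P}}\norm{\varphi^\mathrm{P}}_{k+1,\OmP}^2+\frac{\alpha^2}{\lambda^\mathrm{P}}\norm{p^\mathrm{P}}_{k+1,\OmP}^2\bigr)$, which combined with $c_0\norm{p^\mathrm{P}-I_h p^\mathrm{P}}_{0,\OmP}^2\lesssim h^{2(k+1)}c_0\norm{p^\mathrm{P}}_{k+1,\OmP}^2$ assembles the factor $(c_0+\frac{\alpha^2}{\lambda^\mathrm{P}})\norm{p^\mathrm{P}}_{k+1,\OmP}^2$. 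The diffusive slot gives $\norm{\frac{\kappa}{\eta}\nabla(p^\mathrm{P}-I_h p^\mathrm{P})}_{0,\OmP}\lesssim h^{k+1}\norm{\frac{\kappa}{\eta}\nabla p^\mathrm{P}}_{k+2,\OmP}$, the extra index absorbing a non-constant $\kappa$ via the Leibniz rule. Finally, the first remainder of Lemma~\ref{formii_lem} is $\bigl(\sum_K h_K^2|\sqrt{2\mu}(\bu-\Pi_h\bu)|_{2,K}^2\bigr)^{1/2}\lesssim h^{k+1}|\sqrt{2\mu}\bu|_{k+2,\Omega}$ by \eqref{eq:approx}, while the remaining $\kappa/\eta$-weighted second-order remainder — which, given the structure of $a_2^h$, is the one carrying $p^\mathrm{P}-I_h p^\mathrm{P}$ — is absorbed, thanks to the assumed regularity $p^\mathrm{P}\in\rH^{k+2}(\OmP)$, into $h^{k+1}\norm{\frac{\kappa}{\eta}\nabla p^\mathrm{P}}_{k+2,\OmP}$. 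Summing the estimates and taking square roots yields the stated bound.

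The parameter-weight bookkeeping forms the bulk of the work but is routine; the one genuinely delicate step is the robust broken-norm interpolation bound for $\norm{\bu-\Pi_h\bu}_{\ast,\cT_h}$. There one must verify that the canonical BDM degrees of freedom reproduce all polynomials of degree $\le k+1$ (so that \eqref{eq:approx} applies), that this is compatible with the Fortin commutation property \eqref{property1}, and — most importantly — that the tangential-jump and interfacial penalty contributions, carrying the weights $\mu$ and $\mu^0=\max\{\mu^\mathrm{E},\mu^\mathrm{P}\}$, are dominated by elementwise quantities through scaled trace inequalities without re-introducing dependence on the Lam\'e moduli; this is exactly where the piecewise-constancy of $\mu$ together with the discrete Korn inequality of \cite{brenner04} (already invoked for the norm equivalence in \eqref{eq:h-norms}) is used. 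Once this local estimate is in hand, the theorem is immediate from the quasi-optimality packaged in Lemma~\ref{formii_lem}.
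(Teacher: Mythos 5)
Your proposal is correct and follows exactly the same route as the paper, whose entire proof is the single sentence ``Combining Lemma~\ref{formii_lem} with the approximation results of H(div)-conforming spaces \eqref{eq:approx} leads to the stated result.'' Your argument is precisely the detailed unpacking of that sentence --- choose the Fortin/BDM interpolant for $\bu$, a conforming Lagrange interpolant for $p^\mathrm{P}$, and the $\rL^2$-projection for $\varphi$, then expand $\VERT\cdot\VERT$ term by term with scaled trace inequalities for the jump penalties and the parameter-weighted elementwise bounds --- including your correct reading that the $\kappa/\eta$-weighted second-order remainder in Lemma~\ref{formii_lem} must carry $p^\mathrm{P}-\tilde p^\mathrm{P}$ rather than $\varphi-\tilde\varphi$ for the rates to close.
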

\begin{proof}
Combining Lemma \ref{formii_lem} with the approximation results of H(div)-conforming spaces \eqref{eq:approx} leads to the stated result.
\end{proof}

\begin{remark}\label{quad11}
If instead of \eqref{eq:discrete-spaces} we employ $RT_{k}\times \mathbb{Q}_{k}\times \mathbb{Q}_{k} \,(k\ge 1)$ on rectangular meshes (where $RT_k$ is the Raviart--Thomas finite element space and $\mathbb{Q}_k$ is the discontinuous finite element space of degree $k$), then the \textit{a priori} error estimates from Theorem~\ref{th:apriori} are modified as follows    
\begin{align*}
\VERT(\bu-\bu_h, p^\mathrm{P}-p^\mathrm{P}_h, \varphi-\varphi_h)\VERT_{\ast}& \lesssim  h^{k}\biggl(|\sqrt{2\mu}\bu|_{k+1,\Omega}^2 +\|\frac{1}{\sqrt{2\mu}}\varphi\|^2_{k,\Omega}+\frac{1}{\lambda^{\mathrm{E}}} \|\varphi^{\mathrm{E}}\|^2_{k,\Omega^{\mathrm{E}}}+\frac{1}{\lambda^{\mathrm{P}}}\|\varphi^{\mathrm{P}}\|^2_{k,\Omega^{\mathrm{P}}}\\
&\qquad\qquad  +(c_0+\frac{\alpha^2}{\lambda^{\mathrm{P}}})\|p^\mathrm{P}\|^2_{k,\Omega^{\mathrm{P}}}+ \|\frac{\kappa}{\eta}\nabla p^\mathrm{P}\|^2_{k+1,\Omega^{\mathrm{P}}}\biggr).\end{align*}
The estimate results from using the approximation properties of the corresponding finite element family on quadrilateral meshes.
\end{remark}

\begin{remark}\label{rem:norms}
Consider the following norm
for $(\bv_h, q^\mathrm{P}_h, \psi_h) \in \bV_h \times \widetilde{\rQ}^\mathrm{P}_h \times {Z}_h$ 
\[\VERT(\bv_h, q^\mathrm{P}_h, \psi_h)\VERT_{\ast\ast}^2:=\norm{\bv_h}_{*,\cT_h}^2 + \|\frac{1}{\sqrt{2\mu}}\psi_h\|^2_{0,\Omega}+\frac{1}{\lambda^{\mathrm{E}}}\|\psi_h\|^2_{0,\Omega^{\mathrm{E}}}+\frac{1}{\lambda^{\mathrm{P}}}\|\psi_h\|^2_{0,\Omega^{\mathrm{P}}}+(c_0+\frac{\alpha^2}{\lambda^{\mathrm{P}}})\|q^\mathrm{P}_h\|^2_{0,\Omega^{\mathrm{P}}}+\|q^\mathrm{P}_h\|^2_{\ast,\Omega^{\mathrm{P}}}.\]
To establish the equivalence between $\VERT\cdot\VERT_{\ast}$ and  $\VERT\cdot\VERT_{\ast\ast}$ in $\bV_h \times \widetilde{\rQ}^\mathrm{P}_h \times {Z}_h$ we need to prove
that
\begin{subequations}
\begin{align}
  \label{eq:norm1}
  \VERT(\bu_h, p^\mathrm{P}_h,\varphi_h)\VERT_{\ast} &\lesssim \VERT(\bu_h, p^\mathrm{P}_h,\varphi_h)\VERT_{\ast\ast},\\
  \label{eq:norm2} 
  \VERT(\bu_h, p^\mathrm{P}_h,\varphi_h)\VERT_{\ast\ast} &\lesssim \VERT(\bu_h, p^\mathrm{P}_h,\varphi_h)\VERT_{\ast}.
\end{align}
\end{subequations}
Using the Cauchy--Schwarz inequality readily implies \eqref{eq:norm1}, whereas  \eqref{eq:norm2}
holds
whenever $\frac{\alpha^2}{\lambda^{\mathrm{P}}}\in[0,9c_0/10)$.

Similarly, for $(\bv_h, q^\mathrm{P}_h, \psi_h) \in \bV_h \times {\rQ}^\mathrm{P}_h \times {Z}_h$  we can establish an equivalence between the norm $\VERT\cdot\VERT$ defined in \eqref{eq:triplenorm} and 
\[\VERT(\bv_h, q^\mathrm{P}_h, \psi_h)\VERT_\sharp^2:=\norm{\bv_h}_{*,\cT_h}^2 + \|\frac{1}{\sqrt{2\mu}}\psi_h\|^2_{0,\Omega}+\frac{1}{\lambda^{\mathrm{E}}}\|\psi_h\|^2_{0,\Omega^{\mathrm{E}}}+\frac{1}{\lambda^{\mathrm{P}}}\|\psi_h\|^2_{0,\Omega^{\mathrm{P}}}+(c_0+\frac{\alpha^2}{\lambda^{\mathrm{P}}})\|q^\mathrm{P}_h\|^2_{0,\Omega^{\mathrm{P}}}+\|\frac{\kappa}{\eta} \nabla q^\mathrm{P}_h\|^2_{0,\Omega^{\mathrm{P}}}.\]
\end{remark}

\section{Residual-based \emph{a posteriori} error analysis}\label{sec:aposte}
In this section we derive robust \emph{a posteriori} estimators for the two families of mixed finite element approximations, and show reliability and efficiency independently of the sensible model parameters.  The error estimates  are obtained in a similar fashion as in, e.g.,  \cite{anaya22}.  Firstly, we discuss \emph{a posteriori} error estimation for formulation \eqref{semidis11}. 

\subsection{Definition of the bulk and edge residuals}\label{def_apost_formI}
First we 
define the local elastic error estimator $\estE$
and the elastic data oscillation $\UpsilonE$ for each $K\in\mathcal{T}_h^{\mathrm{E}}$ as
\begin{align*}
\estE^2&:= \frac{h_K^2}{\mu^{\mathrm{E}}}\|\mathbf{R}_1^\mathrm{E}
\|_{0,K}^2+\sum_{e\in\partial K}\frac{h_e}{\mu^{\mathrm{E}}} \|\mathbf{R}_e^\mathrm{E}
\|_{0,e}^2+\sum_{e\in\partial K}\frac{\beta_{\bu}\mu^{\mathrm{E}}}{h_e} \|[\![\bu^{\mathrm{E}}_h\otimes\nn]\!]_e 
\|_{0,e}^2+{\frac{1}{\frac{1}{\mu^{\mathrm{E}}}+\frac{1}{\lambda^{\mathrm{E}}}}}\|R_2^\mathrm{E}\|_{0,K}^2, \\
\UpsilonE^2&:= \frac{h_K^2}{\mu^{\mathrm{E}}}\|\bb^{\mathrm{E}}-\bb_h^{\mathrm{E}}\|_{0,K}^2,
\end{align*}
where $\ff^{\mathrm{E}}_h\in\mathbf{L}^2(\Omega^{\mathrm{E}})$ is a piecewise polynomial approximation of $\ff^{\mathrm{E}}$. Moreover, 
the element-wise residuals are  
\[
\mathbf{R}_1^\mathrm{E}:= \{\bb^\mathrm{E}_h +\bdiv( 2{\mu^\mathrm{E}} \beps(\bu^\mathrm{E}_h)- \varphi^\mathrm{E}_h \bI)\}_K,\quad 
R_2^\mathrm{E}:= \{\vdiv\bu_h^\mathrm{E} +(\lambda^{\mathrm{E}})^{-1}\varphi^\mathrm{E}_h\}_K,
\]  
and the edge residual in the elastic subdomain is defined as
\[
\mathbf{R}_e^\mathrm{E}:=\begin{cases}
\frac{1}{2}[\![( 2{\mu^\mathrm{E}} \beps(\bu^\mathrm{E}_h)- \varphi^\mathrm{E}_h\bI)\nn]\!]_e & e\in \mathcal{E}(\mathcal{T}_h^{\mathrm{E}})\setminus\Gamma,\\
(( 2{\mu^\mathrm{E}} \beps(\bu^\mathrm{E}_h)- \varphi^\mathrm{E}_h\bI)\nn)_e & e\in \Gamma^\mathrm{E}_{N},\\
\cero & e\in \Gamma^\mathrm{E}_{D}.
\end{cases}
\]

Next, we define the poroelastic local error estimator $\estP$, for each $K\in\mathcal{T}_h^{\mathrm{P}}$,  as
\[
\estP^2:= \frac{h_K^2}{\mu^{\mathrm{P}}}\|\mathbf{R}_1^\mathrm{P}
\|_{0,K}^2+\sum_{e\in\partial K}\frac{h_e}{\mu^{\mathrm{P}}} \|\mathbf{R}_e^\mathrm{P}
\|_{0,e}^2+\sum_{e\in\partial K}\frac{\beta_{\bu}\mu^{\mathrm{P}}}{h_e} \|[\![\bu^{\mathrm{P}}_h\otimes\nn]\!]_e 
\|_{0,e}^2+\rho_d\|R_2^\mathrm{P}\|_{0,K}^2
+\rho_1\|R_3^\mathrm{P}\|_{0,K}^2 +\sum_{e\in\partial K} \rho_2\|{R}_e^\mathrm{P}\|_{0,e}^2,
\]
where the elemental residuals assume the following form 
\begin{align*}
\mathbf{R}_1^\mathrm{P} &:= \{\bb^\mathrm{P}_h +\bdiv( 2{\mu^\mathrm{P}} \beps(\bu^\mathrm{P}_h)- \varphi^\mathrm{P}_h \bI)\}_K,\\ 
R_2^\mathrm{P}&:= \{\vdiv\bu_h^\mathrm{P}+(\lambda^{\mathrm{P}})^{-1}\varphi_h^\mathrm{P}-\alpha(\lambda^{\mathrm{P}})^{-1}p_h^\mathrm{P}\}_K,\\
R_3^\mathrm{P}&:=\{s^{\mathrm{P}}_h-(c_0+\alpha^2(\lambda^\mathrm{P})^{-1}p_h^\mathrm{P}+\alpha(\lambda^{\mathrm{P}})^{-1}\varphi_h^\mathrm{P}+\eta^{-1}\vdiv[\kappa(\nabla p_h^\mathrm{P}-\rho \textbf{g})]\}_K,
\end{align*}
and the edge residuals are defined as
\[
\mathbf{R}_e^\mathrm{P}:=\begin{cases}
\frac{1}{2}[\![(2{\mu^\mathrm{P}} \beps(\bu^\mathrm{P}_h)- \varphi^\mathrm{P}_h \bI)\nn]\!]_e & e\in \mathcal{E}(\mathcal{T}_h^\mathrm{P})\setminus\Gamma\\
((2{\mu^\mathrm{P}} \beps(\bu^\mathrm{P}_h)- \varphi^\mathrm{P}_h \bI)\nn)_e & e\in \Gamma^\mathrm{P}_{N}\\
\cero & e\in \Gamma^\mathrm{P}_{D}
\end{cases}, \quad 
{R}_e^\mathrm{P}:=\begin{cases}
\frac{1}{2}[\![\eta^{-1}\kappa(\nabla p_h-\rho \textbf{g})\cdot\nn]\!]_e & e\in \mathcal{E}(\mathcal{T}_h^\mathrm{P})\setminus\Gamma\\
(\eta^{-1}\kappa(\nabla p_h-\rho \textbf{g})\cdot\nn)_e & e\in \Gamma^\mathrm{P}_{D}\\
0 & e\in \Gamma^\mathrm{P}_{N}
\end{cases},
\]
with the scaling constants taken as 
\[
\rho_1:=\min\{(c_0+\alpha^2(2\mu^{\mathrm{P}}+\lambda^{\mathrm{P}})^{-1})^{-1},h_K^2 \eta \kappa^{-1}\},\quad \rho_2:= \eta\kappa^{-1}h_e, \quad \rho_d:= ((\mu^{\mathrm{P}})^{-1}+(2\mu^{\mathrm{P}}+\lambda^{\mathrm{P}})^{-1})^{-1}.
\]
On the other hand, the  poroelastic oscillation term adopts the following specification 
\[
\UpsilonP^2= h_K^2 (\mu^{\mathrm{P}})^{-1}\|\bb^{\mathrm{P}}-\bb^{\mathrm{P}}_h\|_{0,K}^2+\rho_1 \|s^{\mathrm{P}}-s^{\mathrm{P}}_h\|_{0,K}^2.
\]

Next we recall that $\estE^2$ and  $ \estP^2$ are the elasticity and poroelasticity estimators, respectively. 
Let {us define the interface and total estimators as follows 
\begin{align*}
  \estint^2&:= h_e (\mu^{\mathrm{E}}+\mu^{\mathrm{P}})^{-1}\|\mathbf{R}_{\Sigma}\|_{0,e}^2+h_e \eta \kappa^{-1}\|\widehat{R}_{\Sigma}\|_{0,e}^2 +
\frac{\beta_{\bu}\mu_0}{h_e}\|\jump{\bu_h\otimes\nn}_e\|^2_{0,e}, \quad 
\Xi^2 :=\sum_{K\in\mathcal{T}_h^{\mathrm{E}}} \estE^2+\sum_{K\in\mathcal{T}_h^{\mathrm{P}}} \estP^2+\sum_{e\in \mathcal{E}_h^\Sigma} 
\estint^2, 
\end{align*}
where}
\[
\mathbf{R}_{\Sigma}:= \{( 2{\mu^\mathrm{E}} \beps(\bu^\mathrm{E}_h)- \varphi^\mathrm{E}_h\bI)\nn -( 2{\mu^\mathrm{P}} \beps(\bu^\mathrm{P}_h)- \varphi^\mathrm{E}_h\bI)\nn\}, \quad 
\widehat{R}_{\Sigma}:=\{\kappa\xi^{-1}(\nabla p^{\mathrm{P}}_h-\rho \textbf{g})\cdot \nn\}.
\]
In addition we define the global data oscillations term $\Upsilon$ as
\begin{align*}
\Upsilon^2:=\sum_{K\in\mathcal{T}_h\cap\Omega^{\mathrm{E}}}\UpsilonE^2+\sum_{K\in\mathcal{T}_h\cap\Omega^{\mathrm{P}}}\UpsilonP^2,
\end{align*}
where $\UpsilonE$ and $\UpsilonP$ are the local data oscillations for elasticity and poroelasticity, respectively.
\subsection{Reliability estimates}

First we introduce the following modified bilinear form $\widehat{M}_h(\cdot, \cdot, \cdot; \cdot, \cdot, \cdot)$ as
\begin{align*}
\widehat{M}_h(\bu_h, p^\mathrm{P}_h, \varphi_h; \bv_h, q^\mathrm{P}_h, \psi_h)&:= \tilde{a}_1^h(\bu_h,\bv_h)+{b}_1(\bv_h,\varphi_h)  +{\tilde{a}_2(p_h^\mathrm{P},q_h^\mathrm{P})}+a_2(p_h^\mathrm{P},q_h^\mathrm{P})\\
& \qquad+b_2(q_h^\mathrm{P}, \, \varphi_h)+{b}_1(\bu_h,\psi_h) + b_2(p_h^\mathrm{P},\psi_h)-a_3(\varphi_h,\psi_h).
\end{align*}
Moreover, the following relation holds
\begin{align}
 {a}^h_1(\bu_h, \bv_h)  = \tilde{a}^h_1(\bu_h, \bv_h)  +K_h(\bu_h,\bv_h),
\end{align} 
where the last term on the right-hand side is the consistency contribution and it can be written as 
\begin{align*}
K_h(\bu_h,\bv_h):=- 2 \sum_{e\in\cE_h\cup \Gamma_D^*} \bigl( \langle \mean{{\mu} \beps_h(\bu_h)} , \jump{\bv_h\otimes \nn} \rangle_{0,e} 
+ \langle \mean{{\mu}\beps_h(\bv_h)}, \jump{\bu_h\otimes\nn }\rangle_{0,e}  \bigr).
\end{align*}
\begin{theorem}\label{connew11}
For every $(\bu, p^\mathrm{P}, \varphi) \in \bV \times \rQ^\mathrm{P} \times \rZ$, there exists $(\bv, q^\mathrm{P}, \psi) \in \bV \times \rQ^\mathrm{P} \times \rZ$ with $\VERT(\bv, q^\mathrm{P}, \psi)\VERT\lesssim \VERT(\bu, p^\mathrm{P}, \varphi)\VERT$ such that
\[
\widehat{M}_h(\bu, p^\mathrm{P}, \varphi; \bv, q^\mathrm{P}, \psi)\gtrsim \VERT(\bu, p^\mathrm{P}, \varphi)\VERT^2,
\]
where the triple norm is defined in \eqref{eq:triplenorm}. 
\end{theorem}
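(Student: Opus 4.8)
The plan is to follow the two-step strategy of the proof of Theorem~\ref{new11}, transplanted to the continuous spaces and adjusted to the sign pattern of $\widehat{M}_h$. The starting remark is that every $\bv\in\bV = \bH^1_{\Gamma_D}(\Omega)$ is continuous across interior facets and vanishes on $\Gamma_D^*$, so all the jump terms $\jump{\bv\otimes\nn}$ entering $\tilde{a}_1^h$ (and the consistency term $K_h$) drop out; hence, on $\bV\times\bV$, $\tilde{a}_1^h$ reduces to $2(\mu\beps(\cdot),\beps(\cdot))_{0,\Omega}$, with $\tilde{a}_1^h(\bv,\bv) = \norm{\sqrt{2\mu}\beps(\bv)}_{0,\Omega}^2 = \norm{\bv}_{*,\cT_h}^2$ and $|\tilde{a}_1^h(\bv,\bw)| \le \norm{\bv}_{*,\cT_h}\norm{\bw}_{*,\cT_h}$ by Cauchy--Schwarz; in particular $\beta_{\bu}$ plays no role. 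Because $\widehat{M}_h$ carries $+\tilde{a}_2$ and $+a_2$, i.e. the opposite sign to $M_h$, the diagonal test must be taken with a plus sign in the pressure slot.

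Concretely, given an arbitrary $(\bu, p^\mathrm{P}, \varphi)\in \bV\times\rQ^\mathrm{P}\times\rZ$, I would first test with $(\bu, p^\mathrm{P}, -\varphi)$. A direct expansion shows that the two $b_1$-contributions cancel, as do the two $b_2$-contributions, leaving
\[
\widehat{M}_h(\bu, p^\mathrm{P}, \varphi; \bu, p^\mathrm{P}, -\varphi) = \tilde{a}_1^h(\bu,\bu) + \tilde{a}_2(p^\mathrm{P},p^\mathrm{P}) + a_2(p^\mathrm{P},p^\mathrm{P}) + a_3(\varphi,\varphi),
\]
a sum of nonnegative quantities equal to $\norm{\bu}_{*,\cT_h}^2 + (c_0 + \tfrac{\alpha^2}{\lambda^\mathrm{P}})\norm{p^\mathrm{P}}_{0,\OmP}^2 + a_2(p^\mathrm{P},p^\mathrm{P}) + \tfrac{1}{\lambda^\mathrm{E}}\norm{\varphi}_{0,\OmE}^2 + \tfrac{1}{\lambda^\mathrm{P}}\norm{\varphi}_{0,\OmP}^2$. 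Via $\tfrac{1}{\lambda^\mathrm{P}}\norm{\varphi-\alpha p^\mathrm{P}}_{0,\OmP}^2 \le 2\tfrac{1}{\lambda^\mathrm{P}}\norm{\varphi}_{0,\OmP}^2 + 2(c_0 + \tfrac{\alpha^2}{\lambda^\mathrm{P}})\norm{p^\mathrm{P}}_{0,\OmP}^2$ and the coercivity bound $a_2(p^\mathrm{P},p^\mathrm{P}) \gtrsim \norm{\tfrac{\kappa}{\eta}\nabla p^\mathrm{P}}_{0,\OmP}^2$, this dominates every term of $\VERT(\bu, p^\mathrm{P},\varphi)\VERT^2$ except $\|\tfrac{1}{\sqrt{2\mu}}\varphi\|_{0,\Omega}^2$. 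To recover that last piece I would use the continuous, $\mu$-weighted inf-sup property at the end of Section~\ref{sec:properties}: pick $\bw\in\bV$ with $b_1(\bw,\varphi)\ge \tilde C\|\tfrac{1}{\sqrt{2\mu}}\varphi\|_{0,\Omega}^2$ and $\norm{\bw}_{*,\cT_h} = \|\tfrac{1}{\sqrt{2\mu}}\varphi\|_{0,\Omega}$, so that
\[
\widehat{M}_h(\bu, p^\mathrm{P}, \varphi; \bw, 0, 0) = \tilde{a}_1^h(\bu,\bw) + b_1(\bw,\varphi) \ge -\tfrac{\epsilon}{2}\norm{\bu}_{*,\cT_h}^2 + \bigl(\tilde C - \tfrac{1}{2\epsilon}\bigr)\bigl\|\tfrac{1}{\sqrt{2\mu}}\varphi\bigr\|_{0,\Omega}^2
\]
by Young's inequality. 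The final test function is $(\bv,q^\mathrm{P},\psi) = (\bu + \delta\bw,\, p^\mathrm{P},\, -\varphi)$; by linearity in the test slot, $\widehat{M}_h(\bu,p^\mathrm{P},\varphi;\bv,q^\mathrm{P},\psi)$ is the sum of the two displayed lower bounds, the second scaled by $\delta$. Taking $\epsilon = 1/\tilde C$ and then $\delta>0$ so small that $\delta\tfrac{\epsilon}{2}$ is absorbed into the coefficient of $\norm{\bu}_{*,\cT_h}^2$ gives $\widehat{M}_h(\bu,p^\mathrm{P},\varphi;\bv,q^\mathrm{P},\psi)\gtrsim \VERT(\bu,p^\mathrm{P},\varphi)\VERT^2$.

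It remains to verify $\VERT(\bv,q^\mathrm{P},\psi)\VERT\lesssim \VERT(\bu,p^\mathrm{P},\varphi)\VERT$. The displacement part follows from the triangle inequality and $\norm{\bw}_{*,\cT_h} = \|\tfrac{1}{\sqrt{2\mu}}\varphi\|_{0,\Omega}$; the pressure slot of the test triple is $p^\mathrm{P}$ itself; and the total-pressure slot is $-\varphi$, for which one uses $\tfrac{1}{\lambda^\mathrm{P}}\norm{-\varphi - \alpha p^\mathrm{P}}_{0,\OmP}^2 \le 2\tfrac{1}{\lambda^\mathrm{P}}\norm{\varphi - \alpha p^\mathrm{P}}_{0,\OmP}^2 + 8\tfrac{\alpha^2}{\lambda^\mathrm{P}}\norm{p^\mathrm{P}}_{0,\OmP}^2$ and bounds the last summand against the already-controlled blocks. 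I expect the main obstacle to be precisely this final bookkeeping, together with the invocation of the continuous $\mu$-weighted inf-sup inequality: one has to confirm that the quantities produced by $\tilde{a}_2$ and $a_2$ --- namely $(c_0 + \tfrac{\alpha^2}{\lambda^\mathrm{P}})\norm{p^\mathrm{P}}_{0,\OmP}^2$ and the hydraulic-conductivity seminorm --- reconstruct the slightly differently weighted terms $c_0\norm{p^\mathrm{P}}_{0,\OmP}^2$ and $\norm{\tfrac{\kappa}{\eta}\nabla p^\mathrm{P}}_{0,\OmP}^2$ of $\VERT\cdot\VERT$ with constants independent of the Lam\'e parameters, $c_0$, $\alpha$, $\kappa$ and $h$ --- the same point that underlies the norm equivalences in Remark~\ref{rem:norms}. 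Everything else (the cancellation of the off-diagonal forms, the Young/absorption tuning, and the Cauchy--Schwarz estimates) is routine and parallels the proof of Theorem~\ref{new11}.
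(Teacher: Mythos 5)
The paper omits the proof of Theorem~\ref{connew11}, so the natural benchmark is the proof of the discrete analogue, Theorem~\ref{new11}. Your reduction of $\tilde{a}_1^h$ to $2(\mu\beps\cdot,\beps\cdot)_{0,\Omega}$ on $\bV\times\bV$, the computation of the diagonal test, and the augmentation by the continuous $\mu$-weighted inf-sup supremizer $\bw$ are all correct and directly parallel Theorem~\ref{new11}. The coercivity half of your argument is sound: the diagonal test $(\bu,p^\mathrm{P},-\varphi)$ produces $\tilde{a}_1^h(\bu,\bu)+\tilde{a}_2(p^\mathrm{P},p^\mathrm{P})+a_2(p^\mathrm{P},p^\mathrm{P})+a_3(\varphi,\varphi)$, and the estimate $\tfrac{1}{\lambda^\mathrm{P}}\|\varphi-\alpha p^\mathrm{P}\|_{0,\OmP}^2\le 2\tfrac{1}{\lambda^\mathrm{P}}\|\varphi\|_{0,\OmP}^2+2(c_0+\tfrac{\alpha^2}{\lambda^\mathrm{P}})\|p^\mathrm{P}\|_{0,\OmP}^2$ shows that this dominates every contribution to $\VERT(\bu,p^\mathrm{P},\varphi)\VERT^2$ except the $\tfrac{1}{\sqrt{2\mu}}$-weighted term, which $\bw$ supplies.

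The gap is in the other half of the statement, $\VERT(\bv,q^\mathrm{P},\psi)\VERT\lesssim\VERT(\bu,p^\mathrm{P},\varphi)\VERT$. Because $\widehat{M}_h$ carries $+\tilde{a}_2+a_2$, you are forced to keep $q^\mathrm{P}=+p^\mathrm{P}$ while negating $\psi=-\varphi$. This breaks the exact sign symmetry that makes Theorem~\ref{new11} work: there, with $(-p_h^\mathrm{P},-\varphi_h)$ in the test slot, one has $\|\psi_h-\alpha q_h^\mathrm{P}\|_{0,\OmP}=\|\varphi_h-\alpha p_h^\mathrm{P}\|_{0,\OmP}$ identically. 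In your case the relevant component of $\VERT(\bu+\delta\bw,p^\mathrm{P},-\varphi)\VERT^2$ is $\tfrac{1}{\lambda^\mathrm{P}}\|\varphi+\alpha p^\mathrm{P}\|_{0,\OmP}^2$, and the surplus term $8\tfrac{\alpha^2}{\lambda^\mathrm{P}}\|p^\mathrm{P}\|_{0,\OmP}^2$ in your expansion is \emph{not} controlled by $\VERT(\bu,p^\mathrm{P},\varphi)\VERT^2$ with constants independent of $(\alpha,\lambda^\mathrm{P},c_0,\mu^\mathrm{P})$: the triple norm \eqref{eq:triplenorm} supplies only $c_0\|p^\mathrm{P}\|_{0,\OmP}^2$, $\tfrac{1}{\lambda^\mathrm{P}}\|\varphi-\alpha p^\mathrm{P}\|_{0,\OmP}^2$ and $\tfrac{1}{2\mu^\mathrm{P}}\|\varphi\|_{0,\OmP}^2$, so you would need either $\tfrac{\alpha^2}{\lambda^\mathrm{P}}\lesssim c_0$ or $\mu^\mathrm{P}\lesssim\lambda^\mathrm{P}$ — exactly the restriction under which Remark~\ref{rem:norms} operates, and not one the paper is willing to assume (its $\lesssim$ is required to be parameter-robust, including near-zero storativity). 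Dismissing this as ``routine bookkeeping'' therefore misses the essential obstacle. Note that if the sign of $\tilde{a}_2+a_2$ in $\widehat{M}_h$ were the same as in $M_h$ (which the subsequent reliability proof, where $\widehat{M}_h$ is silently traded for $\widetilde{M}_h$, strongly suggests is the intended definition), then on $\bV\times\bV$ one has $\widehat{M}_h\equiv M_h$ and the entire argument of Theorem~\ref{new11}, with test triple $(\bu+\delta\bw,-p^\mathrm{P},-\varphi)$, carries over verbatim without any cross-term; your proof should be recast in that form, or else the parameter restriction must be stated explicitly.
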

Next, the $\bH(\rm{div})$-conforming {displacement} approximation is decomposed uniquely into
$\bu_h=\bu_h^c+\bu_h^r$, 
where $\bu_h^c\in\bV_h^c$ and $\bu_h^r\in(\bV_h^c)^{\bot}$, with $\bu^r_h=\bu_h-\bu^c_h\in\bV_h$.
\begin{lemma}
There holds
\[
\norm{\bu_h^r}_{*,\cT_h}\le
 {C}_r \left(\sum_{e\in\cE_h\setminus\cE_h^\Sigma}\frac{\beta_{\bu}\mu}{h_e} \|[\![\bu_h\otimes\nn]\!]_e \|_{0,e}^2 + \sum_{e\in \cE_h^\Sigma}\frac{\beta_{\bu}\mu_0}{h_e} \|[\![\bu_h\otimes\nn]\!]_e \|_{0,e}^2 \right)^{1/2}.
\]
\end{lemma}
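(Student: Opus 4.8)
The plan is to exploit that $\bu_h^c$ is the projection of $\bu_h$ onto $\bV_h^c$ orthogonal with respect to the inner product inducing $\norm{\cdot}_{*,\cT_h}$ (this is exactly what makes the splitting $\bu_h=\bu_h^c+\bu_h^r$ with $\bu_h^r\in(\bV_h^c)^{\bot}$ unique), so that the minimality property
\[
\norm{\bu_h^r}_{*,\cT_h}=\norm{\bu_h-\bu_h^c}_{*,\cT_h}=\min_{\bw_h\in\bV_h^c}\norm{\bu_h-\bw_h}_{*,\cT_h}
\]
holds. It then suffices to produce a single conforming comparison function $\bw_h$ and estimate $\norm{\bu_h-\bw_h}_{*,\cT_h}$. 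I would take $\bw_h=\mathcal{I}_h^{\mathrm{Os}}\bu_h\in\bV_h^c$, an Oswald-type nodal averaging of the BDM field $\bu_h$: at each vertex/edge/face Lagrange degree of freedom interior to $\Omega$ one assigns the average of the finitely many one-sided values of $\bu_h$, while the degrees of freedom on $\partial\Omega$ are kept unchanged. Since $\bu_h\in\bH(\vdiv,\Omega)$ its normal trace is already single-valued at every facet (and zero on $\Gamma_D$), so the averaging only repairs the \emph{tangential} jumps $[\![\bu_h\otimes\nn]\!]_e$; in particular $\bw_h\cdot\nn=0$ on $\Gamma_D$ and $\bw_h$ has the same tangential trace as $\bu_h$ on $\Gamma_D^*$, whence $\bu_h-\bw_h$ carries no jump contribution on $\Gamma_D^*$.

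The quantitative core is the classical facet-jump estimate for such averaging operators (the vector analogue of the Oswald / Karakashian--Pascal bound, requiring only shape-regularity of $\cT_h$): for every $K\in\cT_h$,
\[
h_K^{-2}\norm{\bu_h-\mathcal{I}_h^{\mathrm{Os}}\bu_h}_{0,K}^2+|\bu_h-\mathcal{I}_h^{\mathrm{Os}}\bu_h|_{1,K}^2\lesssim\sum_{\substack{e\in\cE_h \\ e\cap\overline{K}\neq\emptyset}} h_e^{-1}\norm{[\![\bu_h\otimes\nn]\!]_e}_{0,e}^2,
\]
in which only interior and interfacial tangential jumps appear on the right-hand side (boundary facets do not, as their degrees of freedom are not averaged). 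Using the pointwise bound $|\beps_h(\bv)|\le|\bnabla_h\bv|$, summing over $K$, and recalling that the $\Gamma_D^*$ term of $\norm{\cdot}_{*,\cT_h}$ vanishes for $\bu_h-\bw_h$, one obtains $\norm{\bu_h-\bw_h}_{*,\cT_h}^2$ — before inserting the material weights — bounded by a shape-regularity constant times $\sum_{e\in\cE_h}\frac{\beta_{\bu}}{h_e}\norm{[\![\bu_h\otimes\nn]\!]_e}_{0,e}^2$, the $\beta_{\bu}$ factor being harmless.

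It remains to reinstate the weights $\mu$ (resp.\ $\mu_0$ on $\cE_h^\Sigma$) so that $C_r$ is genuinely parameter-independent, and this is the delicate step. Inside either subdomain $\mu$ is constant, so the interior pieces of the previous bound are simply multiplied by $\mu^{\mathrm{E}}$ or $\mu^{\mathrm{P}}$ without changing the constant, pairing the element terms of $\norm{\cdot}_{*,\cT_h}$ with the facet terms of the asserted bound over $\cE_h\setminus\cE_h^\Sigma$. For an element abutting $\Sigma$ the averaging mixes elastic and poroelastic nodal values, so no single local Lam\'e modulus can be carried through; there one over-estimates the relevant coefficient by $\mu_0=\max\{\mu^{\mathrm{E}},\mu^{\mathrm{P}}\}$, which is exactly the weight appearing both in the interfacial term of $\norm{\cdot}_{*,\cT_h}$ and in the second sum of the claimed inequality, so no $\mu$-dependence is incurred; this touches only the $\cO(1)$ layer of elements adjacent to $\Sigma$. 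Collecting the weighted interior and interfacial contributions yields the stated inequality with $C_r$ depending only on shape-regularity, the polynomial degree $k$, and $\beta_{\bu}$. I expect the interface weighting to be the main obstacle: it is precisely what forces the definition of $\norm{\cdot}_{*,\cT_h}$ to use $\mu_0$ at $\Sigma$, and a $*$-norm using instead, say, a harmonic average of the two moduli there would require a subdomain-respecting variant of the averaging operator.
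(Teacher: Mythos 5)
Your overall strategy — identify $\|\bu_h^r\|_{*,\cT_h}$ as a best-approximation error relative to the conforming subspace, insert an explicit nodal-averaging comparison $\bw_h\in\bV_h^c$, and invoke the classical Oswald/Karakashian--Pascal facet-jump bound — matches the intent of the paper's very terse proof (``...from the decomposition ... and from the facet residual''), and you rightly observe that $\jump{\bu_h^r\otimes\nn}=\jump{\bu_h\otimes\nn}$ on every facet because $\bu_h^c$ is conforming, so the claim reduces to bounding the broken $\beps$ part of the norm. Your reading of $(\bV_h^c)^{\bot}$ as the complement with respect to the inner product inducing $\|\cdot\|_{*,\cT_h}$ is also the one that makes the minimality step work.

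The gap is in reinstating the weights near the interface. For an element $K$ that touches $\Sigma$ even only through a vertex $v$, the standard averaging at $v$ mixes one-sided values from both subdomains, and the resulting local estimate
\[
\|\bnabla_h(\bu_h-\bw_h)\|_{0,K}^2\lesssim\sum_{e\cap\overline K\neq\emptyset}h_e^{-1}\|\jump{\bu_h\otimes\nn}\|_{0,e}^2
\]
draws on facets on \emph{both} sides of $\Sigma$, since the chain from $K$ to an averaged element across the interface passes through $\cE_h^\Sigma$ and into the other subdomain. If $K$ lies on the larger-modulus side ($\mu_K=\mu_0$, say $\mu^{\mathrm P}>\mu^{\mathrm E}$), multiplying the bound by $2\mu_K$ assigns weight $\mu^{\mathrm P}$ to small-modulus facets in the patch, whereas the Lemma's right-hand side carries only $\mu^{\mathrm E}$ there, so a factor $\mu_0/\min\{\mu^{\mathrm E},\mu^{\mathrm P}\}$ enters $C_r$ and robustness is lost. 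Your proposed over-estimation $\mu_K\le\mu_0$ pushes in the wrong direction and does not fix this, and — contrary to your closing remark — the subdomain-respecting (one-sided) averaging is needed precisely for the $\mu_0$-weighted norm used in the paper, not just for a harmonic-average variant: defining $\bw_h$ at $\Sigma$-nodes by averaging only over elements on the \emph{larger}-modulus side ensures that elements on that side never see small-modulus patch facets, while elements on the smaller side only ever over-pay (charged at most $\mu_0$, which dominates their element weight $\mu^{\mathrm E}$ and matches or dominates the asserted facet weights). With that modification your argument closes with a parameter-independent $C_r$.
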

\begin{proof} It follows straightforwardly from the decomposition $\bu_h=\bu_h^c+\bu_h^r$   and 
from the facet residual.\end{proof}
\begin{theorem}[Reliability for the transmission problem]
Let $(\bu,p^{\mathrm{P}},\varphi)$ and $(\bu_h,p^{\mathrm{P}}_h,\varphi_h)$ be the solutions of the weak formulations (\ref{weakEP}) and (\ref{semidis11}), respectively. Then the following reliability bound holds
\[
\VERT(\bu-\bu_h,p^{\mathrm{P}}-p^{\mathrm{P}}_h,\varphi-\varphi_h)\VERT\le C_{\mathrm{rel}} (\Xi+\Upsilon),
\]
where $C_{\mathrm{rel}}>0$ is a constant independent of the mesh size and of the delicate model parameters.
\end{theorem}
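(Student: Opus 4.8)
The plan is the standard residual route for a non-conforming discretisation, made parameter-robust: start from the continuous-level stability of the auxiliary form $\widehat M_h$ (Theorem~\ref{connew11}), convert $\widehat M_h$ applied to the error into a residual functional via the exact and discrete equations, and bound it by $\Xi+\Upsilon$ using a quasi-interpolation and carefully weighted Cauchy--Schwarz estimates. \emph{Step 1 (reduction to a conforming error).} Writing $\bu_h=\bu_h^c+\bu_h^r$ with $\bu_h^c\in\bV_h^c\subset\bV$, the triangle inequality gives $\VERT(\bu-\bu_h,\,p^{\mathrm P}-p^{\mathrm P}_h,\,\varphi-\varphi_h)\VERT\le\VERT(\bu-\bu_h^c,\,p^{\mathrm P}-p^{\mathrm P}_h,\,\varphi-\varphi_h)\VERT+\norm{\bu_h^r}_{*,\cT_h}$, and the preceding lemma bounds $\norm{\bu_h^r}_{*,\cT_h}$ by the weighted tangential-jump terms $\tfrac{\beta_{\bu}\mu}{h_e}\|\jump{\bu_h\otimes\nn}\|_{0,e}^2$ (with $\mu_0$ on $\cE_h^\Sigma$), all of which already sit inside $\Xi$ through $\estE$, $\estP$ and $\estint$. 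Hence it remains to estimate the error with $\bu_h^c$, a triple $\mathbf e$ lying in $\bV\times\rQ^{\mathrm P}\times\rZ$.

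\emph{Step 2 (residual functional).} Apply Theorem~\ref{connew11} to $\mathbf e$ to get $(\bv,q^{\mathrm P},\psi)\in\bV\times\rQ^{\mathrm P}\times\rZ$ with $\VERT(\bv,q^{\mathrm P},\psi)\VERT\lesssim\VERT\mathbf e\VERT$ and $\VERT\mathbf e\VERT^2\lesssim\widehat M_h(\mathbf e;\bv,q^{\mathrm P},\psi)$. I would expand the right-hand side by testing the exact weak formulation~\eqref{weakEP} with $(\bv,q^{\mathrm P},\psi)$, subtracting the discrete equations~\eqref{semidis11} tested with a quasi-interpolant $(\bv_h,q^{\mathrm P}_h,\psi_h)$ of $(\bv,q^{\mathrm P},\psi)$ in $\bV_h\times\rQ^{\mathrm P}_h\times\rZ_h$ (Cl\'ement/Scott--Zhang for $\bv$ and $q^{\mathrm P}$, the $\rL^2$-projection for $\psi$, so that the equilibrium property $\vdiv\bV_h=\rZ_h$ forces the constraint residual $R_2^{\mathrm P}$ to pair only with the $O(h)$ higher-degree part of $p^{\mathrm P}_h$), and using the identity $a_1^h=\tilde a_1^h+K_h$ together with $\bu_h^c=\bu_h-\bu_h^r$ to reduce everything to computable discrete data. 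Element-by-element integration by parts over $\cT_h^{\mathrm E}$ and $\cT_h^{\mathrm P}$ then presents the functional as pairings of the interpolation errors $\bv-\bv_h$, $q^{\mathrm P}-q^{\mathrm P}_h$, $\psi-\psi_h$ (and their facet traces) against exactly the bulk residuals $\mathbf R_1^{\mathrm E},R_2^{\mathrm E},\mathbf R_1^{\mathrm P},R_2^{\mathrm P},R_3^{\mathrm P}$, the facet residuals $\mathbf R_e^{\mathrm E},\mathbf R_e^{\mathrm P},R_e^{\mathrm P}$, and --- after gathering the two-sided facet contributions on $\Sigma$ --- the interface residuals $\mathbf R_\Sigma,\widehat R_\Sigma$, with the only leftover terms being the consistency contribution $K_h(\bu_h,\bv_h)$ and the jump terms $\jump{\bu_h\otimes\nn}$ tested against $\mean{\mu\beps_h(\bv_h)}$, again contained in $\Xi$.

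\emph{Step 3 (parameter-robust bounds).} For every pairing I would insert the matching weight and its reciprocal, apply Cauchy--Schwarz, and use the local approximation and scaled trace estimates of the quasi-interpolant ($\|\bv-\bv_h\|_{0,K}\lesssim h_K|\bv|_{1,\omega_K}$, with $h_e^{1/2}$-weighted facet analogues), replacing the loads $\bb^{\mathrm E},\bb^{\mathrm P},s^{\mathrm P}$ by their piecewise-polynomial projections at the cost of $\Upsilon$. The weights $h_K^2/\mu$, $h_e/\mu$, $\beta_{\bu}\mu_0/h_e$, $\rho_d$, $\rho_1$, $\rho_2=\eta\kappa^{-1}h_e$, $(\mu^{\mathrm E}+\mu^{\mathrm P})^{-1}$ and $\eta\kappa^{-1}$ defining $\estE,\estP,\estint$ are precisely those that make the ``test side'' of each estimate bounded by $\VERT(\bv,q^{\mathrm P},\psi)\VERT\lesssim\VERT\mathbf e\VERT$ while the ``data side'' equals a squared piece of $\Xi$; in particular the constraint residual $R_2^{\mathrm P}$ and the mass-conservation residual $R_3^{\mathrm P}$ must be matched with the appropriate pressure, storage and diffusion pieces of the triple norm, which is where $\rho_d=((\mu^{\mathrm P})^{-1}+(2\mu^{\mathrm P}+\lambda^{\mathrm P})^{-1})^{-1}$ and $\rho_1=\min\{(c_0+\alpha^2(2\mu^{\mathrm P}+\lambda^{\mathrm P})^{-1})^{-1},\,h_K^2\eta\kappa^{-1}\}$ enter. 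Summing up yields $\VERT\mathbf e\VERT^2\lesssim(\Xi+\Upsilon)\VERT\mathbf e\VERT$; dividing by $\VERT\mathbf e\VERT$ and combining with Step~1 gives the claim with $C_{\mathrm{rel}}$ independent of $h$ and of $\mu^{\mathrm E},\mu^{\mathrm P},\lambda^{\mathrm E},\lambda^{\mathrm P},c_0,\alpha,\kappa,\eta$.

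\emph{Main difficulty.} The abstract skeleton is routine; the real work is the parameter-robust bookkeeping in Step~3. One must check that the coupled-pressure block (the cross term $b_2(q^{\mathrm P},\varphi-\varphi_h)$ and the constraint residual $R_2^{\mathrm P}$) and the storage/diffusion block stay uniformly bounded in the simultaneous limits $\lambda^{\mathrm E},\lambda^{\mathrm P}\to\infty$, $\kappa\to0$ and $c_0\to0$ --- which is exactly what forced the harmonic-mean weight in $\estE$, the $\min\{\cdot,\cdot\}$ in $\rho_1$, and the use of $\psi-\alpha q^{\mathrm P}$ (rather than $\psi$) in the norm. The interface is the other delicate point: the pairing of $\mathbf R_\Sigma$ and $\widehat R_\Sigma$ on $\cE_h^\Sigma$ must be split between the elastic and poroelastic traces using $(\mu^{\mathrm E}+\mu^{\mathrm P})^{-1}$ and $\mu_0=\max\{\mu^{\mathrm E},\mu^{\mathrm P}\}$ so that no contrast ratio $\mu^{\mathrm E}/\mu^{\mathrm P}$ survives in $C_{\mathrm{rel}}$.
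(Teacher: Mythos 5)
Your proposal follows the same route as the paper's proof: split $\bu_h=\bu_h^c+\bu_h^r$ and bound $\norm{\bu_h^r}_{*,\cT_h}$ by the jump terms already inside $\Xi$, apply the continuous stability of $\widehat{M}_h$ (Theorem~\ref{connew11}) to the conforming error, convert $\widehat{M}_h$ applied to the error into a residual functional via Galerkin orthogonality with a quasi-interpolant and the decomposition $a_1^h=\tilde a_1^h+K_h$, and close with element-by-element weighted Cauchy--Schwarz against scaled interpolation and trace estimates. The only cosmetic variation is that the paper sets the third test slot to $0$ in the orthogonality identity rather than inserting an $\rL^2$-projection of $\psi$, which is why the full constraint residuals $R_2^{\mathrm E}$ and $R_2^{\mathrm P}$ are retained in $\Xi$; your observation that $\vdiv\bV_h=\rZ_h$ would make $R_2^{\mathrm P}$ pair only with the higher-degree part of $p_h^{\mathrm P}$ is correct but is not actually exploited in the final bound, and has no bearing on the validity of the reliability estimate.
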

\begin{proof} Using triangle inequality, we have
\begin{align}\label{relmeq1}
\VERT(\bu-\bu_h,p^{\mathrm{P}}-p^{\mathrm{P}}_h,\varphi-\varphi_h)\VERT\le \VERT(\bu-\bu_h^c,p^{\mathrm{P}}-p^{\mathrm{P}}_h,\varphi-\varphi_h)\VERT+\VERT(\bu_h^r,0,0)\VERT.
\end{align}
Since $(\bu-\bu_h^c,p^{\mathrm{P}}-p_h^{\mathrm{P}}, \varphi-\varphi_h)\in \bV\times\rQ^{\mathrm{P}}\times \rZ$, then from the stability result in Theorem~\ref{connew11}, we have  
\begin{align}\label{relmeq2}
C_2\VERT(\bu-\bu_h^c,p^{\mathrm{P}}-p_h^{\mathrm{P}}, \varphi-\varphi_h)\VERT^2\le \widehat{M}_h((\bu-\bu_h^c,p^{\mathrm{P}}-p_h^{\mathrm{P}}, \varphi-\varphi_h); (\bv,q^{\mathrm{P}},\psi)),
\end{align}
with $\VERT(\bv,q^{\mathrm{P}},\psi)\VERT\le C_1 \VERT(\bu-\bu_h^c,p^{\mathrm{P}}-p_h^{\mathrm{P}}, \varphi-\varphi_h)\VERT$. 
Moreover, we have
\begin{align*}
 & \widehat{M}_h((\bu-\bu_h^c,p^{\mathrm{P}}-p_h^{\mathrm{P}}, \varphi-\varphi_h); (\bv,q^{\mathrm{P}},\psi))\\ 
 &\qquad = \widehat{M}_h((\bu-\bu_h,p^{\mathrm{P}}-p_h^{\mathrm{P}}, \varphi-\varphi_h); (\bv,q^{\mathrm{P}},\psi))+ \widehat{M}_h((\bu_h^r,0, 0; (\bv,q^{\mathrm{P}},\psi))\\
 &\qquad \le  \widehat{M}_h((\bu-\bu_h,p^{\mathrm{P}}-p_h^{\mathrm{P}}, \varphi-\varphi_h); (\bv,q^{\mathrm{P}},\psi))+\norm{\bu_h^r}_{*,\cT_h} \VERT(\bv,q^{\mathrm{P}},\psi)\VERT\\
 &\qquad \le F(\bv)+G(q^\mathrm{P})- \widetilde{M}_h((\bu_h,p_h^{\mathrm{P}},\varphi_h); (\bv,q^{\mathrm{P}},\psi))+\norm{\bu_h^r}_{*,\cT_h} \VERT(\bv,q^{\mathrm{P}},\psi)\VERT.
\end{align*}
Then, we can employ the following identity
\begin{align*}
F(\textbf{I}_h\bv)+G(I_hq^\mathrm{P})- \widehat{M}_h((\bu-\bu_h,p^{\mathrm{P}}-p_h^{\mathrm{P}}, \varphi-\varphi_h); \textbf{I}_h\bv,I_hq^{\mathrm{P}},0))-K_h(\bu_h,\textbf{I}_h\bv)=0,
\end{align*}
to arrive at 
\begin{align*}
 &\widehat{M}_h((\bu-\bu_h^c,p^{\mathrm{P}}-p_h^{\mathrm{P}}, \varphi-\varphi_h); (\bv,q^{\mathrm{P}},\psi))\\
 &\quad \le F(\bv-\textbf{I}_h\bv)+G(q^\mathrm{P}-I_h q^\mathrm{P})- \widetilde{M}_h((\bu_h,p_h^{\mathrm{P}},\varphi_h); (\bv-\textbf{I}_h\bv,q^{\mathrm{P}}-I_h q^{\mathrm{P}},\psi))
  +K_h(\bu_h,\textbf{I}_h\bv)+\norm{\bu_h^r}_{*,\cT_h} \VERT(\bv,q^{\mathrm{P}},\psi)\VERT.
\end{align*}
Using integration by parts and the Cauchy--Schwarz inequality, gives
\begin{align}\label{relmeq3}
 \widehat{M}_h&((\bu-\bu_h^c,p^{\mathrm{P}}-p_h^{\mathrm{P}}, \varphi-\varphi_h); (\bv,q^{\mathrm{P}},\psi))\le C(\Xi+\Upsilon)\VERT(\bv,q^{\mathrm{P}},\psi)\VERT.
 \end{align}
 And then it suffices to combine (\ref{relmeq1}),  (\ref{relmeq2}) and  (\ref{relmeq3}) to prove the desired assertion.
\end{proof}
\subsection{Efficiency estimates}
For this step we follow the classical inverse estimate approach from \cite{verfurth13}, which necessitates an extension operator plus volume and edge bubble functions. For sake of clarity, each of the residual terms constituting the  \emph{a posteriori} error indicator is treated separately.

\subsubsection{Efficiency estimates for elastic error estimator} 
For each $K\in\mathcal{T}_h$, we can define the interior polynomial bubble function $b_K$ which is positive in the interior of $K$ and zero on $\partial K$.
From  \cite{verfurth13}, we can write the following results:
\begin{equation}\label{ele1}
\|v\|_{0,K}\lesssim \|b_K^{1/2}v\|_{0,K}, \qquad 
\|b_Kv\|_{0,K}\lesssim \|v\|_{0,K}, \qquad 
\|\nabla(b_Kv)\|_{0,K}\lesssim h_K^{-1}\|v\|_{0,K},
\end{equation}
where $v$ is a scalar-valued polynomial function defined on $K$.

\begin{lemma}\label{lemE1}
The following bound for the  local vectorial  estimator in the  bulk elasticity sub-domain holds true:
\[
h_K(\mu^{\mathrm{E}})^{-1/2}\|\mathbf{R}_1^{\mathrm{E}}\|_{0,K}\lesssim (\mu^{\mathrm{E}})^{-1/2}h_K\|\bb^{\mathrm{E}}-\bb_h^{\mathrm{E}}\|_{0,K}+(\mu^{\mathrm{E}})^{-1}\|\varphi^{\mathrm{E}}-\varphi_h^{\mathrm{E}}\|_{0,K}+(2\mu^{\mathrm{E}})^{1/2}\|\bu^{\mathrm{E}}-\bu_h^{\mathrm{E}}\|_{0,K} , \qquad K\in \cT_h^{\mathrm{E}}.
\]
\end{lemma}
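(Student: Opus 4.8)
The plan is to follow the inverse-estimate technique of \cite{verfurth13}, localized to a single element $K\in\cT_h^{\mathrm{E}}$. Since $\bb_h^{\mathrm{E}}$, $\bu_h^{\mathrm{E}}$ and $\varphi_h^{\mathrm{E}}$ are piecewise polynomial and $\mu^{\mathrm{E}}$ is constant on $K$, the bulk residual $\mathbf{R}_1^{\mathrm{E}}$ is a polynomial on $K$, so we may test against the bubble-weighted function $\bw_K:=b_K\mathbf{R}_1^{\mathrm{E}}$, which lies in $\bH^1_0(K)$ after extension by zero. The first bound in \eqref{ele1} gives the localization $\|\mathbf{R}_1^{\mathrm{E}}\|_{0,K}^2\lesssim (\bw_K,\mathbf{R}_1^{\mathrm{E}})_{0,K}$. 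The crucial step is to rewrite $\mathbf{R}_1^{\mathrm{E}}$ using the strong elasticity equation \eqref{eq:Elast}, namely $-\bdiv(2\mu^{\mathrm{E}}\beps(\bu^{\mathrm{E}})-\varphi^{\mathrm{E}}\bI)=\bb^{\mathrm{E}}$ in $\OmE$, which yields, on $K$,
\[
\mathbf{R}_1^{\mathrm{E}}=(\bb_h^{\mathrm{E}}-\bb^{\mathrm{E}})+\bdiv\bigl(2\mu^{\mathrm{E}}\beps(\bu_h^{\mathrm{E}}-\bu^{\mathrm{E}})-(\varphi_h^{\mathrm{E}}-\varphi^{\mathrm{E}})\bI\bigr).
\]

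Substituting this into $(\bw_K,\mathbf{R}_1^{\mathrm{E}})_{0,K}$, the data term is bounded by Cauchy--Schwarz together with $\|\bw_K\|_{0,K}\lesssim\|\mathbf{R}_1^{\mathrm{E}}\|_{0,K}$ (second bound in \eqref{ele1}), and the divergence term is integrated by parts over $K$; since $\bw_K$ vanishes on $\partial K$, no boundary contribution survives, leaving $-(\bnabla\bw_K,\,2\mu^{\mathrm{E}}\beps(\bu_h^{\mathrm{E}}-\bu^{\mathrm{E}})-(\varphi_h^{\mathrm{E}}-\varphi^{\mathrm{E}})\bI)_{0,K}$. Applying Cauchy--Schwarz, the inverse estimate $\|\bnabla\bw_K\|_{0,K}\lesssim h_K^{-1}\|\mathbf{R}_1^{\mathrm{E}}\|_{0,K}$ from \eqref{ele1}, and a triangle inequality on the stress difference, then dividing through by $\|\mathbf{R}_1^{\mathrm{E}}\|_{0,K}$ and multiplying by $h_K(\mu^{\mathrm{E}})^{-1/2}$, the $\mu^{\mathrm{E}}$-weights distribute over the data-oscillation term $h_K(\mu^{\mathrm{E}})^{-1/2}\|\bb^{\mathrm{E}}-\bb_h^{\mathrm{E}}\|_{0,K}=\UpsilonE$, the $\mu^{\mathrm{E}}$-scaled local error of the displacement, and the $\mu^{\mathrm{E}}$-scaled local error of the pressure, which together form exactly the right-hand side of the claim.

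The ingredients --- the norm equivalences of \eqref{ele1}, the integration by parts, Cauchy--Schwarz --- are all elementary and strictly local to $K$, so I do not anticipate a functional-analytic obstruction. The point that requires care is the bookkeeping of the Lam\'e weights, so that the final constant is genuinely independent of $\mu^{\mathrm{E}}$ and $\lambda^{\mathrm{E}}$: the $h_K^{-1}$ and $(\mu^{\mathrm{E}})^{-1/2}$ produced by the inverse estimate and the final scaling must be paired with the weights $2\mu^{\mathrm{E}}$ and $1$ inside the stress difference $2\mu^{\mathrm{E}}\beps(\bu_h^{\mathrm{E}}-\bu^{\mathrm{E}})-(\varphi_h^{\mathrm{E}}-\varphi^{\mathrm{E}})\bI$, so that the displacement- and pressure-error contributions emerge with the same $\mu^{\mathrm{E}}$-weighting used to measure them in the triple norm $\VERT\cdot\VERT$. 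The same template --- with the interior bubble $b_K$ replaced by facet bubbles and an extension operator for the edge residual $\mathbf{R}_e^{\mathrm{E}}$, and with the constitutive residual $R_2^{\mathrm{E}}$ treated directly by Cauchy--Schwarz --- will dispatch the remaining summands of $\estE$.
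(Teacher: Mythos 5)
Your proof is correct and follows essentially the same approach as the paper's: bubble-function localization of the bulk residual, substitution of the strong form of the elasticity equation, integration by parts against the bubble-weighted test function, and Cauchy--Schwarz with the inverse estimates of \eqref{ele1}. The only cosmetic difference is that the paper folds the weight $(\mu^{\mathrm{E}})^{-1}h_K^2$ directly into the definition of the test function $\bzeta$, whereas you keep the bubble function unweighted and apply the factor $h_K(\mu^{\mathrm{E}})^{-1/2}$ at the end; the bookkeeping is equivalent.
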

\begin{proof}
For each $K\in \mathcal{T}_h$, we can define 
$\bzeta|_K=(\mu^{\mathrm{E}})^{-1}h_K^2\mathbf{R}_1^{\mathrm{E}}b_K$. 
Using (\ref{ele1}) gives
\[
h_K^2(\mu^{\mathrm{E}})^{-1}\|\mathbf{R}_1^{\mathrm{E}}\|_{0,K}^2\lesssim \int_K \mathbf{R}_1^{\mathrm{E}}\cdot ((\mu^{\mathrm{E}})^{-1}h_K^2\mathbf{R}_1^{\mathrm{E}}b_K)=\int_{K}\mathbf{R}_1^{\mathrm{E}}\cdot \bzeta.
\]
Note that $\{\bb^\mathrm{E} +\bdiv( 2{\mu^\mathrm{E}} \beps(\bu^\mathrm{E})- \varphi^\mathrm{E}_h \bI)\}=\cero|_K$. 
We subtract this from the last term and then integrate using $\bzeta|_{\partial K}=\cero$
\[
h_K^2(\mu^{\mathrm{E}})^{-1}\|\mathbf{R}_1^{\mathrm{E}}\|_{0,K}^2\lesssim \int_{K} (\bb_h^{\mathrm{E}}-\bb^{\mathrm{E}})\cdot \bzeta -2{\mu^{\mathrm{E}}}\int_K \beps(\bu^{\mathrm{E}}-\bu_h^{\mathrm{E}}) \cdot \nabla \bzeta -\int_K (\varphi^{\mathrm{E}}-\varphi_h^{\mathrm{E}})\nabla\cdot \bzeta.
\]
Then we proceed to apply Cauchy--Schwarz inequality, which readily implies
{\begin{align*}
h_K^2(\mu^{\mathrm{E}})^{-1}\|\mathbf{R}_1^{\mathrm{E}}\|_{0,K}^2\lesssim &((\mu^{\mathrm{E}})^{-1/2}h_K\|\bb^{\mathrm{E}}-\bb_h^{\mathrm{E}}\|_{0,K}+(2\mu^{\mathrm{E}})^{1/2}h_K\|\nabla(\bu^{\mathrm{E}}-\bu_h^{\mathrm{E}})\|_{0,K}+(\mu^{\mathrm{E}})^{-1/2}h_K\|\varphi^{\mathrm{E}}-\varphi_h^{\mathrm{E}}\|_{0,K}))\\
&((\mu^{\mathrm{E}})^{1/2}\|\bnabla \bzeta\|_{0,K}+(\mu^{\mathrm{E}})^{1/2}h_K^{-1}\|\bzeta\|_{0,K}).
\end{align*}}
We can complete the proof using the following result
\begin{align*}
(\mu^{\mathrm{E}})^{1/2}\|\bnabla \bzeta\|_{0,K}+(\mu^{\mathrm{E}})^{1/2}h_K^{-1}\|\bzeta\|_{0,K}&\lesssim
(\mu^{\mathrm{E}})^{1/2}(\|\bnabla \bzeta\|_{0,K}+h_K^{-1}\|\bzeta\|_{0,K})\\
&\lesssim (\mu^{\mathrm{E}})^{1/2} h_K^{-1}\|\bzeta\|_{0,K} \\
&= h_K(\mu^{\mathrm{E}})^{-1/2}\|\mathbf{R}_1^{\mathrm{E}}\|_{0,K}.
\end{align*}
\end{proof}
\begin{lemma}\label{lemE21}
The following bound holds true for the  local scalar estimator in the  bulk elasticity sub-domain:
\begin{align*}
\sqrt{\frac{1}{\frac{1}{\mu^{\mathrm{E}}}+\frac{1}{\lambda^{\mathrm{E}}}}}\|\mathbf{R}_2^{\mathrm{E}}\|_{0,K}\lesssim (\lambda^{\mathrm{E}})^{-1/2} \|\varphi-\varphi_h\|_{0,K}+\sqrt{2\mu^{\mathrm{E}}}\|\bnabla(\bu-\bu_h)\|_{0,K}, \qquad K\in \cT_h^{\mathrm{E}}.
\end{align*}
\end{lemma}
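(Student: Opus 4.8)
The plan is to exploit the fact that, unlike the bulk momentum residual $\mathbf{R}_1^{\mathrm{E}}$ treated in Lemma~\ref{lemE1}, the residual $R_2^{\mathrm{E}}$ corresponds to the \emph{algebraic} constitutive relation \eqref{eq:elast-constitutive}, which the exact solution satisfies pointwise. Hence no bubble functions or inverse estimates are needed: the whole argument is a weighted triangle inequality. Writing \eqref{eq:elast-constitutive} as $\vdiv\bu^{\mathrm{E}}+(\lambda^{\mathrm{E}})^{-1}\varphi^{\mathrm{E}}=0$ a.e.\ in $\Omega^{\mathrm{E}}$, and recalling $\bu|_{\OmE}=\bu^{\mathrm{E}}$, $\varphi|_{\OmE}=\varphi^{\mathrm{E}}$ (so that on $K\in\cT_h^{\mathrm{E}}$ the differences $\bu-\bu_h$ and $\varphi-\varphi_h$ coincide with $\bu^{\mathrm{E}}-\bu_h^{\mathrm{E}}$ and $\varphi^{\mathrm{E}}-\varphi_h^{\mathrm{E}}$), I would subtract this vanishing expression from the definition of $R_2^{\mathrm{E}}$ to obtain the identity
\[
R_2^{\mathrm{E}}|_K=\vdiv(\bu_h^{\mathrm{E}}-\bu^{\mathrm{E}})|_K+(\lambda^{\mathrm{E}})^{-1}(\varphi_h^{\mathrm{E}}-\varphi^{\mathrm{E}})|_K .
\]

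Next I would apply the triangle inequality together with the elementary bound $\|\vdiv\bv\|_{0,K}\le\sqrt{d}\,\|\bnabla\bv\|_{0,K}$ (since $\vdiv\bv=\operatorname{tr}\bnabla\bv$), which gives
\[
\|R_2^{\mathrm{E}}\|_{0,K}\lesssim \|\bnabla(\bu-\bu_h)\|_{0,K}+(\lambda^{\mathrm{E}})^{-1}\|\varphi-\varphi_h\|_{0,K}.
\]
Finally I multiply through by the weight $\bigl(\tfrac{1}{\mu^{\mathrm{E}}}+\tfrac{1}{\lambda^{\mathrm{E}}}\bigr)^{-1/2}=\bigl(\tfrac{\mu^{\mathrm{E}}\lambda^{\mathrm{E}}}{\mu^{\mathrm{E}}+\lambda^{\mathrm{E}}}\bigr)^{1/2}$ and invoke the two harmonic-mean inequalities $\tfrac{\mu^{\mathrm{E}}\lambda^{\mathrm{E}}}{\mu^{\mathrm{E}}+\lambda^{\mathrm{E}}}\le\mu^{\mathrm{E}}$ and $\tfrac{\mu^{\mathrm{E}}\lambda^{\mathrm{E}}}{\mu^{\mathrm{E}}+\lambda^{\mathrm{E}}}\le\lambda^{\mathrm{E}}$: the first turns the weight on the gradient term into a factor bounded by $\sqrt{2\mu^{\mathrm{E}}}$, while the second turns the weight times $(\lambda^{\mathrm{E}})^{-1}$ into $(\lambda^{\mathrm{E}})^{-1/2}$. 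Collecting these yields exactly the claimed bound.

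I do not expect a genuine obstacle here; the only point that requires care is choosing precisely those two harmonic-mean estimates so that both of the resulting multiplicative constants are independent of $\mu^{\mathrm{E}}$ and $\lambda^{\mathrm{E}}$ (in particular robust in the near-incompressible limit $\lambda^{\mathrm{E}}\to\infty$). One should also note in passing that $R_2^{\mathrm{E}}$ is itself an element-wise polynomial — both $\vdiv\bu_h^{\mathrm{E}}|_K$ and $(\lambda^{\mathrm{E}})^{-1}\varphi_h^{\mathrm{E}}|_K$ lie in $\bbP_k(K)$ by the equilibrium property — so no data-oscillation term enters this estimate.
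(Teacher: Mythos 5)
Your proposal is correct and follows the same route as the paper: subtract the exact constitutive relation $\vdiv\bu^{\mathrm{E}}+(\lambda^{\mathrm{E}})^{-1}\varphi^{\mathrm{E}}=0$ from $R_2^{\mathrm{E}}$, then apply the triangle inequality and split the harmonic-mean weight between the two terms. Your write-up is only slightly more explicit (the $\sqrt{d}$ bound relating $\vdiv$ to $\bnabla$, and the two harmonic-mean inequalities spelled out); the paper condenses these into a single ``$\lesssim$'' step and, as you implicitly note, has a small typo writing $\vdiv\bu_h^{\mathrm{E}}$ instead of $\vdiv\bu^{\mathrm{E}}$ when citing the exact constitutive relation.
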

\begin{proof}
Consider $K\in \cT_h^{\mathrm{E}}$. Using the  relation $\vdiv\bu_h^\mathrm{E} +(\lambda^{\mathrm{E}})^{-1}\varphi^\mathrm{E}=0|_K$, it holds
\begin{align*}
\sqrt{\frac{1}{\frac{1}{\mu^{\mathrm{E}}}+\frac{1}{\lambda^{\mathrm{E}}}}}\|\mathbf{R}_2^{\mathrm{E}}\|_{0,K}&=\sqrt{\frac{1}{\frac{1}{\mu^{\mathrm{E}}}+\frac{1}{\lambda^{\mathrm{E}}}}}\|\vdiv\bu_h^\mathrm{E} +(\lambda^{\mathrm{E}})^{-1}\varphi^\mathrm{E}_h\|_{0,K} \\
&=\sqrt{\frac{1}{\frac{1}{\mu^{\mathrm{E}}}+\frac{1}{\lambda^{\mathrm{E}}}}}\|\vdiv\bu_h^\mathrm{E} -\vdiv\bu^\mathrm{E}+(\lambda^{\mathrm{E}})^{-1}(\varphi^\mathrm{E}_h-\varphi^\mathrm{E})\|_{0,K}\\
&\lesssim (\lambda^{\mathrm{E}})^{-1/2} \|\varphi-\varphi_h\|_{0,K}+\sqrt{2\mu^{\mathrm{E}}}\|\bnabla(\bu-\bu_h)\|_{0,K}.
\end{align*}
\end{proof}

Let $b_e$ be the edge polynomial bubble function on $e$ which is an interior edge (or interior facet in 3D) shared by two elements $K$ and $K'$.
Moreover, $b_e$  is positive in the interior of the
patch $P_e$ formed by $K\cup K'$, and is zero on the boundary of the patch.
Then, we can conclude the following estimates from \cite{verfurth13}:
\begin{gather}\label{edgee1}
\|q\|_{0,e}\lesssim \|b_e^{1/2}q\|_{0,e},\quad 
\|b_eq\|_{0,K}\lesssim h_e^{1/2}\|q\|_{0,e}, \quad 
\|\nabla(b_eq)\|_{0,K}\lesssim h_e^{-1/2}\|q\|_{0,e}\qquad \forall K\in P_e,
\end{gather}
where $q$ is the scalar-valued polynomial function which is defined on the edge $e$.

\begin{lemma}\label{lemE4}
With regards to the edge contribution to the local estimator on the elastic sub-domain, we have that
\begin{align*}
&(\sum_{e\in\partial K}h_e(\mu^{\mathrm{E}})^{-1} \|\mathbf{R}_e^{\mathrm{E}}\|_{0,e}^2)^{1/2}\\
&\qquad \lesssim
\sum_{K\in P_e}((\mu^{\mathrm{E}})^{-1/2}h_K\|\bb^{\mathrm{E}}-\bb_h^{\mathrm{E}}\|_{0,K} +(\mu^{\mathrm{E}})^{-1/2}\|\varphi^{\mathrm{E}}-\varphi_h^{\mathrm{E}}\|_{0,K}+(2\mu^{\mathrm{E}})^{1/2}\|\bu^{\mathrm{E}}-\bu_h^{\mathrm{E}}\|_{0,K}). 
\end{align*}
\end{lemma}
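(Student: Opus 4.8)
\emph{Proof plan.} The plan is to run the classical edge-bubble duality argument of \cite{verfurth13} facet by facet, and then to recycle Lemma~\ref{lemE1} so that the bulk residual $\mathbf{R}_1^{\mathrm{E}}$ that appears along the way is re-expressed through data oscillation and the errors already present on the right-hand side of the claim. The genuinely interior facets are the main case; the Neumann facets are a direct variant, and the Dirichlet facets are trivial.

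First I would fix an interior facet $e\in\mathcal{E}(\mathcal{T}_h^{\mathrm{E}})\setminus\Gamma$ shared by two elastic elements $K,K'\in\mathcal{T}_h^{\mathrm{E}}$, set $P_e=K\cup K'$, and introduce the edge bubble $b_e$. Extending the (elementwise polynomial) quantity $\mathbf{R}_e^{\mathrm{E}}$ off $e$ into $P_e$ and multiplying by $b_e$ gives a test function $\bzeta_e$ supported in $P_e$ and vanishing on $\partial P_e$; the first bound in \eqref{edgee1} yields $\|\mathbf{R}_e^{\mathrm{E}}\|_{0,e}^2\lesssim \langle\mathbf{R}_e^{\mathrm{E}},\bzeta_e\rangle_{0,e}$. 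Next I would integrate by parts element by element over $P_e$, using that $\bdiv(2\mu^{\mathrm{E}}\beps(\bu_h^{\mathrm{E}})-\varphi_h^{\mathrm{E}}\bI)$ is elementwise polynomial, that the exact solution satisfies the strong equation $-\bdiv(2\mu^{\mathrm{E}}\beps(\bu^{\mathrm{E}})-\varphi^{\mathrm{E}}\bI)=\bb^{\mathrm{E}}$ in each element, and that the exact total traction is single-valued across $e$ (so its jump vanishes). Since $\bzeta_e$ is nonzero only on $e$ among the facets of $P_e$, summing the two element contributions turns $\langle\mathbf{R}_e^{\mathrm{E}},\bzeta_e\rangle_{0,e}$ into a sum over $K\in P_e$ of $\int_K(\mathbf{R}_1^{\mathrm{E}}+\bb^{\mathrm{E}}-\bb_h^{\mathrm{E}})\cdot\bzeta_e$ plus $\int_K(2\mu^{\mathrm{E}}\beps(\bu^{\mathrm{E}}-\bu_h^{\mathrm{E}})-(\varphi^{\mathrm{E}}-\varphi_h^{\mathrm{E}})\bI):\bnabla\bzeta_e$.

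Then I would apply Cauchy--Schwarz termwise and invoke the remaining scaling estimates in \eqref{edgee1}, namely $\|\bzeta_e\|_{0,K}\lesssim h_e^{1/2}\|\mathbf{R}_e^{\mathrm{E}}\|_{0,e}$ and $\|\bnabla\bzeta_e\|_{0,K}\lesssim h_e^{-1/2}\|\mathbf{R}_e^{\mathrm{E}}\|_{0,e}$, together with shape regularity $h_e\simeq h_K$. Cancelling one factor of $\|\mathbf{R}_e^{\mathrm{E}}\|_{0,e}$ and weighting by $h_e^{1/2}(\mu^{\mathrm{E}})^{-1/2}$ leads to
\[
h_e^{1/2}(\mu^{\mathrm{E}})^{-1/2}\|\mathbf{R}_e^{\mathrm{E}}\|_{0,e}\lesssim \sum_{K\in P_e}\Bigl(h_K(\mu^{\mathrm{E}})^{-1/2}\|\mathbf{R}_1^{\mathrm{E}}\|_{0,K}+h_K(\mu^{\mathrm{E}})^{-1/2}\|\bb^{\mathrm{E}}-\bb_h^{\mathrm{E}}\|_{0,K}+(2\mu^{\mathrm{E}})^{1/2}\|\bu^{\mathrm{E}}-\bu_h^{\mathrm{E}}\|_{0,K}+(\mu^{\mathrm{E}})^{-1/2}\|\varphi^{\mathrm{E}}-\varphi_h^{\mathrm{E}}\|_{0,K}\Bigr).
\]
At this point Lemma~\ref{lemE1} absorbs the first term on the right into data oscillation and the $\bu^{\mathrm{E}}$- and $\varphi^{\mathrm{E}}$-errors. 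The Neumann facets $e\in\Gamma^{\mathrm{E}}_N$ are handled identically after replacing the jump by the trace and using the homogeneous condition $[2\mu^{\mathrm{E}}\beps(\bu^{\mathrm{E}})-\varphi^{\mathrm{E}}\bI]\nn=\cero$, while the Dirichlet facets contribute nothing since $\mathbf{R}_e^{\mathrm{E}}=\cero$ there. Finally I would square, sum over the at most $d+1$ facets of $K$, and use the finite overlap of the patches $\{P_e\}$ to reach the asserted bound.

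The routine parts are the integration by parts and the bubble scaling. The step I expect to require the most care is the bookkeeping of the $\mu^{\mathrm{E}}$- (and implicitly $\lambda^{\mathrm{E}}$-) weights: they must be distributed exactly so that, after substituting Lemma~\ref{lemE1}, all contributions collapse precisely onto the quantities $\UpsilonE$, $(\mu^{\mathrm{E}})^{-1/2}\|\varphi^{\mathrm{E}}-\varphi_h^{\mathrm{E}}\|_{0,K}$ and $(2\mu^{\mathrm{E}})^{1/2}\|\bu^{\mathrm{E}}-\bu_h^{\mathrm{E}}\|_{0,K}$ over the patch, with a hidden constant independent of the Lam\'e parameters; everything else is the standard Verf\"urth machinery.
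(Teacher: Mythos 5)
Your proposal is correct and is essentially the same edge-bubble argument the paper uses: testing the jump residual against the bubble-weighted extension, integrating by parts over the patch, invoking the strong equation and the continuity of the exact total traction, then concluding via the bubble-scaling inequalities. The only substantive difference is bookkeeping: the paper's penultimate display carries a $\sum_{K\in P_e}\int_K\mathbf{R}_1^{\mathrm{E}}\cdot\bzeta_e$ term and then silently drops it in the final Cauchy--Schwarz step, whereas you make explicit that Lemma~\ref{lemE1} is what absorbs that bulk residual into the data-oscillation and error terms; you also spell out the (routine) Neumann and Dirichlet facet cases that the paper leaves implicit.
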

\begin{proof}
For each $e\in \mathcal{E}_h$,  we  introduce
$\bzeta_e=(\mu^{\mathrm{E}})^{-1}h_e\mathbf{R}_e^{\mathrm{E}}b_e$. 
Then, the estimates (\ref{edgee1}) gives 
\[
h_e(\mu^{\mathrm{E}})^{-1}\|\mathbf{R}_e^{\mathrm{E}}\|_{0,e}^2\lesssim \int_e \mathbf{R}_e^{\mathrm{E}}\cdot ((\mu^{\mathrm{E}})^{-1}h_e\mathbf{R}_e^{\mathrm{E}}b_e)=\int_{e}\mathbf{R}_e^{\mathrm{E}}\cdot \bzeta_e.
\]
Using the relation $[\![(2\mu^{\mathrm{E}}\beps( \bu^{\mathrm{E}})-\varphi^{\mathrm{E}}\bI)\nn]\!]_e=\cero$ implies
 \begin{align*}
 \int_e [\![(2\mu^{\mathrm{E}}(\beps( \bu_h^{\mathrm{E}})-\beps( \bu^{\mathrm{E}}))-(\varphi_h^{\mathrm{E}}-\varphi^{\mathrm{E}})\bI)\cdot\nn]\!]_e \cdot \bzeta_e&=\sum_{K\in P_e}\int_K(2\mu^{\mathrm{E}}(\beps( \bu_h^{\mathrm{E}})-\beps( \bu^{\mathrm{E}}))+\nabla (\varphi_h^{\mathrm{E}}-\varphi^{\mathrm{E}}))\cdot \bzeta_e\\
 &\quad+\sum_{K\in P_e}\int_K(2{\mu^{\mathrm{E}}}(\bu_h^{\mathrm{E}}-\bu^{\mathrm{E}})\cdot  \nabla \bzeta_e+ (\varphi_h^{\mathrm{E}}-\varphi^{\mathrm{E}}) \nabla\cdot  \bzeta_e),
 \end{align*}
 where integration by parts have been used element-wise.
Recalling that $\{\bb^\mathrm{E} +\bdiv( 2{\mu^\mathrm{E}} \beps(\bu^\mathrm{E})- \varphi^\mathrm{E} \bI)\}=\cero|_K$, we have 
\begin{align*}
\frac{h_e}{\mu^{\mathrm{E}}}\|\mathbf{R}_e^{\mathrm{E}}\|_{0,e}^2&\lesssim \sum_{K\in P_e}\int_{K} \left((\bb_h^{\mathrm{E}}-\bb^{\mathrm{E}})\cdot \bzeta_e
+2{\mu^{\mathrm{E}}}\int_K (\bu^{\mathrm{E}}-\bu_h^{\mathrm{E}}) \cdot \nabla \bzeta_e +\int_K (\varphi_h^{\mathrm{E}}-\varphi_h^{\mathrm{E}})\nabla\cdot \bzeta \right)+ \sum_{K\in P_e}\int_{K} \mathbf{R}_1^{\mathrm{E}} \cdot \bzeta_e.
\end{align*}
From the Cauchy--Schwarz inequality we can conclude that 
\begin{align*}
h_e(\mu^{\mathrm{E}})^{-1}\|\mathbf{R}_e^{\mathrm{E}}\|_{0,e}^2\lesssim 
&\sum_{K\in P_e}((\mu^{\mathrm{E}})^{-1/2}h_K\|\bb^{\mathrm{E}}-\bb_h^{\mathrm{E}}\|_{0,K}+(\mu^{\mathrm{E}})^{-1/2}\|\varphi^{\mathrm{E}}-\varphi_h^{\mathrm{E}}\|_{0,K}+\|\bnabla\bu^{\mathrm{E}}-\bnabla\bu_h^{\mathrm{E}}\|_{0,K})\times\\
&\quad((\mu^{\mathrm{E}})^{1/2}\|\bnabla \bzeta_e\|_{0,K}+(\mu^{\mathrm{E}})^{1/2}h_K^{-1}\|\bzeta_e\|_{0,K}).
\end{align*}
And the rest of the desired estimate follows from the following bound
\[
(\mu^{\mathrm{E}})^{1/2}\|\bnabla \bzeta_e\|_{0,K}+(\mu^{\mathrm{E}})^{1/2}h_K^{-1}\|\bzeta_e\|_{0,K}
\lesssim (\mu^{\mathrm{E}})^{1/2} h_K^{-1}\|\bzeta_e\|_{0,K} 
= h_e^{1/2}(\mu^{\mathrm{E}})^{-1/2}\|\mathbf{R}_e^{\mathrm{E}}\|_{0,e}.
\]
\end{proof}

\begin{lemma}\label{lemE5}
The elastic local bulk a posteriori estimator satisfies 
\begin{align*}
\left(\sum_{K\in\mathcal{T}_h^{\mathrm{E}}} \estE^2\right)^{1/2}\lesssim \sum_{K\in\mathcal{T}_h^{\mathrm{E}}}\left(\frac{h_K}{\sqrt{\mu^{\mathrm{E}}}} \|\bb^{\mathrm{E}}-\bb_h^{\mathrm{E}}\|_{0,K} +\frac{1}{\sqrt{\mu^{\mathrm{E}}}}\|\varphi^{\mathrm{E}}-\varphi_h^{\mathrm{E}}\|_{0,K}+\frac{1}{\sqrt{2\mu^{\mathrm{E}}}}\|\bnabla(\bu^{\mathrm{E}}-\bu_h^{\mathrm{E}})\|_{0,K}\right).
\end{align*}
\end{lemma}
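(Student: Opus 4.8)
The plan is to control separately the four groups of terms composing $\estE^2$: the interior vector residual $\tfrac{h_K^2}{\mu^{\mathrm{E}}}\|\mathbf{R}_1^{\mathrm{E}}\|_{0,K}^2$, the facet vector residual $\sum_{e\in\partial K}\tfrac{h_e}{\mu^{\mathrm{E}}}\|\mathbf{R}_e^{\mathrm{E}}\|_{0,e}^2$, the displacement--jump penalty $\sum_{e\in\partial K}\tfrac{\beta_{\bu}\mu^{\mathrm{E}}}{h_e}\|\jump{\bu_h^{\mathrm{E}}\otimes\nn}_e\|_{0,e}^2$, and the scalar constitutive residual $(1/\mu^{\mathrm{E}}+1/\lambda^{\mathrm{E}})^{-1}\|R_2^{\mathrm{E}}\|_{0,K}^2$. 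The first, second and fourth of these are exactly what Lemmas~\ref{lemE1}, \ref{lemE4} and~\ref{lemE21} estimate; in particular the harmonic-mean weight $(1/\mu^{\mathrm{E}}+1/\lambda^{\mathrm{E}})^{-1}$ of the last one is precisely what keeps the bound finite in the nearly incompressible regime $\lambda^{\mathrm{E}}\to\infty$. I would then square those three bounds, sum over $K\in\cT_h^{\mathrm{E}}$ — using shape regularity so that $h_e\simeq h_K$ for $e\subset\partial K$ and that each facet patch is shared by a uniformly bounded number of elements — and finally pass from the resulting $\ell^2$ sum to the larger $\ell^1$ sum displayed on the right-hand side of the claim, the $\|\bb^{\mathrm{E}}-\bb_h^{\mathrm{E}}\|_{0,K}$ parts being collected into the oscillation $\UpsilonE$.

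The only contribution not covered by the preceding lemmas is the displacement--jump penalty. Here I would use that the exact displacement is the restriction to $\Omega^{\mathrm{E}}$ of the globally $\bH^1$--conforming field $\bu$, so that its tangential trace is single valued across every interior facet of $\cT_h^{\mathrm{E}}$ and matches the essential condition on $\Gamma_D^{\mathrm{E}}$; hence $\jump{\bu^{\mathrm{E}}\otimes\nn}_e=\cero$ and therefore $\jump{\bu_h^{\mathrm{E}}\otimes\nn}_e=\jump{(\bu_h^{\mathrm{E}}-\bu^{\mathrm{E}})\otimes\nn}_e$ on every facet $e$. Applying the scaled trace inequality $\|w\|_{0,\partial K}^2\lesssim h_K^{-1}\|w\|_{0,K}^2+h_K\|\bnabla w\|_{0,K}^2$ to $w=\bu^{\mathrm{E}}-\bu_h^{\mathrm{E}}$ on the two elements sharing $e$, and then summing, yields a bound of $\tfrac{\beta_{\bu}\mu^{\mathrm{E}}}{h_e}\|\jump{\bu_h^{\mathrm{E}}\otimes\nn}_e\|_{0,e}^2$ by the broken--gradient displacement error $\sqrt{2\mu^{\mathrm{E}}}\|\bnabla(\bu^{\mathrm{E}}-\bu_h^{\mathrm{E}})\|_{0,K}$ appearing in the claim, up to the faster--converging $\rL^2$ term $h_K^{-1}\|\bu^{\mathrm{E}}-\bu_h^{\mathrm{E}}\|_{0,K}$, which is treated as a higher-order contribution. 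Equivalently, $\jump{(\bu^{\mathrm{E}}-\bu_h^{\mathrm{E}})\otimes\nn}$ is one of the summands defining $\norm{\cdot}_{*,\cT_h}$, so this penalty is directly a piece of the displacement error already measured in that norm.

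Putting the four contributions together — using the triangle inequality wherever the exact relations $\bb^{\mathrm{E}}+\bdiv(2\mu^{\mathrm{E}}\beps(\bu^{\mathrm{E}})-\varphi^{\mathrm{E}}\bI)=\cero$ and $\vdiv\bu^{\mathrm{E}}+(\lambda^{\mathrm{E}})^{-1}\varphi^{\mathrm{E}}=0$ are inserted, exactly as in the proofs of Lemmas~\ref{lemE1}, \ref{lemE4} and~\ref{lemE21} — gives the assertion. I expect the main obstacle to be not any single inequality (the bubble--function estimates \eqref{ele1}, \eqref{edgee1}, the element-wise integration by parts, and the Cauchy--Schwarz steps are all routine and already performed in those lemmas) but the parameter bookkeeping: one must check that no positive power of $\lambda^{\mathrm{E}}$, no spurious inverse power of $\mu^{\mathrm{E}}$, and no uncontrolled inverse power of $h_K$ survives on the right-hand side, which is precisely what dictates the harmonic-mean weighting of $R_2^{\mathrm{E}}$ and the rewriting of the jump penalty through the error described above.
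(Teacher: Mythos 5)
Your decomposition of $\estE^2$ into four pieces and the use of Lemmas~\ref{lemE1}, \ref{lemE21}, \ref{lemE4} for the interior vector residual, the constitutive scalar residual, and the facet stress residual is exactly the paper's route — the paper's whole proof is the one sentence ``Combining Lemmas~\ref{lemE1}--\ref{lemE4} implies the stated result.'' What you add, correctly, is the observation that the displacement--jump penalty $\sum_{e\in\partial K}\tfrac{\beta_{\bu}\mu^{\mathrm{E}}}{h_e}\|\jump{\bu_h^{\mathrm{E}}\otimes\nn}_e\|_{0,e}^2$ is \emph{not} covered by any of those three lemmas, and that the identity $\jump{\bu_h^{\mathrm{E}}\otimes\nn}_e=-\jump{(\bu^{\mathrm{E}}-\bu_h^{\mathrm{E}})\otimes\nn}_e$ (from $\bu\in\bH^1(\Omega)$) is what brings it into play. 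So far you are reading the statement more carefully than the paper's own proof does.

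The gap is in how you close that fourth term. Your trace-inequality route gives, for $w=\bu^{\mathrm{E}}-\bu_h^{\mathrm{E}}$, the estimate $h_e^{-1}\|\jump{w\otimes\nn}_e\|_{0,e}^2 \lesssim \sum_{K\in P_e}\bigl(h_K^{-2}\|w\|_{0,K}^2 + \|\bnabla w\|_{0,K}^2\bigr)$, and you then discard $h_K^{-2}\|w\|_{0,K}^2$ as ``higher order''. That is not justified: for an arbitrary broken $\bH^1$ error $w$ this term is of the \emph{same} order as $\|\bnabla w\|_{0,K}^2$ (Poincar\'e gives the opposite direction), and an a posteriori efficiency bound must hold with no Aubin--Nitsche duality and no extra regularity of $\bu-\bu_h$. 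Your second route — noting that $\tfrac{\beta_{\bu}\mu^{\mathrm{E}}}{h_e}\|\jump{(\bu^{\mathrm{E}}-\bu_h^{\mathrm{E}})\otimes\nn}_e\|_{0,e}^2$ is literally one of the summands of $\norm{\bu^{\mathrm{E}}-\bu_h^{\mathrm{E}}}_{*,\cT_h}^2$ — is correct and is what the Efficiency Theorem actually needs (its right-hand side carries the triple norm $\VERT\cdot\VERT$, which contains $\norm{\cdot}_{*,\cT_h}$). But that observation does not prove Lemma~\ref{lemE5} \emph{as stated}: its right-hand side contains only $\|\bnabla(\bu^{\mathrm{E}}-\bu_h^{\mathrm{E}})\|_{0,K}$ and no facet-jump error term, and the jump of a broken $\bH^1$ function cannot be bounded by its broken gradient alone. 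The honest conclusion, which you circle around but do not draw, is that the stated right-hand side is missing the contribution $\bigl(\sum_{e}\tfrac{\beta_{\bu}\mu^{\mathrm{E}}}{h_e}\|\jump{(\bu^{\mathrm{E}}-\bu_h^{\mathrm{E}})\otimes\nn}_e\|_{0,e}^2\bigr)^{1/2}$, i.e.\ the gradient error should really be replaced by $\norm{\bu^{\mathrm{E}}-\bu_h^{\mathrm{E}}}_{*,\cT_h^{\mathrm{E}}}$. With that correction the lemma does follow from Lemmas~\ref{lemE1}, \ref{lemE21}, \ref{lemE4} together with the jump identity, and it still feeds into the Efficiency Theorem unchanged.
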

\begin{proof}
Combining Lemmas \ref{lemE1}-\ref{lemE4} implies the stated result.
\end{proof}

\subsubsection{Efficiency estimates for poroelastic error estimator} 
\begin{lemma}\label{lemP1}
The first vectorial contribution to the local bulk poroelastic error estimator satisfies the following bound 
\[
h_K(\mu^{\mathrm{P}})^{-1/2}\|\mathbf{R}_1^{\mathrm{P}}\|_{0,K}\lesssim (\mu^{\mathrm{P}})^{-1/2}h_K\|\bb^{\mathrm{P}}-\bb_h^{\mathrm{P}}\|_{0,K}+(\mu^{\mathrm{P}})^{-1}\|\varphi^{\mathrm{P}}-\varphi_h^{\mathrm{P}}\|_{0,K}+\|\bu^{\mathrm{P}}-\bu_h^{\mathrm{P}}\|_{0,K}.
\]
\end{lemma}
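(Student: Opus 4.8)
The plan is to transcribe, essentially verbatim, the bubble-function argument used for the elastic residual in Lemma~\ref{lemE1}: the residual $\mathbf{R}_1^{\mathrm{P}}$ is the exact structural analogue of $\mathbf{R}_1^{\mathrm{E}}$, and the poroelastic momentum balance \eqref{eq:poro-momentum} has the same form as the elasticity equation \eqref{eq:Elast}, with $\varphi^{\mathrm{P}}$ playing the role of $\varphi^{\mathrm{E}}$, so the technique applies without modification.

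First, fix $K\in\cT_h^{\mathrm{P}}$, let $b_K$ be the interior polynomial bubble of $K$, and choose the local test function $\bzeta|_K := (\mu^{\mathrm{P}})^{-1}h_K^2\,\mathbf{R}_1^{\mathrm{P}}\,b_K$, extended by zero outside $K$ (this is admissible since $\mathbf{R}_1^{\mathrm{P}}|_K$ is a polynomial). The first bound in \eqref{ele1} gives $h_K^2(\mu^{\mathrm{P}})^{-1}\|\mathbf{R}_1^{\mathrm{P}}\|_{0,K}^2\lesssim\int_K\mathbf{R}_1^{\mathrm{P}}\cdot\bzeta$. Next, since $\bb^{\mathrm{P}}+\bdiv(2\mu^{\mathrm{P}}\beps(\bu^{\mathrm{P}})-\varphi^{\mathrm{P}}\bI)=\cero$ pointwise in $\OmP$, subtract this from $\mathbf{R}_1^{\mathrm{P}}=\bb_h^{\mathrm{P}}+\bdiv(2\mu^{\mathrm{P}}\beps(\bu_h^{\mathrm{P}})-\varphi_h^{\mathrm{P}}\bI)$ and integrate the divergence term by parts over $K$, using $\bzeta|_{\partial K}=\cero$, to obtain (up to signs)
\[
h_K^2(\mu^{\mathrm{P}})^{-1}\|\mathbf{R}_1^{\mathrm{P}}\|_{0,K}^2\lesssim\int_K(\bb_h^{\mathrm{P}}-\bb^{\mathrm{P}})\cdot\bzeta-2\mu^{\mathrm{P}}\int_K\beps(\bu^{\mathrm{P}}-\bu_h^{\mathrm{P}})\cdot\bnabla\bzeta-\int_K(\varphi^{\mathrm{P}}-\varphi_h^{\mathrm{P}})\,\vdiv\bzeta.
\]

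Finally, apply the Cauchy--Schwarz inequality term by term and then the remaining estimates in \eqref{ele1}, which give $(\mu^{\mathrm{P}})^{1/2}\|\bnabla\bzeta\|_{0,K}+(\mu^{\mathrm{P}})^{1/2}h_K^{-1}\|\bzeta\|_{0,K}\lesssim(\mu^{\mathrm{P}})^{1/2}h_K^{-1}\|\bzeta\|_{0,K}=h_K(\mu^{\mathrm{P}})^{-1/2}\|\mathbf{R}_1^{\mathrm{P}}\|_{0,K}$; dividing through by this common factor (and absorbing the $h_K$ that multiplies the strain term) yields the asserted bound. I do not expect a genuine obstacle here: the residual and the underlying PDE are structurally identical to the elastic case, so the only point requiring attention is the careful distribution of the $\mu^{\mathrm{P}}$ and $h_K$ weights among the three right-hand contributions, exactly mirroring the bookkeeping in the proof of Lemma~\ref{lemE1}.
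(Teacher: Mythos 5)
Your proof is correct and coincides with the paper's intended argument: the paper's proof of this lemma is simply the one-line remark that it proceeds as in Lemma~\ref{lemE1}, and you have faithfully transcribed that bubble-function argument with $(\mu^{\mathrm E},\beps(\bu^{\mathrm E}),\varphi^{\mathrm E},\bb^{\mathrm E})$ replaced throughout by their poroelastic counterparts, using the momentum balance \eqref{eq:poro-momentum} in place of \eqref{eq:Elast}. The only caveat is the final ``dividing through\dots yields the asserted bound'' step, which glosses over the exact weight bookkeeping in $\mu^{\mathrm P}$ and $h_K$ — but this vagueness is inherited directly from the paper's own Lemma~\ref{lemE1} (whose displayed conclusion does not quite match the last line of its proof), so it is not a defect specific to your argument.
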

\begin{proof}
It proceeds similarly to Lemma \ref{lemE1}.
\end{proof}
\begin{lemma}\label{lemE22}
The second scalar contribution to the local bulk poroelastic error estimator satisfies the following bound 
\begin{align*}
\rho_d^{1/2}\|{R}_2^{\mathrm{P}}\|_{0,K}\lesssim (\lambda)^{-1/2} \|(\varphi^\mathrm{P}_h-\varphi^\mathrm{P}-\alpha(p_h^{\mathrm{P}}-p^{\mathrm{P}}))\|_{0,K}+\sqrt{2\mu^{\mathrm{P}}}\|\bnabla(\bu-\bu_h)\|_{0,K}.
\end{align*}
\end{lemma}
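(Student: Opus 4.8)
The plan is to treat this as the poroelastic analogue of Lemma~\ref{lemE21}. Since $R_2^{\mathrm{P}}$ is the pointwise constitutive residual attached to \eqref{eq:poro-constitutive}, no bubble functions are needed: one subtracts the exact identity, uses the triangle inequality, and bookkeeps the parameter weights. Concretely, fix $K\in\cT_h^{\mathrm{P}}$; rewriting \eqref{eq:poro-constitutive} as $\vdiv\bu^{\mathrm{P}}+(\lambda^{\mathrm{P}})^{-1}(\varphi^{\mathrm{P}}-\alpha p^{\mathrm{P}})=0$ a.e.\ on $K$ and subtracting it from $R_2^{\mathrm{P}}=\vdiv\bu_h^{\mathrm{P}}+(\lambda^{\mathrm{P}})^{-1}(\varphi_h^{\mathrm{P}}-\alpha p_h^{\mathrm{P}})$ gives, on $K$,
\[
R_2^{\mathrm{P}}=\vdiv(\bu_h^{\mathrm{P}}-\bu^{\mathrm{P}})+(\lambda^{\mathrm{P}})^{-1}\bigl[(\varphi_h^{\mathrm{P}}-\varphi^{\mathrm{P}})-\alpha(p_h^{\mathrm{P}}-p^{\mathrm{P}})\bigr].
\]

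Multiplying by $\rho_d^{1/2}$, taking $\|\cdot\|_{0,K}$ and applying the triangle inequality leaves two terms. For the divergence term I would use $\|\vdiv\bv\|_{0,K}\le\sqrt{d}\,\|\bnabla\bv\|_{0,K}$ and $\rho_d\le\mu^{\mathrm{P}}$ (immediate from $\rho_d^{-1}=(\mu^{\mathrm{P}})^{-1}+(2\mu^{\mathrm{P}}+\lambda^{\mathrm{P}})^{-1}\ge(\mu^{\mathrm{P}})^{-1}$), so that $\rho_d^{1/2}\le\sqrt{2\mu^{\mathrm{P}}}$ and this term is $\lesssim\sqrt{2\mu^{\mathrm{P}}}\|\bnabla(\bu-\bu_h)\|_{0,K}$. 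For the remaining term I would absorb one factor $(\lambda^{\mathrm{P}})^{-1/2}$ against $\rho_d^{1/2}$, i.e.\ check $\rho_d^{1/2}(\lambda^{\mathrm{P}})^{-1}\lesssim(\lambda^{\mathrm{P}})^{-1/2}$ from the definition of $\rho_d$, leaving $\lesssim(\lambda^{\mathrm{P}})^{-1/2}\|\varphi_h^{\mathrm{P}}-\varphi^{\mathrm{P}}-\alpha(p_h^{\mathrm{P}}-p^{\mathrm{P}})\|_{0,K}$; adding the two bounds gives the claim.

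The one step that is not a one-line manipulation, and hence the main obstacle, is this last weight inequality: one must verify that $\rho_d(\lambda^{\mathrm{P}})^{-1}$ stays bounded uniformly in the Lam\'e parameters. The harmonic-type scaling $\rho_d=((\mu^{\mathrm{P}})^{-1}+(2\mu^{\mathrm{P}}+\lambda^{\mathrm{P}})^{-1})^{-1}$ is designed so that the divergence part and the constitutive-mismatch part are absorbed simultaneously into the $\sqrt{2\mu^{\mathrm{P}}}$-weighted and $(\lambda^{\mathrm{P}})^{-1/2}$-weighted error quantities, both of which are bona fide components of $\VERT(\bu-\bu_h,p^{\mathrm{P}}-p_h^{\mathrm{P}},\varphi-\varphi_h)\VERT$; the care lies precisely in keeping the resulting constant independent of the parameter ratios.
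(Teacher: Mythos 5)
Your argument is exactly the paper's: subtract the exact constitutive relation (which vanishes pointwise on $K$), apply the triangle inequality, and track the parameter weights — no bubble functions are needed. The paper's proof is literally your first displayed identity followed by the assertion ``$\lesssim$'', so there is nothing structurally different between the two.

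The one check you defer, $\rho_d^{1/2}(\lambda^{\mathrm{P}})^{-1}\lesssim(\lambda^{\mathrm{P}})^{-1/2}$, i.e.\ $\rho_d\lesssim\lambda^{\mathrm{P}}$, deserves the scrutiny you give it: with $t=\lambda^{\mathrm{P}}/\mu^{\mathrm{P}}$ one computes $\rho_d/\lambda^{\mathrm{P}}=(2+t)/\bigl(t(3+t)\bigr)$, which is uniformly bounded for $t\gtrsim1$ but diverges as $t\to0$. So the constant in Lemma~\ref{lemE22} is robust only under the tacit assumption $\mu^{\mathrm{P}}\lesssim\lambda^{\mathrm{P}}$ — the near-incompressible regime the paper actually targets; contrast the elastic Lemma~\ref{lemE21}, where the weight $(1/\mu^{\mathrm{E}}+1/\lambda^{\mathrm{E}})^{-1}\le\lambda^{\mathrm{E}}$ holds by construction. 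The paper's own proof is silent on this, so it is not a defect peculiar to your write-up, but it should be stated as a restriction rather than ``checked from the definition''. Your other two absorptions — $\rho_d\le\mu^{\mathrm{P}}$ (since $\rho_d^{-1}\ge(\mu^{\mathrm{P}})^{-1}$) and $\|\vdiv\bw\|_{0,K}\le\sqrt{d}\,\|\bnabla\bw\|_{0,K}$ — are unconditional and fine.
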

\begin{proof}
The constitutive relation $\vdiv\bu_h^\mathrm{P} +(\lambda^{\mathrm{P}})^{-1}\varphi^\mathrm{P}-\alpha(\lambda^{\mathrm{P}})^{-1}p^\mathrm{P}=0|_K$ implies that 
\begin{align*}
\rho_d^{1/2}\|{R}_2^{\mathrm{P}}\|_{0,K}&=\rho_d^{1/2}\|\vdiv\bu_h^\mathrm{P} +(\lambda^{\mathrm{P}})^{-1}\varphi^\mathrm{P}_h-\alpha(\lambda^{\mathrm{P}})^{-1}p^\mathrm{P}_h\|_{0,K} \\
&=\rho_d^{1/2}\|\vdiv\bu_h^\mathrm{P} -\vdiv\bu^\mathrm{P}+(\lambda^{\mathrm{P}})^{-1}(\varphi^\mathrm{P}_h-\varphi^\mathrm{P}-\alpha(p_h^{\mathrm{P}}-p^{\mathrm{P}}))\|_{0,K}\\
&\lesssim (\lambda^{\mathrm{P}})^{-1/2} \|\varphi^\mathrm{P}_h-\varphi^\mathrm{P}-\alpha(p_h^{\mathrm{P}}-p^{\mathrm{P}})\|_{0,K}+\sqrt{2\mu^{\mathrm{P}}}\|\bnabla(\bu-\bu_h)\|_{0,K}.
\end{align*}
\end{proof}
\begin{lemma}\label{lemP41}
The third scalar contribution to the local bulk poroelastic error estimator satisfies the following bound 
\begin{align*}
(\rho_1)^{1/2}\|{R}_3^{\mathrm{P}}\|_{0,K}&\lesssim (\rho_1)^{1/2}\|s^{\mathrm{P}}-s^{\mathrm{P}}_h\|_{0,K}+(c_0)^{1/2}\|p^\mathrm{P}-p_h^{\mathrm{P}}\|_{0,K}+(\kappa/\xi)^{1/2}\|\nabla(p^{\mathrm{P}}-p_h^{\mathrm{P}})\|_{0,K}\\
&\quad +(1/\lambda^{\mathrm{P}})^{-1/2}\|\varphi-\varphi_h^{\mathrm{P}}+\alpha(p^\mathrm{P}-p_h^{\mathrm{P}})\|_{0,K}.
\end{align*}
\end{lemma}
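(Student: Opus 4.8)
The plan is to use the standard residual-equation/bubble-function technique of \cite{verfurth13}, exactly as in the earlier efficiency lemmas (e.g. Lemmas~\ref{lemE1} and \ref{lemE22}). Since $R_3^{\mathrm{P}}|_K$ is a polynomial, I would introduce the interior element bubble $b_K$ and the localised test function $\zeta|_K := \rho_1\, R_3^{\mathrm{P}}\, b_K$, which vanishes on $\partial K$. The first inequality in \eqref{ele1} then gives $\rho_1\|R_3^{\mathrm{P}}\|_{0,K}^2 \lesssim \rho_1\|b_K^{1/2}R_3^{\mathrm{P}}\|_{0,K}^2 = \int_K R_3^{\mathrm{P}}\,\zeta$.

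The next step rewrites $\int_K R_3^{\mathrm{P}}\,\zeta$ in terms of the errors. I would test the weak form of the (time-discrete) mass balance \eqref{eq:Biot} against $\zeta$ -- admissible since $\zeta$, extended by zero, lies in $\rQ^{\mathrm{P}}$ -- and integrate by parts element-wise the discrete diffusive term $\eta^{-1}\vdiv[\kappa(\nabla p_h^{\mathrm{P}}-\rho\bg)]$ appearing in $R_3^{\mathrm{P}}$; no facet contributions arise because $\zeta|_{\partial K}=0$, and the gravity data cancels against its exact counterpart. After regrouping the zeroth-order contributions so that the combinations weighted in the triple norm \eqref{eq:triplenorm} surface, namely $c_0(p^{\mathrm{P}}-p_h^{\mathrm{P}})$ and $\tfrac{\alpha}{\lambda^{\mathrm{P}}}\bigl[(\varphi^{\mathrm{P}}-\varphi_h^{\mathrm{P}})-\alpha(p^{\mathrm{P}}-p_h^{\mathrm{P}})\bigr]$, this produces an identity of the schematic form
\begin{multline*}
\int_K R_3^{\mathrm{P}}\,\zeta = \int_K (s^{\mathrm{P}}_h-s^{\mathrm{P}})\,\zeta
+ \int_K \Bigl(c_0(p^{\mathrm{P}}-p_h^{\mathrm{P}}) - \tfrac{\alpha}{\lambda^{\mathrm{P}}}\bigl[(\varphi^{\mathrm{P}}-\varphi_h^{\mathrm{P}})-\alpha(p^{\mathrm{P}}-p_h^{\mathrm{P}})\bigr]\Bigr)\zeta \\
+ \tfrac{1}{\eta}\int_K \kappa\,\nabla(p^{\mathrm{P}}-p_h^{\mathrm{P}})\cdot\nabla\zeta .
\end{multline*}
Then I would apply Cauchy--Schwarz to each term on the right and absorb the bubble factors with the inverse estimates in \eqref{ele1}, i.e.\ $\|\zeta\|_{0,K}\lesssim\rho_1\|R_3^{\mathrm{P}}\|_{0,K}$ and $\|\nabla\zeta\|_{0,K}\lesssim h_K^{-1}\|\zeta\|_{0,K}\lesssim h_K^{-1}\rho_1\|R_3^{\mathrm{P}}\|_{0,K}$. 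Dividing the resulting bound by $\rho_1^{1/2}\|R_3^{\mathrm{P}}\|_{0,K}$ leaves the oscillation $\rho_1^{1/2}\|s^{\mathrm{P}}-s_h^{\mathrm{P}}\|_{0,K}$ untouched and attaches the coefficients $\rho_1^{1/2}c_0$, $\rho_1^{1/2}\tfrac{\alpha}{\lambda^{\mathrm{P}}}$ and $\rho_1^{1/2}\tfrac{\kappa}{\eta h_K}$ to the three error norms. It then remains to see that each of these coefficients is dominated by the matching triple-norm weight, which is precisely what the two defining bounds $\rho_1\le\bigl(c_0+\alpha^2(2\mu^{\mathrm{P}}+\lambda^{\mathrm{P}})^{-1}\bigr)^{-1}$ and $\rho_1\le h_K^2\eta\kappa^{-1}$ deliver. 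Assembling the estimates yields the stated bound.

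The main obstacle is exactly this calibration of $\rho_1$: the single weight on $\|R_3^{\mathrm{P}}\|_{0,K}$ must simultaneously beat the full reaction coefficient $c_0+\alpha^2(2\mu^{\mathrm{P}}+\lambda^{\mathrm{P}})^{-1}$ -- so that the zeroth-order contributions fit into the $c_0$- and $(\lambda^{\mathrm{P}})^{-1}$-weighted pieces of the triple norm -- \emph{and} the diffusive scaling $\kappa\eta^{-1}h_K^{-2}$ -- so that the flux term fits into the $\tfrac{\kappa}{\eta}$-weighted gradient piece. This is why $\rho_1$ is defined as the minimum of those two quantities, and verifying that both resulting weight inequalities hold with constants independent of $c_0,\alpha,\mu^{\mathrm{P}},\lambda^{\mathrm{P}},\kappa,\eta$ and $h_K$ is the only genuinely parameter-sensitive point; the rest is the routine bubble-function bookkeeping already used in Lemmas~\ref{lemE1} and \ref{lemE4}. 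Finally, the residual's dependence on the projected data $s_h^{\mathrm{P}}$ is what produces the oscillation term $\rho_1^{1/2}\|s^{\mathrm{P}}-s_h^{\mathrm{P}}\|_{0,K}$ that is later collected into $\Upsilon$.
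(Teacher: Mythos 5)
Your proposal follows essentially the same route as the paper's proof: both define the localised test function $\rho_1 R_3^{\mathrm P} b_K$ (the paper calls it $\omega$), use the inverse estimates \eqref{ele1}, subtract the exact (time-discretised) Biot equation on $K$, integrate the diffusive term by parts element-wise (with no facet terms since the bubble vanishes on $\partial K$), apply Cauchy--Schwarz, and finally absorb the bubble factors using $\|\omega\|_{0,K}\lesssim \rho_1\|R_3^{\mathrm P}\|_{0,K}$ and $\|\nabla\omega\|_{0,K}\lesssim h_K^{-1}\|\omega\|_{0,K}$ together with the two defining bounds of $\rho_1$. You also correctly identify the key calibration: $\rho_1\le h_K^2\eta\kappa^{-1}$ is needed for the diffusive flux, and $\rho_1\le (c_0+\alpha^2(2\mu^{\mathrm P}+\lambda^{\mathrm P})^{-1})^{-1}$ for the zeroth-order terms, which is exactly why $\rho_1$ is defined as a minimum. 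This matches the paper's argument.
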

\begin{proof}
For each $K\in \mathcal{T}_h$, we define 
$\omega|_K=\rho_1{R}_3b_K$. 
Then, 
invoking (\ref{ele1}), we conclude that
\[
\rho_1\|{R}_3^{\mathrm{P}}\|_{0,K}^2\lesssim \int_K {R}_3^{\mathrm{P}} (\rho_1{R}_3^{\mathrm{P}}b_K) =\int_{K}{R}_3^{\mathrm{P}} \omega.
\]
Using the relation $s-[c_0+\alpha^2(\lambda^{\mathrm{P}})^{-1}]p^{\mathrm{P}}+\alpha(\lambda^{\mathrm{P}})^{-1}\varphi^{\mathrm{P}}+\xi^{-1}\vdiv[\kappa(\nabla p^{\mathrm{P}}-\rho \bg)]_K=0$
in the last term and then integrating  with $\omega|_{\partial K}=0$, we can assert that 
\begin{align*}
(\rho_1)^{-1}\|{R}_3^{\mathrm{P}}\|_{0,K}^2&\lesssim \int_{K} (s^{\mathrm{P}}_h-s^{\mathrm{P}}) \omega +(c_0)^{-1}\int_K (p^{\mathrm{P}}-p_h^{\mathrm{P}}) \omega +\xi^{-1}\int_K \kappa\nabla(p^{\mathrm{P}}-p^{\mathrm{P}}_h)\cdot\nabla \omega \\
&\quad+\alpha(\lambda)^{-1}\int_K (\varphi^{\mathrm{P}}-\varphi_h^{\mathrm{P}}+\alpha(p^{\mathrm{P}}-p_h^{\mathrm{P}}) ) \omega.
\end{align*}
Then, Cauchy--Schwarz inequality gives
\begin{align*}
\rho_1\|{R}_3^{\mathrm{P}}\|_{0,K}^2\lesssim &((\rho_1)^{1/2}\|s^{\mathrm{P}}-s^{\mathrm{P}}_h\|_{0,K}+[c_0]^{1/2}\|p^{\mathrm{P}}-p^{\mathrm{P}}_h\|_{0,K}+\xi^{-1/2}\|\kappa^{1/2}\nabla(p^{\mathrm{P}}-p^{\mathrm{P}}_h)\|_{0,K}\\
&\quad +(\lambda^{\mathrm{P}})^{-1/2}\|(\varphi^{\mathrm{P}}-\varphi_h^{\mathrm{P}}+\alpha(p^{\mathrm{P}}-p_h^{\mathrm{P}}) )\|_{0,K})((\kappa/\xi)^{1/2}\|\nabla \omega\|_{0,K}+(\rho_1)^{-1/2})\|\omega\|_{0,K}.
\end{align*}
And the proof follows after noting that 
\begin{align*}
(\frac{\kappa}{\xi})^{1/2}\|\nabla \omega\|_{0,K}+(\rho_1)^{-1/2}\|\omega\|_{0,K}&\lesssim
(\frac{\kappa}{\xi})^{1/2}h_K^{-1}\| \omega\|_{0,K}+\rho_1^{-1/2}\|\omega\|_{0,K})\lesssim (\rho_1)^{-1/2} \|\omega\|_{0,K}
= (\rho_1)^{1/2}\|{R}_3^{\mathrm{P}}\|_{0,K}.
\end{align*}
\end{proof}
\begin{lemma}\label{lemP42}
The  edge contribution to the local poroelastic error estimator satisfies the following bound 
\begin{align*}
&(\sum_{e\in\partial K}h_e(\mu^{\mathrm{P}})^{-1} \|\mathbf{R}_e^{\mathrm{P}}\|_{0,e}^2)^{1/2}\\
&\qquad \lesssim
\sum_{K\in P_e}((\mu^{\mathrm{P}})^{-1/2}h_K\|\bb^{\mathrm{P}}-\bb_h^{\mathrm{P}}\|_{0,K} +(\mu^{\mathrm{P}})^{-1/2}\|\varphi^{\mathrm{P}}-\varphi_h^{\mathrm{P}}\|_{0,K}+(2\mu^{\mathrm{P}})^{1/2}\|\bnabla(\bu^{\mathrm{P}}-\bu_h^{\mathrm{P}})\|_{0,K}). 
\end{align*}
\end{lemma}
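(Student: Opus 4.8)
The plan is to mirror the edge-bubble argument carried out for the elastic estimator in the proof of Lemma~\ref{lemE4}, now on the poroelastic triangulation $\mathcal{T}_h^{\mathrm{P}}$ with the (constant) weight $\mu^{\mathrm{P}}$ in place of $\mu^{\mathrm{E}}$. Fix first an interior facet $e\in\mathcal{E}(\mathcal{T}_h^{\mathrm{P}})\setminus\Gamma$ shared by the two elements forming the patch $P_e$, and introduce the test function $\bzeta_e:=(\mu^{\mathrm{P}})^{-1}h_e\mathbf{R}_e^{\mathrm{P}}b_e$, a polynomial on $P_e$ vanishing on $\partial P_e$. By the edge-bubble estimates \eqref{edgee1} one has $h_e(\mu^{\mathrm{P}})^{-1}\|\mathbf{R}_e^{\mathrm{P}}\|_{0,e}^2\lesssim\int_e\mathbf{R}_e^{\mathrm{P}}\cdot\bzeta_e$. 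Since the exact pair $(\bu^{\mathrm{P}},\varphi^{\mathrm{P}})$ solves \eqref{eq:poro-momentum}, the total poroelastic traction carries no jump across $e$, i.e. $[\![(2\mu^{\mathrm{P}}\beps(\bu^{\mathrm{P}})-\varphi^{\mathrm{P}}\bI)\nn]\!]_e=\cero$; subtracting this identity from $\mathbf{R}_e^{\mathrm{P}}$, integrating by parts element-wise over $P_e$, and using that $\bzeta_e$ vanishes on $\partial P_e$ (so that all inter-element and boundary contributions cancel), one writes $\int_e\mathbf{R}_e^{\mathrm{P}}\cdot\bzeta_e$ as a sum over $K\in P_e$ of volume integrals involving $(\bb^{\mathrm{P}}-\bb_h^{\mathrm{P}})\cdot\bzeta_e$, $\mathbf{R}_1^{\mathrm{P}}\cdot\bzeta_e$, $\mu^{\mathrm{P}}\beps(\bu^{\mathrm{P}}-\bu_h^{\mathrm{P}})\cdot\bnabla\bzeta_e$, and $(\varphi^{\mathrm{P}}-\varphi_h^{\mathrm{P}})\,\vdiv\bzeta_e$, exactly as in Lemma~\ref{lemE4}; the term carrying $\mathbf{R}_1^{\mathrm{P}}$ is the bulk momentum residual generated by the integration by parts.

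Next I would apply Cauchy--Schwarz to each volume term and use the scaling properties \eqref{edgee1} of $\bzeta_e$, which together give $(\mu^{\mathrm{P}})^{1/2}\|\bnabla\bzeta_e\|_{0,K}+(\mu^{\mathrm{P}})^{1/2}h_K^{-1}\|\bzeta_e\|_{0,K}\lesssim h_e^{1/2}(\mu^{\mathrm{P}})^{-1/2}\|\mathbf{R}_e^{\mathrm{P}}\|_{0,e}$ and, separately, $\|\bzeta_e\|_{0,K}\lesssim(\mu^{\mathrm{P}})^{-1}h_e^{3/2}\|\mathbf{R}_e^{\mathrm{P}}\|_{0,e}$. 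Dividing the resulting inequality by the common factor $h_e^{1/2}(\mu^{\mathrm{P}})^{-1/2}\|\mathbf{R}_e^{\mathrm{P}}\|_{0,e}$ bounds $h_e^{1/2}(\mu^{\mathrm{P}})^{-1/2}\|\mathbf{R}_e^{\mathrm{P}}\|_{0,e}$ by the patch sum of $(\mu^{\mathrm{P}})^{-1/2}h_K\|\bb^{\mathrm{P}}-\bb_h^{\mathrm{P}}\|_{0,K}$, $(\mu^{\mathrm{P}})^{-1/2}\|\varphi^{\mathrm{P}}-\varphi_h^{\mathrm{P}}\|_{0,K}$, $(2\mu^{\mathrm{P}})^{1/2}\|\bnabla(\bu^{\mathrm{P}}-\bu_h^{\mathrm{P}})\|_{0,K}$ and $h_K(\mu^{\mathrm{P}})^{-1/2}\|\mathbf{R}_1^{\mathrm{P}}\|_{0,K}$. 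This last contribution is controlled by Lemma~\ref{lemP1}, so it only reproduces the three terms already listed (after relating $\|\bu^{\mathrm{P}}-\bu_h^{\mathrm{P}}\|_{0,K}$ to the gradient term via a Poincaré-type inequality on $K$, which is harmless inside the $\lesssim$). Squaring, summing over the at most $d+1$ facets $e\subset\partial K$, and using the finite overlap of the patches $\{P_e\}$ then yields the claimed estimate.

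For the boundary facets the argument is unchanged in spirit: if $e\subset\Gamma^{\mathrm{P}}_{D}$ then $\mathbf{R}_e^{\mathrm{P}}=\cero$ and there is nothing to prove, while if $e\subset\Gamma^{\mathrm{P}}_{N}$ one uses the single-element patch $P_e=K$ with an edge bubble supported in $K$ and replaces the vanishing interior jump by the homogeneous natural condition $(2\mu^{\mathrm{P}}\beps(\bu^{\mathrm{P}})-\varphi^{\mathrm{P}}\bI)\nn=\cero$ on $\Gamma^{\mathrm{P}}_{N}$. I expect the only genuinely delicate point to be the bookkeeping of the bulk residual $\mathbf{R}_1^{\mathrm{P}}$ produced by the element-wise integration by parts: it must be re-estimated through Lemma~\ref{lemP1} rather than treated as an independent estimator contribution, so that no term of $\estP$ is double-counted. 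The remaining steps are a routine transcription of the elastic case, with the weights flowing through unchanged because $\mu^{\mathrm{P}}$ is constant on $\Omega^{\mathrm{P}}$.
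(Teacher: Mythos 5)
Your argument is essentially the paper's: the paper itself proves Lemma~\ref{lemP42} in one line by appealing to the same edge-bubble construction used for Lemma~\ref{lemE4}, and your write-up is a faithful, correctly weighted transcription of that argument to the poroelastic subdomain. One small inaccuracy in your bookkeeping: you suggest converting the $\|\bu^{\mathrm{P}}-\bu_h^{\mathrm{P}}\|_{0,K}$ term coming from the statement of Lemma~\ref{lemP1} into a gradient term via a Poincar\'e inequality on $K$, but no such inequality is available for a general displacement error; what actually happens (as in the proof of Lemma~\ref{lemE1}, whose displayed estimate carries $h_K\|\bnabla(\bu-\bu_h)\|_{0,K}$ rather than $\|\bu-\bu_h\|_{0,K}$) is that the bubble-function integration by parts directly produces the gradient term, so no Poincar\'e step is needed.
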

\begin{proof}
The proof is conducted similarly to that of Lemma \ref{lemE4}.
\end{proof}
\begin{lemma}\label{lemP5}
There holds:
\begin{align*}
\left(\sum_{K\in\mathcal{T}_h^{\mathrm{P}}} \estP^2\right)^{1/2}\lesssim &\sum_{K\in\mathcal{T}_h^{\mathrm{P}}}\left(((\mu^{\mathrm{P}})^{-1/2}h_K\|\bb^{\mathrm{P}}-\bb_h^{\mathrm{P}}\|_{0,K} +(\mu^{\mathrm{P}})^{-1/2}\|\varphi^{\mathrm{P}}-\varphi_h^{\mathrm{P}}\|_{0,K}+(2\mu^{\mathrm{P}})^{1/2}\|\bnabla(\bu^{\mathrm{P}}-\bu_h^{\mathrm{P}})\|_{0,K})\right.\\
 &\quad(\rho_1)^{1/2}\|s^{\mathrm{P}}-s^{\mathrm{P}}_h\|_{0,K}+(c_0)^{1/2}\|p^\mathrm{P}-p_h^{\mathrm{P}}\|_{0,K}+(\kappa/\xi)^{1/2}\|\nabla(p^{\mathrm{P}}-p_h^{\mathrm{P}})\|_{0,K}\\
&\left.\quad +(1/\lambda^{\mathrm{P}})^{-1/2}\|\varphi-\varphi_h^{\mathrm{P}}+\alpha(p^\mathrm{P}-p_h^{\mathrm{P}})\|_{0,K}\right).
\end{align*}
\end{lemma}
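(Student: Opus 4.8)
The plan is to estimate separately each of the six contributions appearing in $\estP^2$ and then to sum over $K\in\mathcal{T}_h^{\mathrm{P}}$. Four of them have already been treated above: the first vectorial bulk residual $\mathbf{R}_1^{\mathrm{P}}$ by Lemma~\ref{lemP1}, the scalar bulk residual $R_2^{\mathrm{P}}$ by Lemma~\ref{lemE22}, the third scalar bulk residual $R_3^{\mathrm{P}}$ by Lemma~\ref{lemP41}, and the vectorial edge residual $\mathbf{R}_e^{\mathrm{P}}$ by Lemma~\ref{lemP42}. It therefore remains to control the tangential displacement-jump contribution $\frac{\beta_{\bu}\mu^{\mathrm{P}}}{h_e}\|[\![\bu_h^{\mathrm{P}}\otimes\nn]\!]_e\|_{0,e}^2$ and the Darcy-flux jump contribution $\rho_2\|{R}_e^{\mathrm{P}}\|_{0,e}^2$. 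Once all six local bounds are assembled, a triangle inequality together with the finite-overlap property of the edge patches $P_e$ (from shape-regularity) turns the patch sums into element sums and yields the stated estimate.

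For the tangential-jump term I would use the conformity of the exact displacement. Since $\bu^{\mathrm{P}}\in\bH^1(\Omega^{\mathrm{P}})$ (and, like $\bu_h^{\mathrm{P}}$, has a single-valued normal trace), its tangential jump vanishes, $[\![\bu^{\mathrm{P}}\otimes\nn]\!]_e=\cero$, so that $[\![\bu_h^{\mathrm{P}}\otimes\nn]\!]_e=-[\![(\bu^{\mathrm{P}}-\bu_h^{\mathrm{P}})\otimes\nn]\!]_e$. A scaled trace inequality applied on each of the two elements meeting $e$ then gives $\frac{\beta_{\bu}\mu^{\mathrm{P}}}{h_e}\|[\![\bu_h^{\mathrm{P}}\otimes\nn]\!]_e\|_{0,e}^2\lesssim\sum_{K\in P_e}2\mu^{\mathrm{P}}\|\bnabla(\bu^{\mathrm{P}}-\bu_h^{\mathrm{P}})\|_{0,K}^2$ up to a lower-order $\rL^2$ contribution in $\bu^{\mathrm{P}}-\bu_h^{\mathrm{P}}$ (which is absorbed in the global error measure); this matches the displacement term on the right-hand side of the claim.

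For the Darcy-flux jump I would mimic the edge-bubble argument of Lemma~\ref{lemP41}. Since $\eta^{-1}\vdiv[\kappa(\nabla p^{\mathrm{P}}-\rho\bg)]\in\rL^2(\Omega^{\mathrm{P}})$, the field $\eta^{-1}\kappa(\nabla p^{\mathrm{P}}-\rho\bg)$ belongs to $\bH(\vdiv;\Omega^{\mathrm{P}})$ and hence has vanishing normal jumps across interior facets, while on $\Gamma_D^{\mathrm{P}}$ the boundary condition $\frac{\kappa}{\eta}\nabla p^{\mathrm{P}}\cdot\nn^{\Gamma}=0$ plays the analogous role; thus ${R}_e^{\mathrm{P}}$ coincides with the normal jump (resp.\ normal trace) of $\eta^{-1}\kappa\nabla(p_h^{\mathrm{P}}-p^{\mathrm{P}})$. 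Setting $\omega_e\coloneqq\rho_2{R}_e^{\mathrm{P}}b_e$, invoking \eqref{edgee1}, integrating by parts element-wise over $P_e$, and inserting the bulk residual $R_3^{\mathrm{P}}$ so that the volume contribution is expressed through $R_3^{\mathrm{P}}$ plus data and solution differences, a Cauchy--Schwarz step together with the weights $\rho_2=\eta\kappa^{-1}h_e$ and $\rho_1\le h_K^2\eta\kappa^{-1}$ bounds $\rho_2^{1/2}\|{R}_e^{\mathrm{P}}\|_{0,e}$ by the sum over $K\in P_e$ of the terms already appearing on the right-hand side of Lemma~\ref{lemP41}.

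Finally I would collect Lemmas~\ref{lemP1}, \ref{lemE22}, \ref{lemP41}, \ref{lemP42} and the two estimates above, take square roots, and sum over $K\in\mathcal{T}_h^{\mathrm{P}}$, using that each facet lies in only boundedly many patches. The step I expect to be the main obstacle is the Darcy-flux jump estimate: one has to make sure that the discrete normal flux is traded against the correct bulk residual $R_3^{\mathrm{P}}$ (not against $\mathbf{R}_1^{\mathrm{P}}$), and that the scaling weights $\rho_1,\rho_2$ — especially the $\min$ defining $\rho_1$ — are carried through the bubble inverse estimates so that the constants remain independent of $c_0,\alpha,\kappa,\eta,\lambda^{\mathrm{P}},\mu^{\mathrm{P}}$; the gravity term $\rho\bg$ and the Dirichlet part of $\partial\Omega^{\mathrm{P}}$ must also be incorporated consistently in the cancellations.
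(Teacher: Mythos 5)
The paper's own proof of this lemma is a single sentence: it simply combines Lemmas~\ref{lemP1}, \ref{lemE22}, \ref{lemP41}, \ref{lemP42}. You correctly observe that those four lemmas cover only four of the six contributions appearing in $\estP^2$, and that the tangential displacement-jump term and the Darcy-flux edge residual $\rho_2\|R_e^\mathrm{P}\|_{0,e}^2$ are not handled by any of the preceding lemmas. That is a genuine observation that the paper leaves implicit, so your proposal is more complete than the paper's terse proof, not merely a restatement of it.

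Your edge-bubble argument for $\rho_2\|R_e^\mathrm{P}\|_{0,e}^2$ is the right one: since $\frac{\kappa}{\eta}\nabla p^{\mathrm{P}}\in\bH(\vdiv;\Omega^{\mathrm{P}})$, the normal jump of the exact flux vanishes, and the boundary condition on $\Gamma_D^\mathrm{P}$ supplies the analogous cancellation there; the bubble function $b_e$, the scalings $\rho_1,\rho_2$, and an integration by parts against $R_3^\mathrm{P}$ then do exactly what the proof of Lemma~\ref{lemP41} does for the bulk residual. This matches the strategy the authors use throughout.

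The tangential-jump argument, however, contains an avoidable complication. Applying a scaled trace inequality to $\bu^\mathrm{P}-\bu_h^\mathrm{P}$ produces, besides the gradient term, a contribution $h_e^{-2}\|\bu^\mathrm{P}-\bu_h^\mathrm{P}\|_{0,K}^2$ that is \emph{not} present on the right-hand side of the lemma, and you cannot simply declare it ``absorbed'': it scales like a negative power of $h$ and would destroy efficiency. The cleaner and standard argument is the one that renders this term tautological: since $\bu^\mathrm{P}\in\bH^1(\Omega^\mathrm{P})$ has zero tangential jumps, $[\![\bu_h^\mathrm{P}\otimes\nn]\!]_e=-[\![(\bu^\mathrm{P}-\bu_h^\mathrm{P})\otimes\nn]\!]_e$, so the term $\sum_e \frac{\beta_{\bu}\mu^\mathrm{P}}{h_e}\|[\![\bu_h^\mathrm{P}\otimes\nn]\!]_e\|_{0,e}^2$ is literally one of the summands in the definition \eqref{eq:h-norms} of $\|\bu-\bu_h\|_{*,\cT_h}^2$, hence is bounded by $\VERT(\bu-\bu_h,\cdot,\cdot)\VERT^2$ \emph{without any trace inequality}. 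This is why the authors did not bother to write a lemma for it. Strictly speaking this means the right-hand side of the lemma as written (with only a broken-gradient term for the displacement) should be read as, or replaced by, the full $\|\cdot\|_{*,\cT_h}$ error; the final efficiency theorem, which bounds $\Xi$ by the triple norm, is unaffected. So your identification of the missing pieces is correct and your Darcy-jump treatment is sound, but you should drop the trace-inequality step for the jump term in favour of the direct observation above.
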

\begin{proof}
The results follows after combining Lemmas \ref{lemP1}-\ref{lemP42}.
\end{proof}
\subsubsection{Efficiency estimates for interface estimator}

\begin{lemma}\label{eleEP3}
There holds:
\begin{align*}
  &\biggl(\sum_{e\in\Sigma}h_e(\mu^{\mathrm{E}}+\mu^{\mathrm{P}})^{-1} \|\mathbf{R}_{\Sigma}\|_{0,e}^2\biggr)^{1/2} \\
 &\quad \lesssim \sum_{e\in\Sigma}\Big(\sum_{K\in P_e\cap\Omega^{\mathrm{E}}}((2\mu^{\mathrm{E}})^{-1/2}h_K\|\bb^{\mathrm{E}}-\bb_h^{\mathrm{E}}\|_{0,K}+(2\mu^{\mathrm{E}})^{-1/2}\|\varphi^{\mathrm{E}}-\varphi_h^{\mathrm{E}}\|_{0,K}+(2\mu^{\mathrm{E}})^{1/2}\|\bnabla_h(\textbf{u}^{\mathrm{E}}-\textbf{u}_h^{\mathrm{E}})\|_{0,K})\\
&\qquad\quad +\sum_{K\in P_e\cap\Omega^{\mathrm{P}}}((2\mu^{\mathrm{P}})^{-1/2}h_K\|\bb^{\mathrm{P}}-\bb_h^{\mathrm{P}}\|_{0,K}+(2\mu^{\mathrm{P}})^{-1/2}\|\varphi^{\mathrm{P}}-\varphi_h^{\mathrm{P}}\|_{0,K}+(2\mu^{\mathrm{P}})^{1/2}\|\bnabla_h(\bu^{\mathrm{P}}-\textbf{u}_h^{\mathrm{P}})\|_{0,K})\Big).
\end{align*}
\end{lemma}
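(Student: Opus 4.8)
The plan is to adapt the facet-bubble technique from Lemmas~\ref{lemE4} and~\ref{lemP42} to the interfacial facets, exploiting the continuity of total traction across $\Sigma$ satisfied by the exact solution (the second identity in \eqref{eq:transmission2}). Fix $e\in\cE_h^\Sigma$ and let $P_e=K^+\cup K^-$ be the associated patch with, say, $K^+\subset\OmE$ and $K^-\subset\OmP$; let $b_e$ be the facet bubble, which vanishes on $\partial P_e$. I would introduce the test field
\[
\bzeta_e:=(\mu^{\mathrm{E}}+\mu^{\mathrm{P}})^{-1}h_e\,\mathbf{R}_{\Sigma}\,b_e,
\]
suitably extended so that $\bzeta_e\in\bH^1_0(P_e)$ and the scaling bounds \eqref{edgee1} hold on each element of $P_e$. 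By the first estimate in \eqref{edgee1},
\[
h_e(\mu^{\mathrm{E}}+\mu^{\mathrm{P}})^{-1}\|\mathbf{R}_{\Sigma}\|_{0,e}^2\lesssim \int_e\mathbf{R}_{\Sigma}\cdot\bzeta_e.
\]

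Next I would subtract the exact transmission identity $[(2\mu^{\mathrm{E}}\beps(\bu^{\mathrm{E}})-\varphi^{\mathrm{E}}\bI)-(2\mu^{\mathrm{P}}\beps(\bu^{\mathrm{P}})-\varphi^{\mathrm{P}}\bI)]\nn=\cero$ from the integrand, so that $\mathbf{R}_{\Sigma}$ is replaced (up to the lower-order $\varphi^{\mathrm{E}}_h$ versus $\varphi^{\mathrm{P}}_h$ discrepancy in its definition) by the jump of the errors $2\mu\beps(\bu-\bu_h)-(\varphi-\varphi_h)\bI$ across $\Sigma$. Then I would integrate by parts element-wise on $K^+$ and on $K^-$: the contributions on $\partial P_e\setminus e$ vanish since $\bzeta_e=\cero$ there, and on each element the interior part produces a volume integral of $2\mu\beps(\bu-\bu_h)-(\varphi-\varphi_h)\bI$ against $\bnabla\bzeta_e$ together with the strong residual $\bdiv(2\mu\beps(\bu_h^{\star})-\varphi_h^{\star}\bI)$, which by the bulk equations \eqref{eq:Elast}, \eqref{eq:poro-momentum} and the definitions of $\mathbf{R}_1^{\mathrm{E}}$, $\mathbf{R}_1^{\mathrm{P}}$ equals $\bb^{\star}-\bb^{\star}_h$ minus $\mathbf{R}_1^{\star}$, $\star\in\{\mathrm{E},\mathrm{P}\}$. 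Applying Cauchy--Schwarz, the bubble bounds $\|\bnabla\bzeta_e\|_{0,K}+h_K^{-1}\|\bzeta_e\|_{0,K}\lesssim h_K^{-1}\|\bzeta_e\|_{0,K}\lesssim h_e^{1/2}(\mu^{\mathrm{E}}+\mu^{\mathrm{P}})^{-1}\|\mathbf{R}_{\Sigma}\|_{0,e}$, and then absorbing the $\mathbf{R}_1^{\mathrm{E}}$, $\mathbf{R}_1^{\mathrm{P}}$ terms via Lemmas~\ref{lemE1} and~\ref{lemP1}, one divides through by $h_e^{1/2}(\mu^{\mathrm{E}}+\mu^{\mathrm{P}})^{-1}\|\mathbf{R}_{\Sigma}\|_{0,e}$ and obtains the stated sum of elementwise data-oscillation and error terms over $P_e\cap\OmE$ and $P_e\cap\OmP$. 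Summing over $e\in\cE_h^\Sigma$ concludes the argument.

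The main obstacle is the bookkeeping of the parameter weights across the interface: the patch straddles $\Sigma$, so the two sides carry different Lam\'e moduli $\mu^{\mathrm{E}}$, $\mu^{\mathrm{P}}$, and one must check that the single combined weight $(\mu^{\mathrm{E}}+\mu^{\mathrm{P}})^{-1}$ assigned to $\bzeta_e$ reproduces, on each side, precisely the $(2\mu^{\star})^{-1/2}$ and $(2\mu^{\star})^{1/2}$ scalings appearing in the statement. This relies on $(\mu^{\mathrm{E}}+\mu^{\mathrm{P}})^{-1}\le(\mu^{\star})^{-1}$ and on the fact that the $b_e$-weighted inverse estimates localise to each element, so that no delicate harmonic-averaging issue arises; the remaining verification that the contributions from the two subdomains recombine into the right-hand side of the lemma is a routine (if tedious) computation. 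The slight $\varphi^{\mathrm{E}}_h$/$\varphi^{\mathrm{P}}_h$ mismatch in $\mathbf{R}_{\Sigma}$ only perturbs the already-present $(2\mu^{\star})^{-1/2}\|\varphi^{\star}-\varphi^{\star}_h\|$ terms and does not affect the conclusion.
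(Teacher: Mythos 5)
Your argument mirrors the paper's proof almost step for step: same bubble test field $\bzeta_e=(\mu^{\mathrm{E}}+\mu^{\mathrm{P}})^{-1}h_e\mathbf{R}_{\Sigma}b_e$, same subtraction of the exact traction-continuity condition, same element-wise integration by parts on both halves of the patch, same Cauchy--Schwarz and bubble inverse estimates, and the same observation that $\mu^{\star}\le\mu^{\mathrm{E}}+\mu^{\mathrm{P}}$ makes the single interfacial weight compatible with the per-side $(2\mu^{\star})^{\pm 1/2}$ scalings. Your explicit handling of the residual terms $\mathbf{R}_1^{\mathrm{E}},\mathbf{R}_1^{\mathrm{P}}$ via Lemmas~\ref{lemE1} and~\ref{lemP1}, and your note about the (apparently typographical) $\varphi^{\mathrm{E}}_h$/$\varphi^{\mathrm{P}}_h$ mismatch in the stated definition of $\mathbf{R}_{\Sigma}$, are both accurate and if anything slightly cleaner than the paper's own exposition.
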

\begin{proof}
For each $e\in \mathcal{E}_h^\Sigma$,  
$\bzeta_e$ is defined locally as 
$ \bzeta_e=(\mu^{\mathrm{E}}+\mu^{\mathrm{P}})^{-1}h_e\mathbf{R}_{\Sigma}b_e.$ 
Using (\ref{edgee1}) gives 
\[
h_e(\mu^{\mathrm{E}}+\mu^{\mathrm{P}})^{-1}\|\mathbf{R}_{\Sigma}\|_{0,e}^2\lesssim \int_e \mathbf{R}_{\Sigma}\cdot ((\mu^{\mathrm{E}}+\mu^{\mathrm{P}})^{-1}h_e\mathbf{R}_{\Sigma}b_e) =\int_{e}\mathbf{R}_{\Sigma}\cdot \bzeta_e.
\]
Integration by parts implies
\begin{align*}
\int_e &(2{\mu^{\mathrm{E}}}( \beps(\bu^\mathrm{E}_h)- \beps(\bu^\mathrm{E})) -(\varphi_{h}^{\mathrm{E}}-\varphi^{\mathrm{E}})\bI)\nn  \cdot \bzeta_e -(2{\mu^{\mathrm{P}}}( \beps(\bu^\mathrm{P}_h)- \beps(\bu^\mathrm{P})) -(\varphi_{h}^{\mathrm{P}}-\varphi^{\mathrm{P}})\bI)\nn \cdot \bzeta_e\\
&=\sum_{K\in P_e\cap\Omega^{\mathrm{E}}}\int_K(\bdiv(2{\mu^{\mathrm{E}}}( \beps(\bu^\mathrm{E}_h)- \beps(\bu^\mathrm{E}))) +\nabla(\varphi_{h}^{\mathrm{E}}-\varphi^{\mathrm{E}}))\cdot \bzeta_e\\
&\qquad -\sum_{K\in P_e\cap\Omega^{\mathrm{E}}}\int_K(2{\mu^{\mathrm{E}}}( \beps(\bu^\mathrm{E}_h)- \beps(\bu^\mathrm{E})) -(\varphi_{h}^{\mathrm{E}}-\varphi^{\mathrm{E}})\bI):\bnabla  \bzeta_e \\
&\qquad - \sum_{K\in P_e\cap\Omega^{\mathrm{P}}}\int_K(\bdiv(2{\mu^{\mathrm{P}}}( \beps(\bu^\mathrm{P}_h)- \beps(\bu^\mathrm{P}))) +\nabla(\varphi_{h}^{\mathrm{P}}-\varphi^{\mathrm{P}}))\cdot \bzeta_e\\
&\qquad -\sum_{K\in P_e\cap\Omega^{\mathrm{P}}}\int_K(2{\mu^{\mathrm{P}}}( \beps(\bu^\mathrm{P}_h)- \beps(\bu^\mathrm{P})) +(\varphi_{h}^{\mathrm{P}}-\varphi^{\mathrm{P}})\bI):\bnabla  \bzeta_e.
\end{align*}
Note that $\bb^{\mathrm{P}}+\bdiv(2{\mu^{\mathrm{P}}}\beps(\bu^\mathrm{P}) -p^{\mathrm{P}}\bI)=\cero|_K$ and $\bb^{\mathrm{E}}+\bdiv(2{\mu^{\mathrm{E}}}\beps(\bu^\mathrm{E}) -p^{\mathrm{E}}\bI)=\cero|_K$. Then, we can assert that 
\begin{align*}
\frac{h_e}{\mu^{\mathrm{E}}+\mu^{\mathrm{P}}}\|\mathbf{R}_{\Sigma}\|_{0,e}^2 
&\lesssim \sum_{K\in P_e\cap\Omega^{\mathrm{E}}}\int_{K} \left((\bb_h^{\mathrm{E}}-\bb^{\mathrm{E}})\cdot \bzeta_e -\int_K 2{\mu^{\mathrm{E}}}( \beps(\bu^\mathrm{E}_h)- \beps(\bu^\mathrm{E})) : \bnabla \bzeta_e +\int_K (p_h^{\mathrm{E}}-p^{\mathrm{E}})\nabla\cdot \bzeta \right)\\
&\quad+ \sum_{K\in P_e\cap\Omega^{\mathrm{E}}}\int_{K} \mathbf{R}_1^{\mathrm{E}} \cdot \bzeta_e+ \sum_{K\in P_e\cap\Omega^{\mathrm{P}}}\int_{K} \mathbf{R}_1^{\mathrm{P}} \cdot \bzeta_e \\
&\quad+ \sum_{K\in P_e\cap\Omega^{\mathrm{P}}}\int_{K} \left((\bb_h^{\mathrm{P}}-\bb^{\mathrm{P}})\cdot \bzeta_e -\int_K 2{\mu^{\mathrm{P}}}( \beps(\bu^\mathrm{P}_h)- \beps(\bu^\mathrm{P})) : \bnabla \bzeta_e +\int_K (p_h^{\mathrm{P}}-p^{\mathrm{P}})\nabla\cdot \bzeta \right).
\end{align*}
 Applying Cauchy--Schwarz inequality gives 
\begin{align*}
\frac{h_e}{\mu^{\mathrm{E}}+\mu^{\mathrm{P}}}\|\mathbf{R}_e\|_{0,e}^2 & \lesssim
\sum_{K\in P_e\cap\Omega^{\mathrm{E}}}((2\mu^{\mathrm{E}})^{-1/2}h_K\|\bb^{\mathrm{E}}-\bb_h^{\mathrm{E}}\|_{0,K}+(2\mu^{\mathrm{E}})^{-1/2}\|\varphi^{\mathrm{E}}-\varphi_h^{\mathrm{E}}\|_{0,K}+(2\mu)^{1/2}\|\beps(\bu^\mathrm{E}_h)- \beps(\bu^\mathrm{E})\|_{0,K})\times\\
&\qquad\qquad\quad((2\mu^{\mathrm{E}})^{1/2}\|\bnabla \bzeta\|_{0,K}+(2\mu^{\mathrm{E}})^{1/2}h_K^{-1}\|\bzeta\|_{0,K})\\
&\quad+\sum_{K\in P_e\cap\Omega^{\mathrm{P}}}((2\mu^{\mathrm{P}})^{-1/2}h_K\|\bb^{\mathrm{P}}-\bb_h^{\mathrm{P}}\|_{0,K}+(2\mu^{\mathrm{P}})^{-1/2}\|\varphi^{\mathrm{P}}-\varphi_h^{\mathrm{P}}\|_{0,K}+(2\mu)^{1/2}\|\beps(\bu^\mathrm{P}_h)- \beps(\bu^\mathrm{P})\|_{0,K})\times\\
&\qquad\qquad\quad((\mu^{\mathrm{P}})^{1/2}\|\bnabla \bzeta\|_{0,K}+(\mu^{\mathrm{P}})^{1/2}h_K^{-1}\|\bzeta\|_{0,K}).
\end{align*}
And as a consequence of the bounds 
\begin{align*}
(2\mu^{\mathrm{E}})^{1/2}\|\bnabla \bzeta\|_{0,K}+(2\mu^{\mathrm{E}})^{1/2}h_K^{-1}\|\bzeta\|_{0,K}
&\lesssim (2\mu^{\mathrm{E}})^{1/2} h_K^{-1}\|\bzeta\|_{0,K}\lesssim h_e^{1/2}(\mu^{\mathrm{E}}+\mu^{\mathrm{P}})^{-1/2}\|\mathbf{R}_e\|_{0,e},\\
(2\mu^{\mathrm{P}})^{1/2}\|\bnabla \bzeta\|_{0,K}+(2\mu^{\mathrm{P}})^{1/2}h_K^{-1}\|\bzeta\|_{0,K}
&\lesssim (2\mu^{\mathrm{P}})^{1/2} h_K^{-1}\|\bzeta\|_{0,K}\lesssim h_e^{1/2}(\mu^{\mathrm{E}}+\mu^{\mathrm{P}})^{-1/2}\|\mathbf{R}_e\|_{0,e},
\end{align*}
the desired estimates hold true.
\end{proof}
\begin{theorem}[Efficiency]
Let $(\bu,p^{\mathrm{P}},\varphi)$ and $(\bu_h,p^{\mathrm{P}}_h,\varphi_h)$ be the solutions of the weak formulations \eqref{weakEP} and \eqref{semidis11}, respectively. Then the following efficiency bound holds
\[
\Xi\le C_{\mathrm{eff}} (\VERT(\bu-\bu_h,p^{\mathrm{P}}-p^{\mathrm{P}}_h,\varphi-\varphi_h)\VERT+\Upsilon),
\]
where $C_{\mathrm{eff}}>0$ is a   constant independent of $h$ and of the sensible model parameters.
\end{theorem}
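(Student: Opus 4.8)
The plan is to prove the efficiency bound residual by residual, with the classical localisation technique of Verf\"urth based on interior and facet bubble functions, and then to assemble the local bounds into $\Xi$ by squaring, summing over elements and facets, and invoking the finite overlap of the facet patches $P_e$. Most of the work is already available: Lemmas~\ref{lemE1}, \ref{lemE21} and \ref{lemE4} control the three nontrivial summands of $\estE^2$ (built on $\mathbf{R}_1^{\mathrm{E}}$, $R_2^{\mathrm{E}}$ and $\mathbf{R}_e^{\mathrm{E}}$), Lemmas~\ref{lemP1}, \ref{lemE22}, \ref{lemP41} and \ref{lemP42} control $\estP^2$ (the missing Darcy flux edge residual $R_e^{\mathrm{P}}$ being handled by an edge-bubble argument analogous to Lemma~\ref{lemP42} that also invokes $R_3^{\mathrm{P}}$ as in Lemma~\ref{lemP41}), and Lemma~\ref{eleEP3} controls the traction residual $\mathbf{R}_{\Sigma}$ inside $\estint^2$. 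The tangential-jump summands of $\estE^2$, $\estP^2$ and $\estint^2$ need no bubble argument at all: since $\bu\in\bV\subset\bH^1(\Omega)$ we have $\jump{\bu\otimes\nn}=\cero$ on every facet, hence $\jump{\bu_h\otimes\nn}=\jump{(\bu_h-\bu)\otimes\nn}$, so those terms are, up to a fixed constant, summands of $\norm{\bu-\bu_h}_{*,\cT_h}^2\le\VERT(\bu-\bu_h,p^{\mathrm{P}}-p^{\mathrm{P}}_h,\varphi-\varphi_h)\VERT^2$. What remains, then, is to collect these estimates, to identify their right-hand sides with the summands of $\VERT(\bu-\bu_h,p^{\mathrm{P}}-p^{\mathrm{P}}_h,\varphi-\varphi_h)\VERT^2$ and $\Upsilon^2$, and to supply the one term not yet treated, the efficiency of the interface flux residual $\widehat{R}_{\Sigma}$.

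For the elastic bulk I would square the bounds of Lemmas~\ref{lemE1}, \ref{lemE21} and \ref{lemE4}, sum over $K\in\mathcal{T}_h^{\mathrm{E}}$, and use the bounded overlap of the patches $P_e$; the resulting right-hand side consists of $\norm{\bu^{\mathrm{E}}-\bu_h^{\mathrm{E}}}_{*,\cT_h}^2$, of $\tfrac{1}{\lambda^{\mathrm{E}}}\norm{\varphi^{\mathrm{E}}-\varphi_h^{\mathrm{E}}}_{0,\OmE}^2+\norm{\tfrac{1}{\sqrt{2\mu}}(\varphi-\varphi_h)}_{0,\OmE}^2$, and of $\sum_{K\in\mathcal{T}_h^{\mathrm{E}}}\UpsilonE^2$, each of which is $\lesssim\VERT\cdot\VERT^2+\Upsilon^2$. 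The poroelastic bulk block follows the same pattern from Lemmas~\ref{lemP1}, \ref{lemE22}, \ref{lemP41}, \ref{lemP42} (and the $R_e^{\mathrm{P}}$ bound), the only additional step being that the combinations $(\varphi^{\mathrm{P}}-\varphi_h^{\mathrm{P}})\pm\alpha(p^{\mathrm{P}}-p_h^{\mathrm{P}})$ that appear there, together with $c_0\norm{p^{\mathrm{P}}-p_h^{\mathrm{P}}}_{0,\OmP}^2$ and $\norm{\tfrac{\kappa}{\eta}\nabla(p^{\mathrm{P}}-p_h^{\mathrm{P}})}_{0,\OmP}^2$, are controlled by $\VERT\cdot\VERT^2$ after invoking the parameter-robust norm equivalences of Remark~\ref{rem:norms}; the $s^{\mathrm{P}}$- and $\bb^{\mathrm{P}}$-contributions accumulate into $\sum_{K\in\mathcal{T}_h^{\mathrm{P}}}\UpsilonP^2$.

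It remains to bound the interface flux residual. Using the transmission condition $\tfrac{\kappa}{\eta}(\nabla p^{\mathrm{P}}-\rho\bg)\cdot\nn=0$ on $\Sigma$, I would write $\widehat{R}_{\Sigma}=\{\tfrac{\kappa}{\eta}\nabla(p^{\mathrm{P}}_h-p^{\mathrm{P}})\cdot\nn\}$ up to higher-order oscillation of the datum $\rho\bg$, introduce the facet bubble $b_e$ supported on the unique poroelastic element $K$ adjacent to $e\in\mathcal{E}_h^\Sigma$, test with $\zeta_e=h_e\eta\kappa^{-1}\widehat{R}_{\Sigma}b_e$, integrate by parts over $K$, and identify in the resulting volume term the interior Darcy contribution already bounded via $R_3^{\mathrm{P}}$ in Lemma~\ref{lemP41}; a trace inequality and the bubble estimates \eqref{edgee1} then give $h_e\eta\kappa^{-1}\norm{\widehat{R}_{\Sigma}}_{0,e}^2\lesssim\sum_{K\in P_e\cap\OmP}\bigl(\tfrac{\kappa}{\eta}\norm{\nabla(p^{\mathrm{P}}-p_h^{\mathrm{P}})}_{0,K}^2+\rho_1\norm{R_3^{\mathrm{P}}}_{0,K}^2\bigr)$, which again lies within $\VERT\cdot\VERT^2+\Upsilon^2$. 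Adding the three groups, summing and taking square roots yields the asserted bound, with $C_{\mathrm{eff}}$ depending only on shape-regularity and on $\beta_{\bu}$.

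The hard part is the parameter-robust bookkeeping rather than any single estimate: one has to verify that, throughout the bubble computations and in the final identification, the weights $\mu^{\mathrm{E}},\mu^{\mathrm{P}},\lambda^{\mathrm{E}},\lambda^{\mathrm{P}},c_0,\kappa,\eta$ enter exactly as in \eqref{eq:triplenorm}. In the poroelastic block this is what forces one through the equivalences of Remark~\ref{rem:norms} to reconcile the ``$\pm\alpha$'' combinations, while for the flux and Darcy edge residuals it is the specific scalings $\rho_1,\rho_2,\rho_d$ that keep the trace and inverse constants independent of $\kappa$ and $\eta$. A secondary, routine point is the finite-overlap argument needed to convert the element- and facet-wise bounds into the global quantities $\Xi$, $\VERT\cdot\VERT$ and $\Upsilon$.
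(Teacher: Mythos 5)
Your proposal is correct and follows the paper's own route: bubble-function localisation element by element and facet by facet, squared and summed with finite patch overlap, exactly as packaged in Lemmas~\ref{lemE5}, \ref{lemP5} and \ref{eleEP3}. You in fact go further than the paper's one-line proof, correctly noticing that those three cited lemmas do not explicitly bound every summand of $\Xi^2$ — namely the tangential-jump terms in $\estE^2$, $\estP^2$ and $\estint^2$, the Darcy edge flux $\rho_2\|R_e^{\mathrm{P}}\|_{0,e}^2$ inside $\estP^2$, and the interface flux $h_e\eta\kappa^{-1}\|\widehat{R}_{\Sigma}\|_{0,e}^2$ inside $\estint^2$ — and you supply exactly the right closing arguments: $\jump{\bu\otimes\nn}=\cero$ reduces the jump terms to pieces of $\norm{\bu-\bu_h}_{*,\cT_h}^2$, and edge-bubble estimates using the interior and interface no-flux relations (with $R_3^{\mathrm{P}}$ absorbing the induced volume terms) handle the two flux residuals with the $\rho_1,\rho_2$ scalings chosen precisely so the trace and inverse constants stay parameter-free. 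Invoking the norm equivalences of Remark~\ref{rem:norms} to reconcile the $\varphi^{\mathrm{P}}\pm\alpha p^{\mathrm{P}}$ combinations with the $\VERT\cdot\VERT$-norm is likewise the intended device; in short, your writeup is a more careful and complete rendering of the same argument the paper sketches.
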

\begin{proof}
The bound results from combining Lemmas \ref{lemE5}, \ref{lemP5} and \ref{eleEP3}.
\end{proof}

\begin{remark}
To introduce  \emph{a posteriori} error estimation for formulation \eqref{semidis12}, we modify the proposed estimator for formulation \eqref{semidis11}.
Specifically, we add one extra jump term for discontinuous fluid pressure so that the  modified \emph{a posteriori}  error estimator is as follows:
\begin{align}\label{apost_formII}
\Xi^2 &:=\sum_{K\in\mathcal{T}_h^{\mathrm{E}}} \estE^2+\sum_{K\in\mathcal{T}_h^{\mathrm{P}}} \widetilde{\estP}^2+\sum_{e\in \mathcal{E}_h^\Sigma} 
\estint^2,
\end{align}
with 
\begin{align*}
\widetilde{\estP}^2={\estP}^2+\sum_{e\in\partial K}\frac{\beta_{p^{\mathrm{P}}}\kappa}{h_e\eta} \|[\![p^{\mathrm{P}}_h\nn]\!]_e\|_{0,e}^2,
\end{align*}
where $\estE$, $\estP$ and $\estint$ are defined in Section \ref{def_apost_formI}. The proposed \emph{a posteriori} estimator (\ref{apost_formII})
is also reliable, efficient and robust. The idea of proofs of reliability and efficiency  is similar to the \emph{a posteriori}  estimation associated with formulation \eqref{semidis11}.
\end{remark}

\section{Robust block preconditioning}\label{sec:robustness}

Building upon the analysis results in Sections~\ref{sec:FE} and~\ref{sec:apriori}, our goal now is to construct norm-equivalent block diagonal preconditioners for the discrete systems \eqref{semidis11} and \eqref{semidis12} that are robust with respect to  (e.g., high interface contrast in the) physical parameters and mesh size $h$. 

To this end, we begin by writing system \eqref{weakEP} in the following operator form $ \cM \vec{x} = \cJ$, with $\vec{x} = (\bu,p^{\mathrm{P}},\varphi)$, $\cJ = (F,G,0)$,  and 
\begin{equation}\label{eq:operator-form}
  \cM =   \begin{pmatrix} \cA_1 &0 & \cB'_1 \\
0  & -\cC_1 & \cB'_2\\
\cB_1& \cB_2  & -\cC_2 
\end{pmatrix}.
\end{equation}
The block operators in $\cM$ are induced by the respective bilinear forms as: 
\begin{align*}
\cA_1 &: \bV \to \bV', \quad \langle \cA_1(\bu),\bv\rangle := a_1(\bu,\bv) = \int_\Omega 2\mu\beps(\bu):\beps(\bv), \\
\cB_1 &: \bV\to \rZ', \quad \langle \cB_1(\bv),\psi\rangle := b_1(\bv,\psi) = -\int_\Omega \psi\vdiv\bv,\\
\cB_2 &: \rQ^{\mathrm{P}} \to \rZ', \quad \langle \cB_2(p^{\mathrm{P}}),\psi\rangle := b_2(p^{\mathrm{P}},\psi) = \int_{\Omega^{\mathrm{P}}}\frac{\alpha}{\lambda}p^{\mathrm{P}}\psi, \\
\cC_1 &: \rQ^{\mathrm{P}} \to {\rQ^{\mathrm{P}}}', \quad \langle\cC_1(p^{\mathrm{P}}),q^{\mathrm{P}}\rangle := \tilde{a}_2(p^{\mathrm{P}},q^{\mathrm{P}}) + a_2(p^{\mathrm{P}},q^{\mathrm{P}})=\int_{\Omega^{\mathrm{P}}}\left( \left(c_0+\frac{\alpha^2}{\lambda}\right)p^{\mathrm{P}}q^{\mathrm{P}} + \frac{\kappa}{\eta}\nabla p^{\mathrm{P}} \nabla q^{\mathrm{P}} \right),\\
\cC_2 &: \rZ \to \rZ', \quad \langle\cC_2(\varphi),\psi\rangle=a_3(\phi,\psi) := \frac{1}{\lambda}\int_\Omega\varphi\psi.
\end{align*}
Similarly, the discrete systems \eqref{semidis11} and \eqref{semidis12} can be cast, respectively, in the following matrix block-form:
\begin{equation}\label{eq:matrix-form11}
\underbrace{  \begin{pmatrix} \cA_{1h} &0 & \cB'_1 \\
	0  & -\cC_1 & \cB'_2\\
	\cB_1& \cB_2  & -\cC_2 
	\end{pmatrix}}_{=:\cM_h}\begin{pmatrix} \bu_h \\p_h^\mathrm{P}\\ \varphi_h \end{pmatrix} = \begin{pmatrix} F \\ G\\ 0 \end{pmatrix}, \quad \text{and}\quad 
\underbrace{  \begin{pmatrix} \cA_{1h} &0 & \cB'_1 \\
	0  & -\cC_{1h} & \cB'_2\\
	\cB_1& \cB_2  & -\cC_2 
	\end{pmatrix}}_{=:\widetilde{\cM}_h}\begin{pmatrix} \bu_h \\p_h^\mathrm{P}\\ \varphi_h \end{pmatrix} = \begin{pmatrix} F \\ G\\ 0 \end{pmatrix},
\end{equation}
where $\cM_h$ and $\widetilde{\cM}_h$ are  induced by the multilinear forms $M_h$ and $\widetilde{M}_h$, respectively, and 
\begin{align*}
\cA_{1h}&:\bV_h \to \bV_h', \quad \langle \cA_{1h}(\bu_h),\bv_h\rangle := a_1^h(\bu_h,\bv_h),\\
\cC_{1h}&: \rQ_h^{\mathrm{P}} \to {\rQ_h^{\mathrm{P}}}', \quad \langle\cC_1(p^{\mathrm{P}}_h),q^{\mathrm{P}}_h\rangle := \tilde{a}_2(p^{\mathrm{P}}_h,q^{\mathrm{P}}_h) + a^h_2(p^{\mathrm{P}}_h,q^{\mathrm{P}}_h).
\end{align*}

Next, and following \cite{lee17} (see also \cite[Remark 5]{hong19} and \cite{hong16b}), a preconditioner for the linear systems in \eqref{eq:matrix-form11} can be constructed from the discrete version of the continuous Riesz map block-diagonal operator. This latter continuous map is defined as follows:
\begin{align}\label{eq:riesz} 
& \nonumber\cP: \bV \times \rQ^{\mathrm{P}}\times\rZ  \to (\bV \times \rQ^{\mathrm{P}}\times\rZ)',\\	
&\quad \cP:=\begin{pmatrix} [\cA_1]^{-1} & 0&0\\0 &\![\cC_1]^{-1} & 0 \\ 0 &0& [\cC_2']^{-1}\end{pmatrix} =
\begin{pmatrix}
2\mu\bdiv\beps & 0 & 0 \\
0 &\! \!\! \! \! \!\!\!\!\left(C_0 + \frac{\alpha^2}{\lambda} \right)\bI-\vdiv(\frac{\kappa}{\eta}\nabla) & 0 \\
0 & 0 &\! \!\!\! \!\!  \left(\frac{1}{\lambda} + \frac{1}{2\mu}\right)\bI
\end{pmatrix}^{-1}.
\end{align}
Note that, when comparing the previous expression with the main block diagonal of \eqref{eq:operator-form}, $\cC_2$ is replaced by $\cC_2'$, which contains the additional term $\frac{1}{2\mu}$. Furthermore, we define the discrete weighted space $\bX_{h,\epsilon} := \bV_h\times \rQ_h^{\mathrm{P}} \times \rZ_h$ which contains all triplets $(\bu_h,p_h^{\mathrm{P}},\varphi_h)$ that are bounded in the discrete weighted norm  $\VERT\cdot\VERT$, and, similarly, $\bX_{h,\epsilon,\ast} := \bV_h\times \widetilde{\rQ}_h^{\mathrm{P}} \times \rZ_h$, with the norm $\VERT\cdot\VERT_{\ast}$, in the discontinuous fluid pressure case. Here  the subindex $\epsilon$ represents all weighting parameters $(\mu,c_0,\alpha,\lambda,\kappa,\eta)$.

Note that the discrete solution operator $\cM_h$ (and also $\widetilde{\cM}_h$ for the case of discontinuous pressure) is self-adjoint and indefinite on  $\bX_{h,\epsilon}$ (resp. on $\bX_{h,\epsilon,\ast}$). The stability of this operator in the triple norm has been proven in Theorem~\ref{new11}, which implies that it is a uniform isomorphism (see also Theorem~\ref{new12} for the case of discontinuous pressure and using the norm $\VERT\cdot\VERT_{\ast}$). Based on the discrete solution operators and the Riesz map \eqref{eq:riesz}, we have the following form for the discrete preconditioners:
\begin{equation}\label{eq:Ph}
\cP_h = \begin{pmatrix} [\widehat{\cA}_{1h}]^{-1} & 0&0\\0 & [\cC_1]^{-1} & 0 \\ 
0 & 0 & [\cC_2']^{-1}\end{pmatrix}, \quad \widetilde{\cP}_h = \begin{pmatrix} [\widehat{\cA}_{1h}]^{-1} & 0&0\\0 & [\cC_{1h}]^{-1} & 0 \\ 
0 & 0 & [\cC_2']^{-1}\end{pmatrix}, 
\end{equation}
where $\widehat{\cA}_{1h}$ is defined as follows:
\[\widehat{\cA}_{1h}:\bV_h \to \bV_h', \quad \langle \widehat{\cA}_{1h}(\bu_h),\bv_h\rangle := \sum_{K\in\cT_h}2\mu (\beps(\bu_h),\beps(\bv_h))_K + \sum_{e\in\cE_h \cup \Gamma_D^*}\frac{2\mu \beta_{\bu}}{h_e}\langle\jump{\bu_h\otimes\nn},\jump{\bv_h\otimes\nn}\rangle_e,\]
i.e., the operator defining the $\|\cdot\|_{*,\cT_h}$ norm.

The preconditioners $\cP_h,\widetilde{\cP}_h$ represent self-adjoint and positive-definite operators. They can be thus used to accelerate the convergence of the MINRES iterative solver for the solution of the symmetric indefinite linear systems \eqref{semidis11} and \eqref{semidis12}, respectively. A suitable norm equivalence result (see, e.g., Remark \ref{rem:norms}) implies that the matrices in \eqref{eq:Ph} are indeed canonical block-diagonal preconditioners which are robust with respect to all model parameters. In addition, and owing to the discrete inf-sup conditions stated in Section~\ref{sec:FE}, we can state that the discrete preconditioners are also robust with respect to the discretisation parameter $h$. This will be confirmed with suitable numerical experiments in Section~\ref{sec:exp_robustness}.

\section{Representative computational results} \label{sec:results}
In this section we perform some numerical examples that confirm the validity of the derived error estimates. All of these tests were conducted using the open source finite element libraries {\tt FEniCS} \cite{alnaes} (using {\tt multiphenics} \cite{multiphenics} for the handling of subdomains and incorporation of restricted finite element spaces) and {\tt Gridap} \cite{badia20} (version 0.17.12). 
{\tt Gridap} is a free and open-source finite element framework written in Julia that combines a high-level user interface to define the weak form in a syntax close to the mathematical notation and a computational backend based on the Julia JIT-compiler, which generates high-performant code tailored for the user input \cite{Verdugo2022}. 
We have taken advantage of the extensible and modular nature of \texttt{Gridap} to implement the new methods in this paper. The high-level API in {\tt Gridap} provides all the ingredients required for the definition of the forms in (\ref{semidis11}) and (\ref{semidis12}), e.g., integration on facets in $\mathcal{E}_h$, $\Gamma_D^*$, and $\mathcal{E}_{h}^{\Sigma}$ and jump/mean operators in (\ref{eq:mean-jump}). It also provides those tools required to build and apply the block diagonal preconditioners in (\ref{eq:Ph}). In the future, we plan to leverage the implementation with \texttt{GridapDistributed} \cite{Badia2022} so that we can tackle large-scale application problems on petascale distributed-memory computers.
For the sake of reproducibility, the Julia software used in this paper is available publicly/openly at \cite{hdiv_biot_elasticity_paper_software}. 
Except for the preconditioning tests collected in Section~\ref{sec:exp_robustness}, all sparse linear systems are solved with UMFPACK for the Julia codes or 
with the  Multifrontal Massively Parallel Solver MUMPS \cite{mumps} otherwise.


\begin{table}[t]
	\centering
\begin{tabular}{||cr|gg|cc|cc|cc|c||}
	\toprule 
$k$ &{\tt DoF}& ${\tt e}(\bu,p^\mathrm{P},\varphi)$ & {\tt rate} &  ${\tt e}_{\ast}(\bu)$ &  {\tt rate} & ${\tt e}(p^\mathrm{P})$ &  {\tt rate} 
& ${\tt e}(\varphi)$ & {\tt rate}  & ${\tt eff}(\Xi)$\\
\midrule
\multirow{6}{*}{0}&
    81 & 5.84e+03 & * & 2.61e+02 & * & 1.22e+00 & * & 8.24e+03 & * & 1.35e-01\\
 &  296 & 2.63e+03 & 1.15 & 7.84e+01 & 1.73 & 4.30e-01 & 1.51 & 3.71e+03 & 1.15 & 1.22e-01\\
 & 1134 & 1.28e+03 & 1.04 & 2.51e+01 & 1.64 & 1.91e-01 & 1.17 & 1.81e+03 & 1.04 & 1.19e-01\\
 & 4442 & 6.36e+02 & 1.01 & 6.94e+00 & 1.85 & 9.25e-02 & 1.05 & 8.98e+02 & 1.01 & 1.18e-01\\
 &17586 & 3.17e+02 & 1.00 & 2.90e+00 & 1.26 & 4.59e-02 & 1.01 & 4.48e+02 & 1.00 & 1.18e-01\\
 &69986 & 1.59e+02 & 1.00 & 1.83e+00 & 1.03 & 2.29e-02 & 1.00 & 2.24e+02 & 1.00 & 1.18e-01\\
 \midrule
	\multirow{6}{*}{1} & 
   204 & 1.36e+03 & * & 6.05e+01 & * & 3.36e-01 & * & 1.92e+03 & * & 6.60e-02 \\
&   774 & 3.45e+02 & 1.98 & 3.15e+01 & 0.94 & 8.67e-02 & 1.95 & 4.86e+02 & 1.98 & 6.92e-02\\
&  3018 & 8.85e+01 & 1.96 & 8.51e+00 & 1.89 & 2.15e-02 & 2.01 & 1.24e+02 & 1.96 & 6.96e-02\\
& 11922 & 2.22e+01 & 1.99 & 2.04e+00 & 2.06 & 5.38e-03 & 2.00 & 3.13e+01 & 1.99 & 6.97e-02\\
& 47394 & 5.57e+00 & 2.00 & 4.95e-01 & 2.04 & 1.35e-03 & 2.00 & 7.84e+00 & 2.00 & 6.98e-02\\
&188994 & 1.40e+00 & 2.00 & 1.23e-01 & 2.01 & 3.37e-04 & 2.00 & 1.96e+00 & 2.00 & 6.99e-02\\
\midrule
	\multirow{6}{*}{2} & 
   383 & 2.75e+02 & * & 5.29e+01 & * & 3.58e-02 & * & 3.81e+02 & * & 5.85e-02 \\
&  1476 & 3.25e+01 & 3.08 & 6.35e+00 & 3.06 & 3.09e-03 & 3.54 & 4.50e+01 & 3.08 & 4.32e-02\\
&  5798 & 3.73e+00 & 3.12 & 4.37e-01 & 3.86 & 3.74e-04 & 3.05 & 5.22e+00 & 3.10 & 3.96e-02\\
& 22986 & 4.52e-01 & 3.04 & 2.93e-02 & 3.90 & 4.65e-05 & 3.01 & 6.38e-01 & 3.03 & 3.85e-02\\
& 91538 & 5.61e-02 & 3.01 & 1.92e-03 & 3.93 & 5.81e-06 & 3.00 & 7.92e-02 & 3.01 & 3.82e-02\\
&365346 & 7.02e-03 & 3.00 & 2.20e-04 & 3.13 & 7.26e-07 & 3.00 & 9.90e-03 & 3.00 & 3.83e-02\\
\bottomrule
	\end{tabular}
	\caption{Example 1. Error history and effectivity indexes for polynomial degrees $k=0,1,2$,   going up to $T = 1$. Discretisation with continuous Biot fluid pressure.}\label{table:h}
\end{table}

\subsection{Verification of convergence to smooth solutions}\label{sec:verification} We manufacture a closed-form displacement and fluid pressure
\[ \bu = \begin{pmatrix} \sin(\pi [x+y])\\\cos(\pi[x^2+y^2]) \end{pmatrix}, \quad p^\mathrm{P} = \sin(\pi x+ y) \sin(\pi y),\]
which, together with $\varphi^\mathrm{P} = \alpha p^\mathrm{P} - \lambda^{\mathrm{P}} \vdiv\bu$, $ \varphi^\mathrm{E} = - \lambda^{\mathrm{E}} \vdiv\bu$, constitute the 
solutions to \eqref{eq:coupled}. For this test we consider the unit square domain $\Omega = (0,1)^2$ divided into $\Omega^{\mathrm{E}} = (0,1)\times(0.5,1)$ and $\Omega^{\mathrm{P}} = (0,1)\times(0,0.5)$ and separated by the interface $\Sigma=(0,1)\times\{0.5\}$. The boundaries are taken as $\Gamma^{\mathrm{E}}_D = \partial\Omega^{\mathrm{E}}\setminus \Sigma$ and $\Gamma^{\mathrm{P}}_D = \partial\Omega \setminus \Gamma^{\mathrm{E}}_D$, which implies that a real Lagrange multiplier is required constraining the mean value of the global pressure to coincide with the exact value. The parameter values are taken as follows 
\begin{gather*} 
\alpha= 1, \quad  \mu^\mathrm{P}=10, \quad \lambda^\mathrm{P} =2\cdot 10^4 , \quad  \mu^\mathrm{E}=20, \quad \lambda^\mathrm{E} = 10^4, \\
c_0 = 1, \quad \kappa = 1, \quad \eta =1, \quad \gamma = 1, \quad \Delta t = 1, \quad T = 1, \quad \beta_{\bu} =\beta_{p^\mathrm{P}} = 2.5\cdot 10^{2k+1}. 
\end{gather*}
We note that the stress on the interface $\Sigma$ is not continuous. As a result, we must add the following term:
\[
\sum_{e \in \mathcal{E}_h^\Sigma}
\left( \mean{\boldsymbol{v}} , \jump{  (2 \mu \boldsymbol{\varepsilon}(\boldsymbol{u}) -  \varphi \bI)\nn} \right)_{0,e},
\] 
to the right-hand side of (\ref{semidis11}) and (\ref{semidis12}) evaluated at the exact solution. We must also include additional terms for non-homogeneous Neumann and Dirichlet boundary conditions.

For the discretisation using continuous fluid pressure approximation, errors between exact and approximate solutions are computed using the norms
\begin{gather*}
  {\tt e}(\bu,p^\mathrm{P},\varphi) := \VERT(\bu-\bu_h,p^\mathrm{P}-p^\mathrm{P}_h,\varphi-\varphi_h)\VERT, \quad  {\tt e}_{\ast}(\bu) := \|\bu-\bu_h\|_{*,\cT_h}, \\
 e(p^\mathrm{P}):= (c_0+ \alpha^2/\lambda^{\mathrm{P}})\|p^\mathrm{P}-p^\mathrm{P}_h\|_{0,\Omega^{\mathrm{P}}} + \frac{\kappa}{\eta} \|\nabla_h(p^\mathrm{P}-p^\mathrm{P}_h)\|_{0,\Omega^{\mathrm{P}}}, \quad {\tt e}(\varphi) := \frac{1}{\mu^{\mathrm{E}}}\|\varphi-\varphi_h\|_{0,\Omega^{\mathrm{E}}} + \frac{1}{\mu^{\mathrm{P}}}\|\varphi-\varphi_h\|_{0,\Omega^{\mathrm{P}}}, 
\end{gather*}
while for discontinuous pressure approximations the following norms are modified 
\[  {\tt e}_{\ast}(\bu,p^\mathrm{P},\varphi) := \VERT(\bu-\bu_h,p^\mathrm{P}-p^\mathrm{P}_h,\varphi-\varphi_h)\VERT_{\ast}, \quad  
e_{\ast}(p^\mathrm{P}):=  \|p^\mathrm{P}-p^\mathrm{P}_h\|_{*,\Omega^\mathrm{P}}.
\]
The experimental rates of convergence   are computed as 
\[{\tt r}  =\log({\tt e}_{(\cdot)}/\tilde{{\tt e}}_{(\cdot)})[\log(h/\tilde{h})]^{-1}, \]
where ${\tt e},\tilde{{\tt e}}$ denote errors generated on two consecutive  meshes of sizes $h$ and~$\tilde{h}$, respectively. 
Such an error history is displayed   in Tables~\ref{table:h}-\ref{table:h-dg}. In these cases we note that uniform mesh refinement is sufficient to obtain optimal convergence rates of $\mathcal{O}(h^{k+1})$ in the corresponding broken energy norm (we also tabulate the individual errors in their natural norms). These results are consistent with the theoretical error estimates derived in Theorem \ref{th:apriori}. 

\begin{table}[t]
	\centering
\begin{tabular}{||cr|gg|cc|cc|cc|c||}
	\toprule 
$k$ &{\tt DoF}& ${\tt e}_{\ast}(\bu,p^\mathrm{P},\varphi)$ & {\tt rate} &  ${\tt e}_{\ast}(\bu)$ &  {\tt rate} & ${\tt e}_{\ast}(p^\mathrm{P})$ &  {\tt rate} 
& ${\tt e}(\varphi)$ & {\tt rate}  & ${\tt eff}(\Xi)$\\
\midrule
\multirow{5}{*}{0}&     97 & 5.84e+03 & * & 2.61e+02 & * & 6.94e-01 & * & 8.24e+03 & * & 1.25e-01\\
&   369 & 2.63e+03 & 1.15 & 7.84e+01 & 1.73 & 3.57e-01 & 0.96 & 3.71e+03 & 1.15 & 1.22e-01\\
&  1441 & 1.28e+03 & 1.04 & 2.51e+01 & 1.64 & 1.81e-01 & 0.98 & 1.81e+03 & 1.04 & 1.19e-01\\
&  5697 & 6.36e+02 & 1.01 & 6.94e+00 & 1.85 & 9.14e-02 & 0.99 & 8.98e+02 & 1.01 & 1.18e-01\\
& 22657 & 3.17e+02 & 1.00 & 2.90e+00 & 1.26 & 4.66e-02 & 0.97 & 4.48e+02 & 1.00 & 1.18e-01\\
 \midrule
	\multirow{5}{*}{1} &    229 & 1.36e+03 & * & 6.06e+01 & * & 1.48e-01 & * & 1.92e+03 & * & 6.60e-02\\
&   889 & 3.45e+02 & 1.98 & 3.15e+01 & 0.95 & 4.19e-02 & 1.83 & 4.85e+02 & 1.98 & 6.93e-02\\
&  3505 & 8.85e+01 & 1.96 & 8.51e+00 & 1.89 & 1.09e-02 & 1.94 & 1.24e+02 & 1.96 & 6.97e-02\\
& 13921 & 2.22e+01 & 1.99 & 2.03e+00 & 2.06 & 2.79e-03 & 1.97 & 3.13e+01 & 1.99 & 6.98e-02\\
& 55489 & 5.57e+00 & 2.00 & 4.95e-01 & 2.04 & 7.43e-04 & 1.91 & 7.84e+00 & 2.00 & 6.98e-02\\
	\midrule
	\multirow{5}{*}{2} &    417 & 2.73e+02 & * & 5.29e+01 & * & 2.46e-02 & * & 3.78e+02 & * & 4.06e-02 \\
&  1633 & 3.24e+01 & 3.07 & 6.35e+00 & 3.06 & 2.94e-03 & 3.07 & 4.49e+01 & 3.07 & 4.02e-02\\
&  6465 & 3.72e+00 & 3.12 & 4.37e-01 & 3.86 & 3.69e-04 & 2.99 & 5.22e+00 & 3.10 & 3.95e-02\\
& 25729 & 4.52e-01 & 3.04 & 2.94e-02 & 3.89 & 4.84e-05 & 2.93 & 6.37e-01 & 3.03 & 3.85e-02\\
& 102657 & 5.63e-02 & 3.02 & 7.81e-03 & 3.10 & 5.91e-06 & 2.97 & 8.16e-02 & 3.01 & 3.84e-02\\
\bottomrule
	\end{tabular}
	\caption{Example 1. Error history and effectivity indexes for polynomial degrees $k=0,1,2$,   going up to $T = 1$. Discretisation with discontinuous Biot fluid pressure.}\label{table:h-dg}
\end{table}

The robustness of the \emph{a posteriori} error estimators is quantified in terms of the effectivity index of the  indicator   
\[ \texttt{eff}(\Xi) =  (\texttt{e}_{\ast}(\bu)^2 + \texttt{e}(p^\mathrm{P})^2+ \texttt{e}(\varphi)^2)^{1/2}/ \Xi,\]
(or $\texttt{eff}(\Xi) =  (\texttt{e}_{\ast}(\bu)^2 + \texttt{e}_{\ast}(p^\mathrm{P})^2+ \texttt{e}(\varphi)^2)^{1/2}/ \Xi$ in the case of discontinuous fluid pressures) and $\texttt{eff}$ is expected to remain constant independently of the number of degrees of freedom associated with each mesh refinement. In both tables  the effectivity index is asymptotically constant for all polynomial degrees. This fact confirms the efficiency and reliability of the estimator. 
Similar results are also obtained even when the Poisson ratio in each subdomain is close to 0.5.


\begin{figure}[t!]
  \begin{center}
    \includegraphics[width=0.325\textwidth]{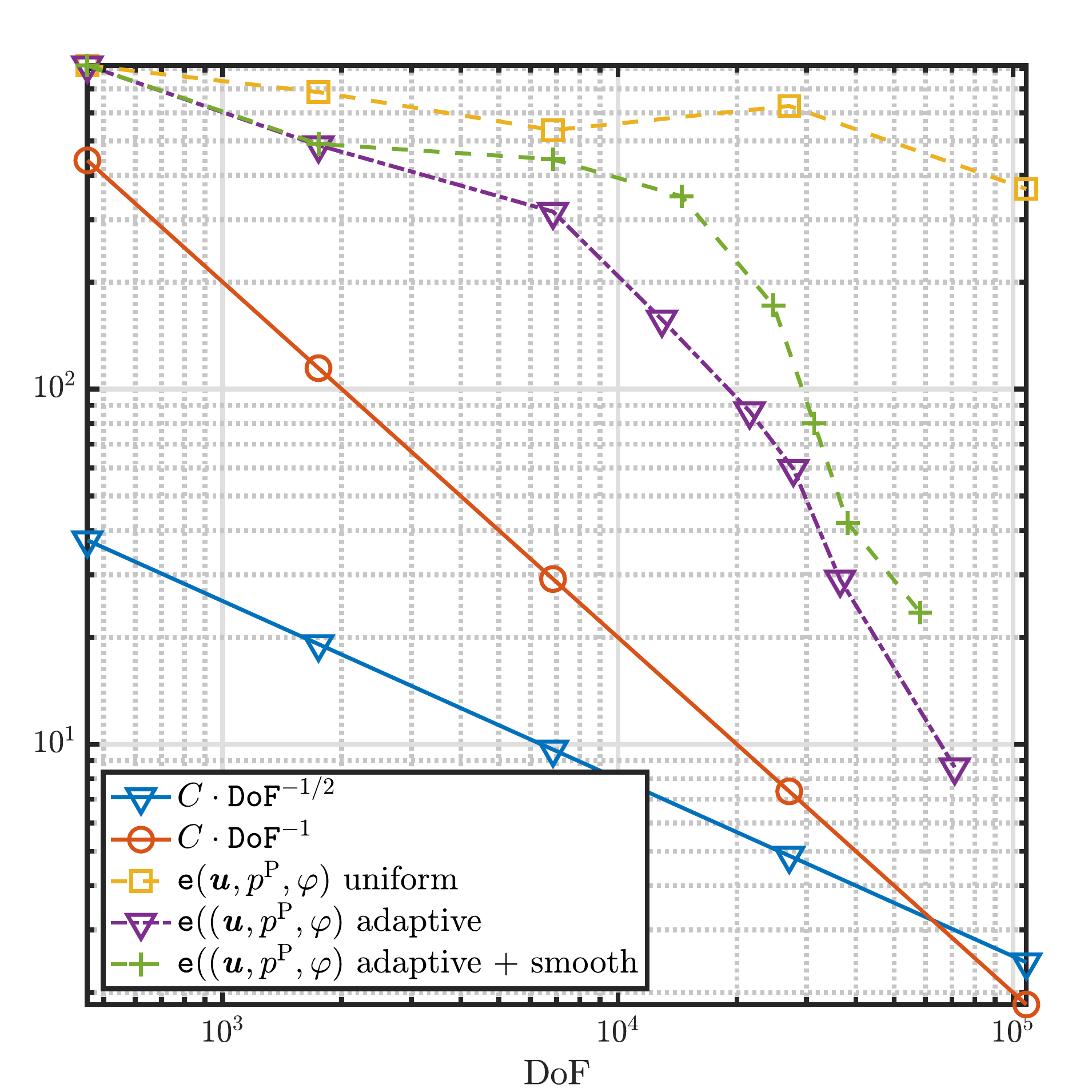}
    \includegraphics[width=0.325\textwidth]{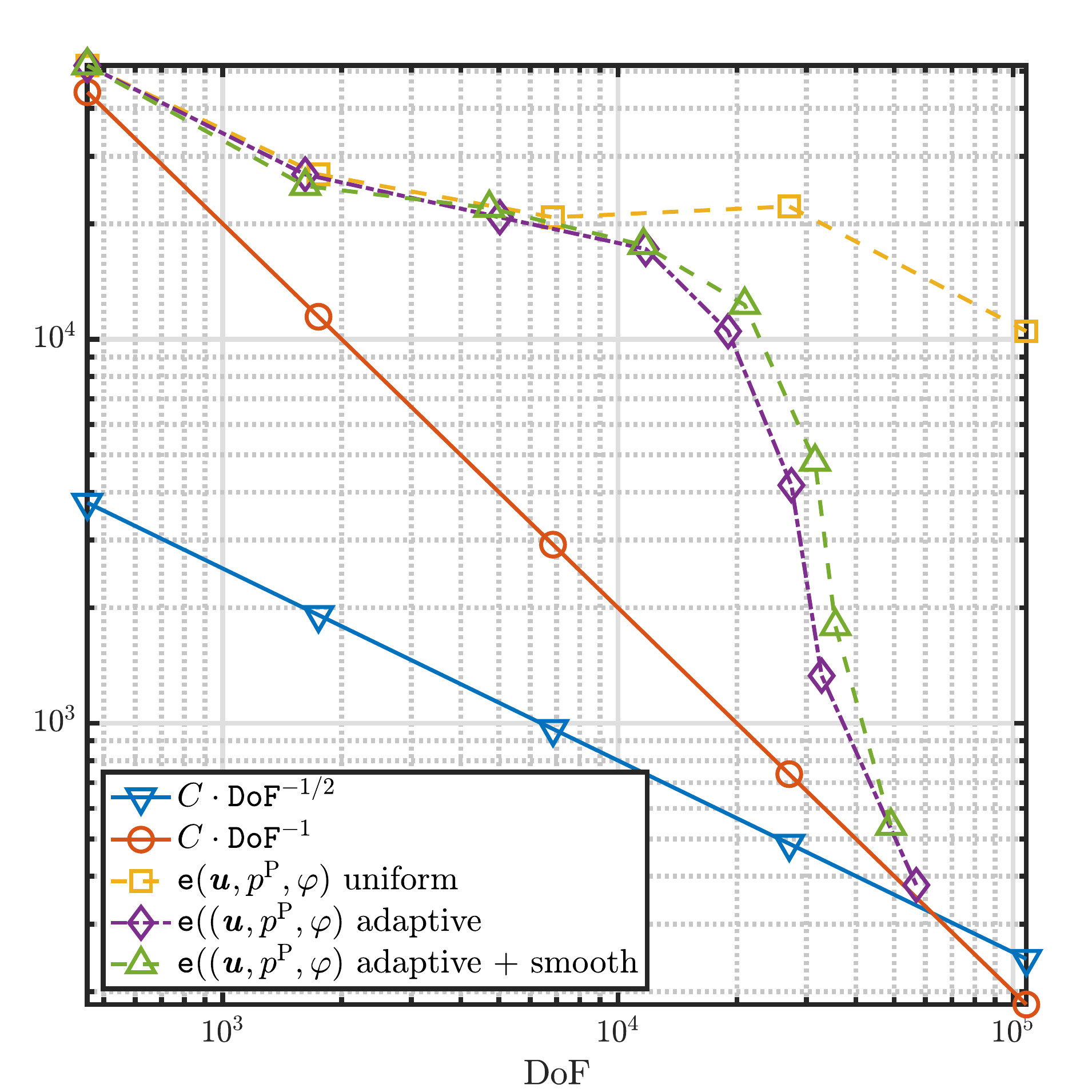}
    \includegraphics[width=0.325\textwidth]{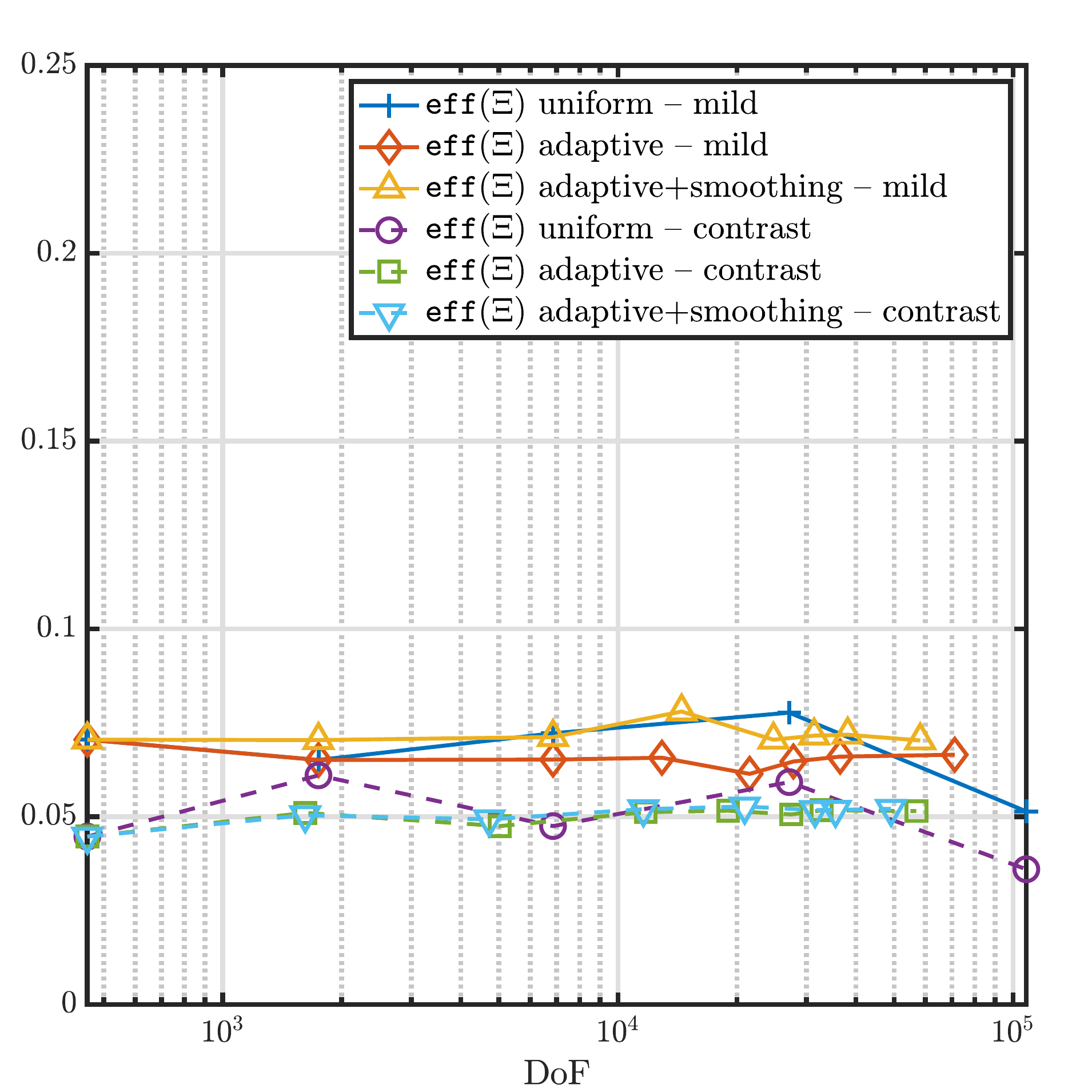}
  \end{center}
  \caption{Example 2. Error decay and effectivity indexes for the convergence test on an L-shaped domain using mild and high-contrast elastic parameters with $k=1$.}\label{fig2:tables}
  \end{figure}

\subsection{Verification of \emph{a posteriori} error estimates}
To assess the performance of the proposed estimators, we use the L-shaped domain $\Omega = (-1,1)^2\setminus (0,1)^2$, the  interface is zig-zag-shaped and going from the reentrant corner $(0,0)$ to the bottom-left corner of the domain $(-1,-1)$, and the porous domain is the one above the interface. We consider manufactured solutions with high gradients near the reentrant corner 
\[
\bu = 10^{-2}\begin{pmatrix} ((x-x_a)^2+(y-y_a)^2)^{-2/3}\\((x-x_a)^2+(y-y_a)^2)^{-2/3}\end{pmatrix}, \quad p^{\mathrm{P}} = ((x-x_a)^2+(y-y_a)^2)^{-2/3},
\]
with $(x_a,y_a)= (0.01,0.01)$. 
We employ adaptive mesh refinement consisting in the usual steps of solving, then computing the local and global estimators, marking, refining, and smoothing. The marking of elements for refinement follows the classical D\"orfler approach \cite{dorfler_sinum96}: a given $K\in \cT_h$ is \emph{marked} (added to the marking set $\cM_h\subset\cT_h$)  whenever the local error indicator $\Xi_K$ satisfies 
\[ \sum_{K \in \cM_h} \Xi^2_K \geq \zeta \sum_{K\in\cT_h} \Xi_K^2,\]
where $\zeta$ is a user-defined bulk density parameter. All edges in the elements in $\cM_h$ are marked for refinement. Additional edges are marked for the sake of closure, and an additional smoothing step (Laplacian smoothing  on the refined mesh to improve the shape regularity of the new mesh) is applied before starting a new iteration of the algorithm. When computing convergence rates under adaptive mesh refinement, we use the expression 
\[\texttt{r}_{(\cdot)} = -2\log(e_{(\cdot)}/\tilde{e}_{(\cdot)})[\log({\tt DoF}/\widetilde{{\tt DoF}})]^{-1}.\]

\begin{figure}[t!]
  \begin{center}
    \includegraphics[width=0.325\textwidth]{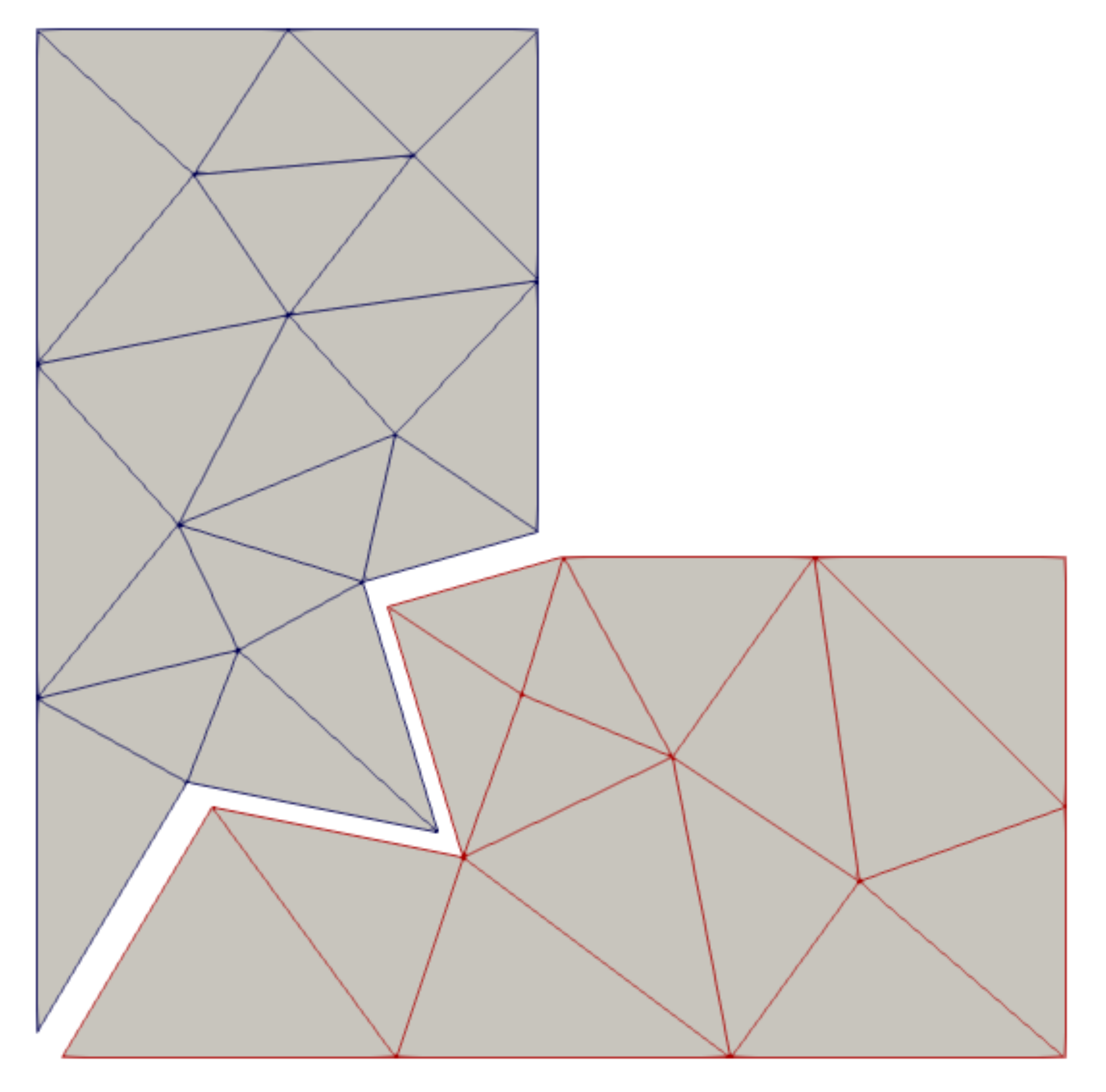}
    \includegraphics[width=0.325\textwidth]{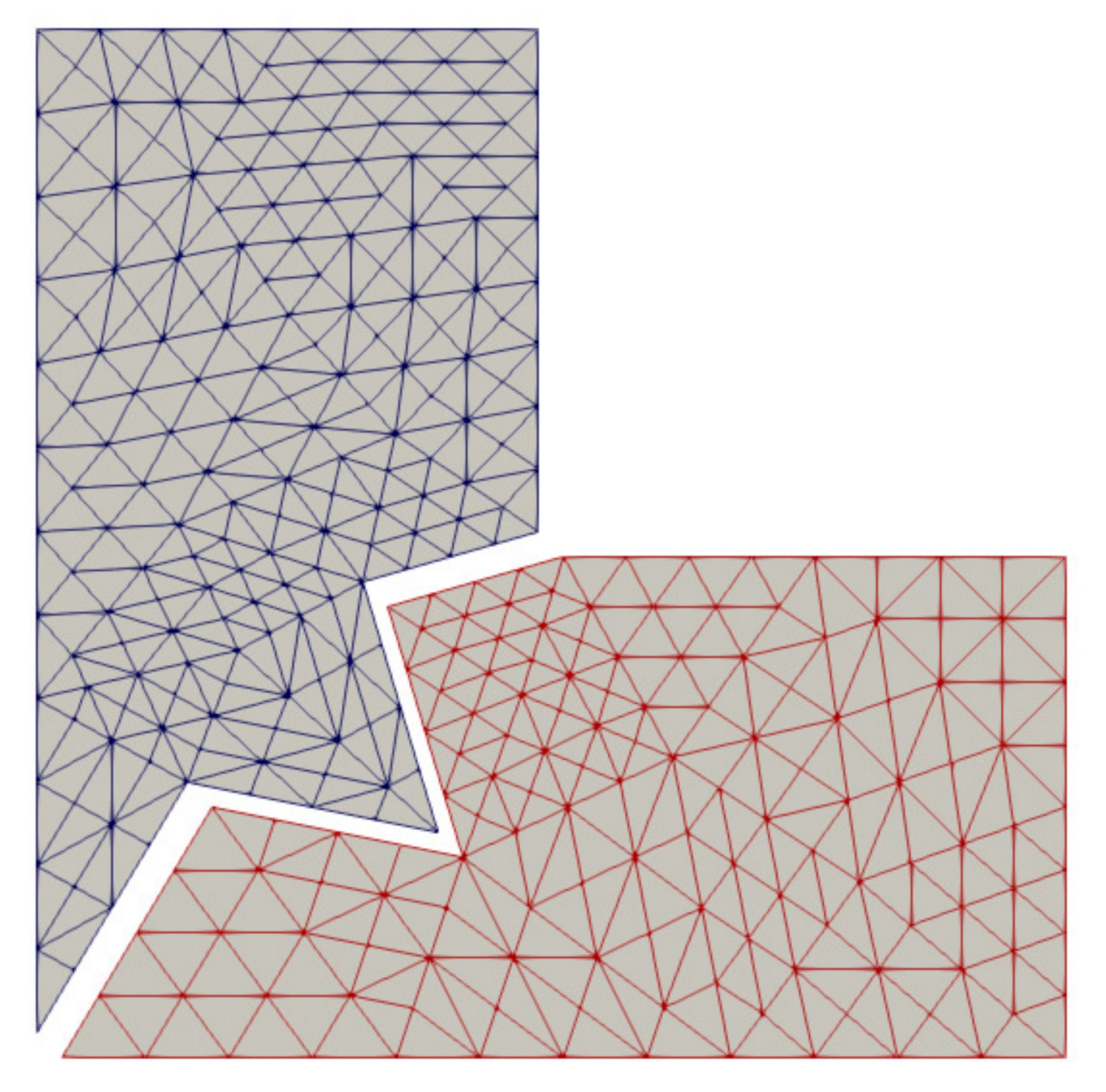}
    \includegraphics[width=0.325\textwidth]{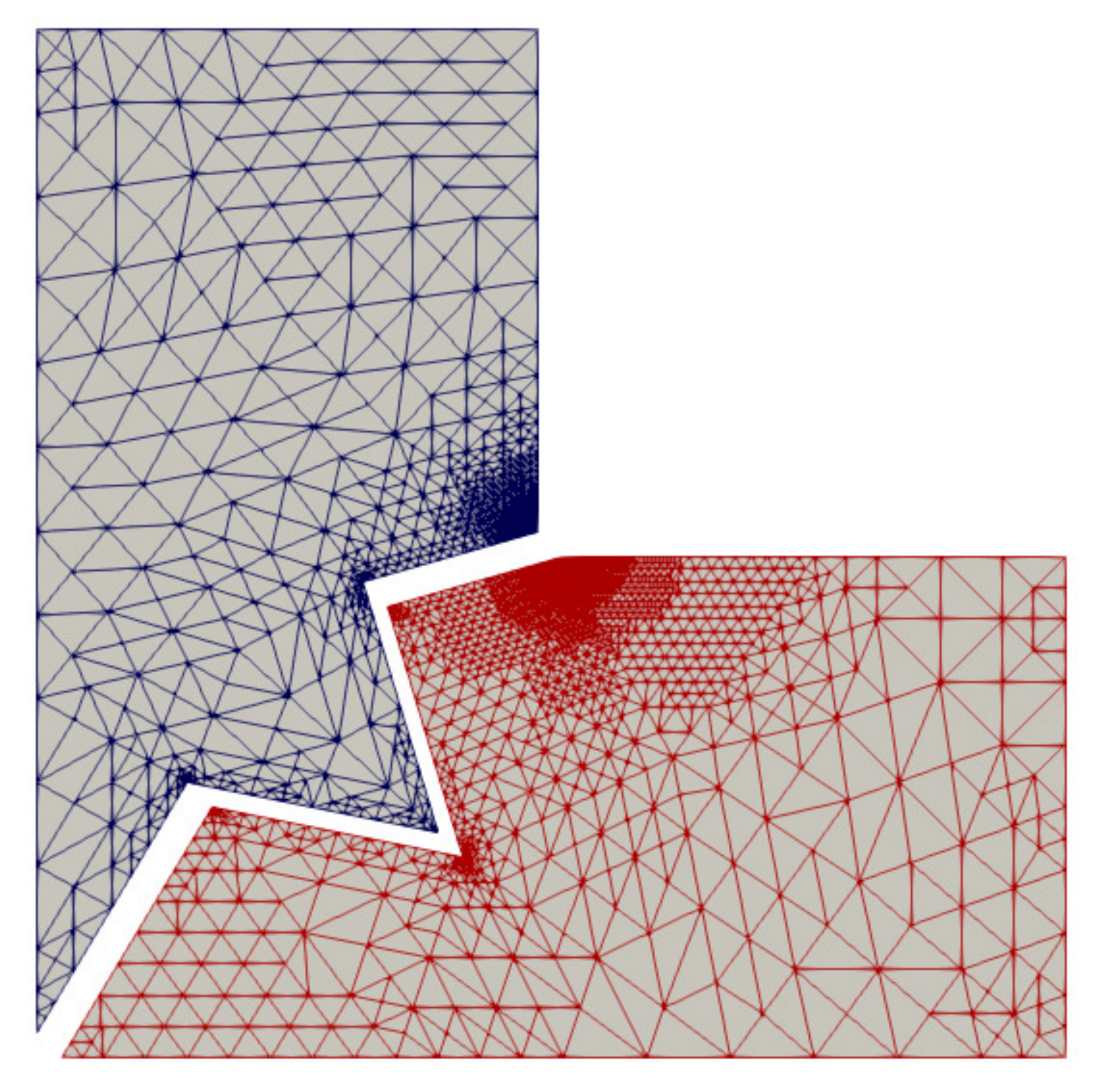}\\
    \includegraphics[width=0.325\textwidth]{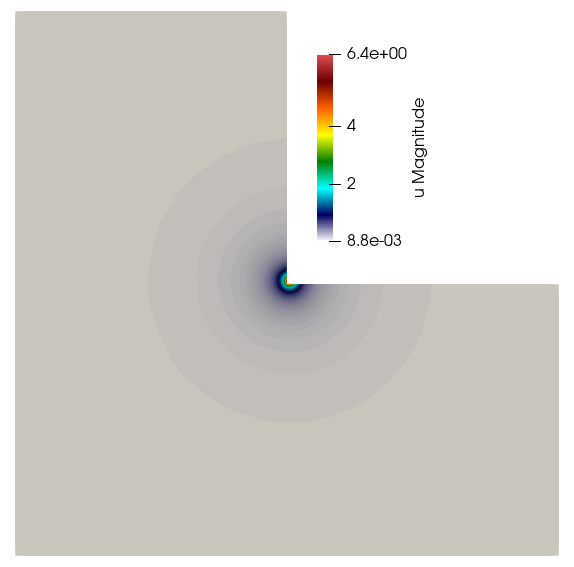}
    \includegraphics[width=0.325\textwidth]{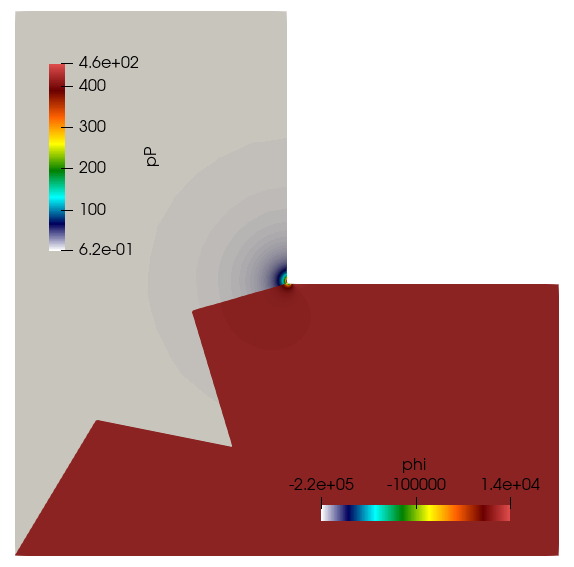}
    \includegraphics[width=0.325\textwidth]{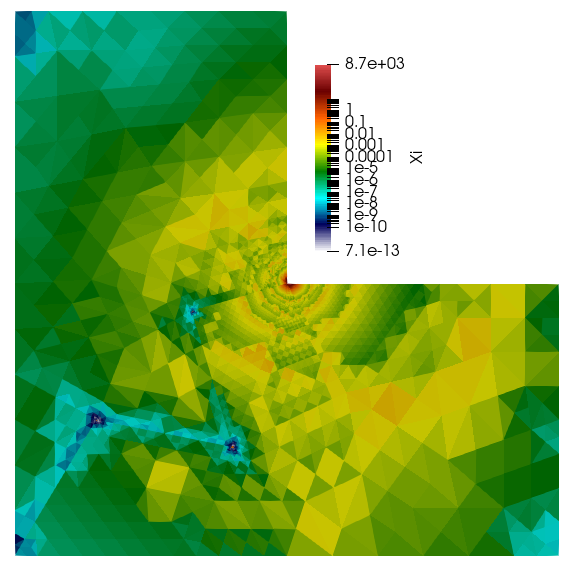}
  \end{center}
  \caption{Example 2. Initial meshes for poroelastic and elastic subdomains, and meshes after 2 and 7 steps of adaptive refinement guided by $\Xi$ (top).
The bottom row shows, at the finest level and for the case without mesh smoothing, the approximate global displacement, Biot fluid pressure, and the cell-wise value of the \emph{a posteriori} error indicator. Here we use $k=1$.}\label{fig2:meshes}
  \end{figure}

We set the following parameter values 
$c_0 = 0.01$, $\alpha = 0.5$, $\eta = 0.01$, $\kappa = 10^{-3}$ and consider two cases for the Young and Poisson moduli:
first $E^\mathrm{E} = 10$, $E^\mathrm{P} = 100$, $\nu^\mathrm{E} = 0.495$, $\nu^\mathrm{P} = 0.4$, and secondly larger contrast:   $E^\mathrm{E} = 1000$, $E^\mathrm{P} = 10$, $\nu^\mathrm{E} = 0.499$, $\nu^\mathrm{P} = 0.25$. Moreover, we only use the polynomial degree $k=1$, $\beta_{\bu} = 500$, and  $\zeta = 10^{-7}$. For this case we consider continuous fluid pressure approximations. The error history is presented in the left and centre panels of  Figure~\ref{fig2:tables}. There we plot the error decay vs the number of degrees of freedom for the case of uniform mesh refinement, and adaptive mesh refinement with or without a smoothing step, and using the mild vs high contrast mechanical parameters. For comparison we also plot an indicative of the orders $\mathcal{O}(h)$ and $\mathcal{O}(h^2)$ (thanks to the relation $\texttt{DoF}^{-1/d}\lesssim h \lesssim \texttt{DoF}^{-1/d}$, in 2D we take $C\,\texttt{DoF}^{-1/2}$ and $C\,\texttt{DoF}^{-1}$, respectively).  
We note that for high contrast parameters, the performance of the three methods is very similar. However, as the mesh is refined, for roughly the same computational cost, the two adaptive methods render a much better approximate solution. Another observation is that the effectivity indexes (plotted in the right panel) have a slightly higher oscillation than in the adaptive cases, but overall they do not show a systematic increase/decrease. 
We also show samples of adaptive meshes in Figure~\ref{fig2:meshes}.
The \emph{a posteriori} error indicator correctly identifies and guides the agglomeration of elements near the zones of high gradients (the reentrant corner), plus the zones where the contrast occurs (the interface corners). The figure also portrays examples of approximate solutions together with the value of $\Xi_K$ locally.

\begin{figure}[t!]
  \begin{center}
   \includegraphics[width=0.325\textwidth]{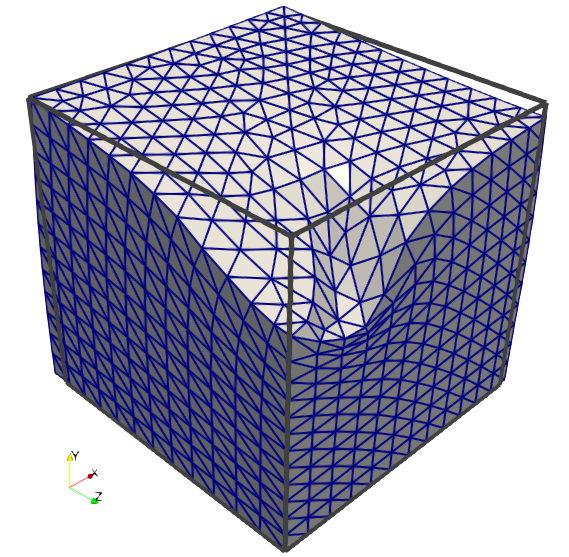}
   \includegraphics[width=0.325\textwidth]{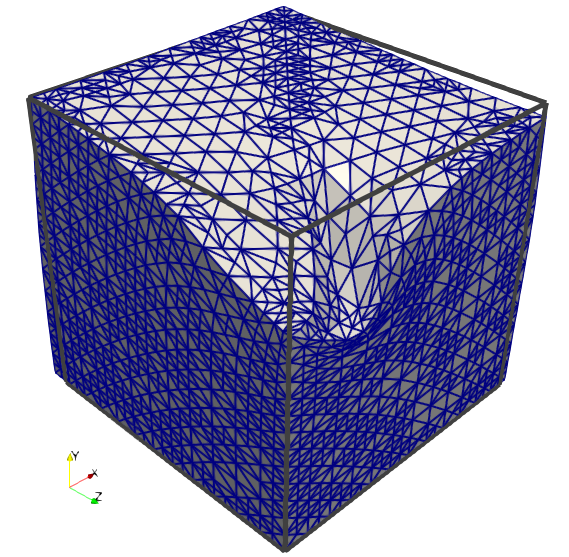}
         \includegraphics[width=0.325\textwidth]{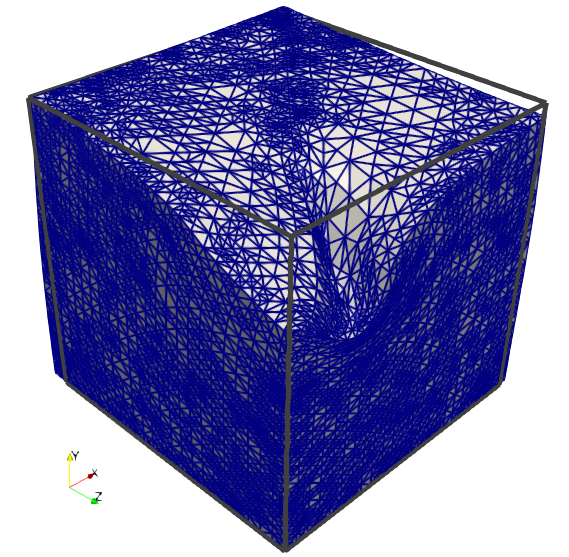}\\   
   \includegraphics[width=0.325\textwidth]{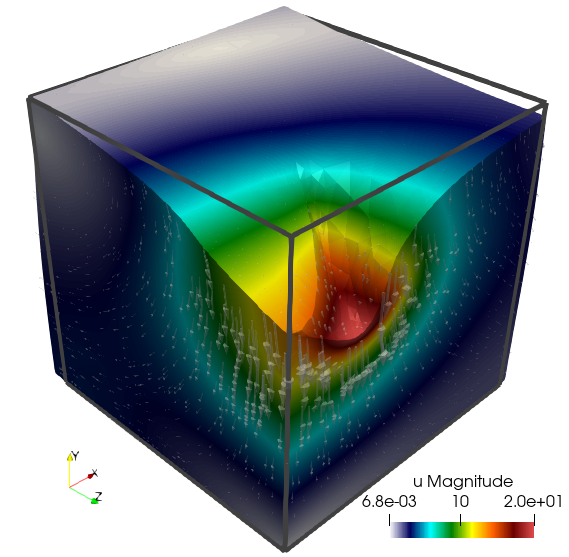}
      \includegraphics[width=0.325\textwidth]{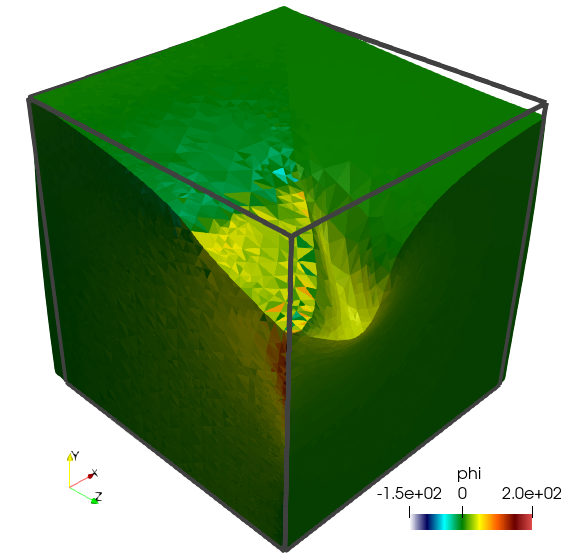}
  \raisebox{-2mm}{\includegraphics[width=0.325\textwidth]{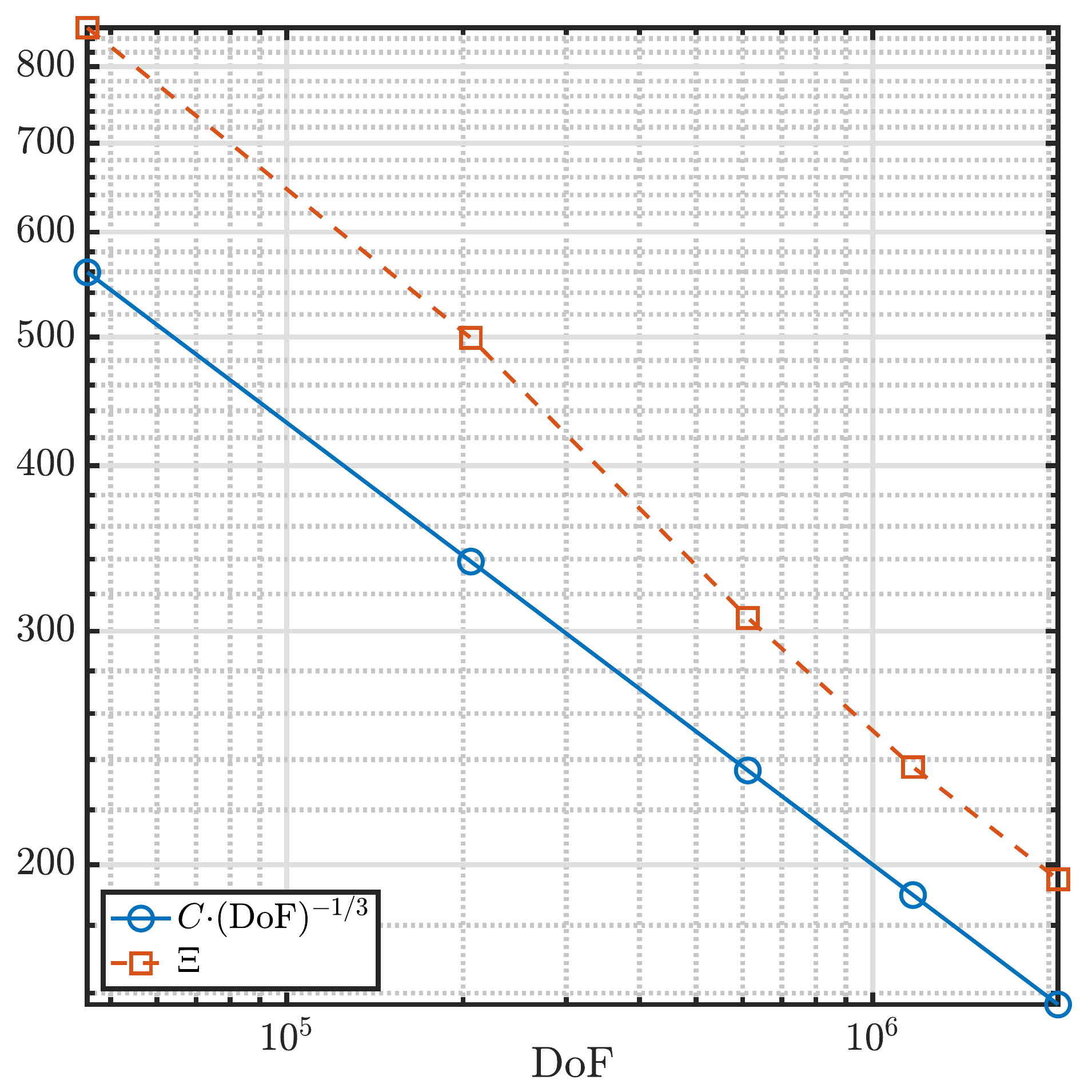}}
  \end{center}
  \caption{Example 3. Meshes on the deformed domain after 1, 2, and 4 steps of adaptive refinement guided by $\Xi$. The bottom row shows, at the finest level, the approximate displacement and total pressure on both subdomains, as well as the convergence of the global \emph{a posteriori} error estimator.}\label{fig3:cube}
  \end{figure}

\subsection{A simple simulation of indentation in a 3D layered material}
We now consider the punch problem (the drainage of a body  by an induced compression loading \cite{cihan21,kumar20}). The full  domain is $\Omega = (0,50)^3$\,mm$^3$, and it is equi-separated into elastic and poroelastic subdomains by a diagonal plane. The elastic moduli, hydromechanical coupling constants, and fluid model parameters are
\begin{gather*}
E^\mathrm{E} = 50\,\text{kN/mm}^2, \quad E^\mathrm{P} = 210\,\text{kN/mm}^2, \quad \nu^\mathrm{E} = 0.3, \quad \nu^\mathrm{P} = 0.499, \\
 \alpha =0.85, \quad c_0 = 0.1, \quad \kappa = 10^{-3}\,\text{mm},\quad \eta = 10^{-2} \text{kN/mm}^2\text{s}.
\end{gather*}
A normal surface load is applied on a quarter of the plane $y = 50$\,mm, near the corner $(0,50,50)$, where the traction has magnitude 60\,N/mm$^2$. On the three planes $x=0$, $y=0$, and $z=50$\,mm we prescribe zero normal displacement $\bu\cdot\nn = 0$ together with an influx condition $\frac{\kappa}{\eta}\nabla p^{\mathrm{P}}\cdot\nn = -1.7$\,mm/s, and on the remainder of the boundary we set stress-free conditions for the solid phase and zero Biot pressure. 
Figure~\ref{fig3:cube} shows the deformed configuration after a few steps of mesh adaptation, which clearly illustrates the jump in material properties.   The meshes are more densely refined near the interface, indicating that the estimator captures correctly the error in these regions. The bottom plots suggest a difference in compliance, as observed across the interface, the deformations are more pronounced in the elastic domain. Note that the method guided by the \emph{a posteriori} error estimate is particularly effective in capturing high solution gradients (of global displacement and of global total pressure). 
The rightmost panel of the figure also shows, in log-log scale, the decay of the global error estimator $\Xi$ vs the number of degrees of freedom, for comparison we also plot an indicative of the mesh size, for 3D, $C\,{\tt DoF}^{-\frac13}$, which confirms that the estimator converges at least with $\mathcal{O}(h)$ to zero.  Note that we are using here the lowest-order method $k=0$ with continuous fluid pressure approximation, and the mesh agglomeration parameter is chosen as $\zeta = 0.02$.

\subsection{A test with realistic model parameters (application to brain multiphysics)} \label{sec:application_problem}

The Biot-elasticity system described in this paper is useful in a range of applications. We show here, as an example, how it can be used to calculate the displacement in a system consisting of the wall of a penetrating vessel and the interstitium surrounding it. A proper network of vessels is shown in \cite{goirand2021network,DVNQJDUUA_2021}, but we will do a simplified 2D illustration. Here, the vessel is T-shaped with the interstitium in a surrounding box. The boundaries are divided into Dirichlet and Neumann boundaries for both the vessel wall and the interstitium domains. The bottom boundary of the vessel wall and the top and bottom boundary of the interstitium have Dirichlet boundaries while the side boundaries for both the vessel wall and interstitium have Neumann boundaries. The mesh is shown in Figure \ref{fig:transient_solution} (top left). The displacement is driven by a pressure wave along the inside of the vessel wall, represented as a sinus wave with a period of one second and a maximum value of 1 kPa. The value is taken from \cite{vinje2019respiratory} where the intraventricular intracranial pressure is reported to be mostly in the range 0.1-1 kPa. The vessel wall for rats are reported to be between 3.8 and 5.8 $\mu$m and the diameter of the vessels is reported to be 43 and 63 $\mu$m \cite{baumbach1988mechanics}.  We choose the vessel to have a diameter of 50 $\mu$m while the vessel wall have a thickness of 5 $\mu$m in this example. The Lame parameters in the interstitium are $\mu= 1$ kPa and $\lambda= 1$ MPa in our example. This is in the range of $\mu=[590, 2.5\cdot 10^3]$ Pa and $\lambda=[529, 1.0\cdot10^{11}]$ Pa given in \cite{smith2007interstitial}. In the vessel wall, they are chosen to be 1 MPa and 1 GPa respectively, which is similar to the reported ranges of $\mu=[3.3\cdot 10^3, 8.2 \cdot 10^5]$ Pa and $\lambda=[3.0\cdot 10^4, 3.4 \cdot 10^{12}]$ Pa given in \cite{bergel1961static, smith2007interstitial}. Additionally, the permeability is 100 nm$^2$ in the interstitium which within the ranges of 10 to 2490 nm$^2$ presented in \cite{holter2017interstitial, smith2007interstitial}. We use mixed boundary conditions as follows
\begin{subequations}
\begin{align}
	[2{\mu^\mathrm{E}} \beps(\bu^\mathrm{E})
	- \varphi^\mathrm{E}\bI]\nn^{\partial\OmE} &= 0 \text{ on } \Gamma^{P}_{N} \\
	\bu^\mathrm{E} &= 0 \text{ on } \Gamma^{P}_{D} \\
	[2{\mu^\mathrm{P}} \beps(\bu^\mathrm{P})
	- \varphi^\mathrm{P}\bI]\nn^{\partial\OmP} &= 0 \text{ on } \Gamma^{P}_{N} \\
	\frac{1}{\eta}\langle \kappa \nabla p^P\cdot \nn,q^P\rangle_{\partial\Omega_P} &= 0  \text{ on } \Gamma^{P}_{N} \\
	\bu^\mathrm{P} &= 0 \text{ on } \Gamma^{P}_{D},
\end{align}\end{subequations}
where $\Gamma^{E}_N$ and $\Gamma^{E}_D$ is the Neumann and Dirichlet boundaries for the vessel wall and $\Gamma^{P}_N$ and $\Gamma^{P}_D$ is the Neumann and Dirichlet boundaries for the interstitium. The previously mentioned pressure wave is described by
\begin{align}
[2{\mu^\mathrm{E}} \beps(\bu^\mathrm{E})
- \varphi^\mathrm{E}\bI]\nn^{\partial\OmE} &= g(t)\nn^{\partial\OmE}\ \ \text{on}\ \Gamma_{\text{traction}},
\end{align}
where $\Gamma_{\text{traction}}$ is the inside of the vessel wall and $g(t)=10^3\cdot\sin(2\pi t)$. The simulation runs over 0.5 seconds which encompass a full expansion and contraction of the vessel wall by the pressure sine wave.

\begin{figure}[t!]
	\begin{center}
		\includegraphics[width=0.89\textwidth]{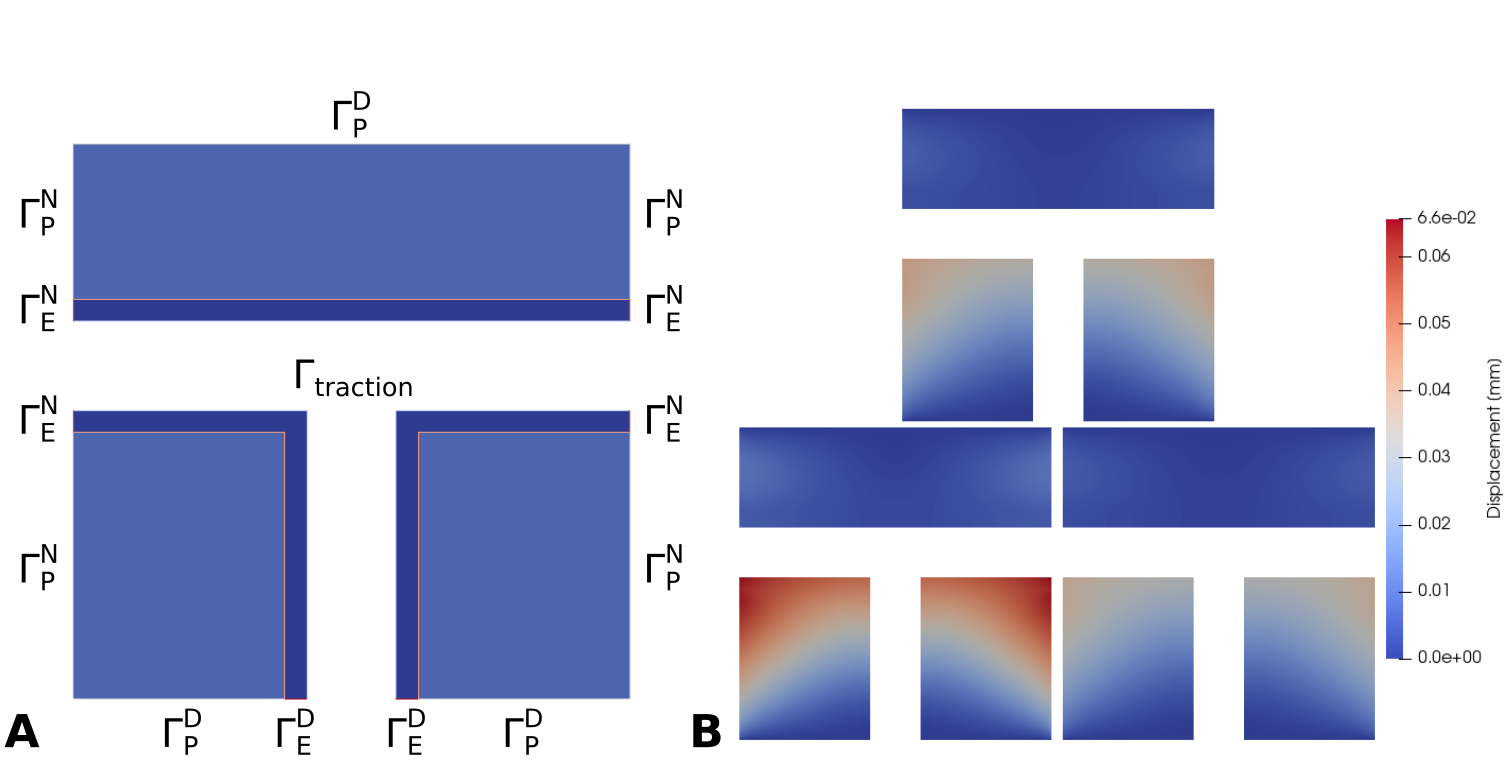}
	\end{center}
	\caption{The solution of the Biot-elasticity system on a T-vessel domain. \textbf{A)} shows the domain with boundaries. The dark blue is the vessel wall and the lighter blue is the interstitium. The top and the bottom of the interstitium domain and the bottom of the vessel wall domain are Dirichlet boundaries, while the sides of both the interstitium and the vessel wall are Neumann boundaries. \textbf{B)} shows the displacement after 0.10 seconds (top), 0.25 seconds (bottom left) and 0.40 seconds (bottom right).}
	\label{fig:transient_solution}
\end{figure}
We time discretize the transient problem using Crank-Nicolson's method with a constant time step $\Delta t = 0.01$ seconds, and solve the problem at each step with a sparse direct solver. From the solution, Figure \ref{fig:transient_solution}, we observe that the pressure wave causes the displacement to spread through the vessel wall and into the interstitium. As the pressure wave goes back to zero, the displacement in the interstitium relaxes back to zero as well. The maximum displacement is 66 $\mu$m. This displacement is quite large considering the maximum arterial wall velocity is 18-25 $\mu$m/s in mice, as reported in \cite{mestre2018flow}.

\subsection{Evaluation of preconditioning robustness}\label{sec:exp_robustness}
 
  We thoroughly evaluated the robustness of $\cP_h$ in \eqref{eq:Ph} for a wide span of physical parameter-value ranges of interest, different mesh resolutions, H(div)-conforming approximations for the displacements (in particular, BDM and Raviart--Thomas), different rectangular domain shapes, and elastic and poroelastic rectangular subdomain shapes. 
  Overall, our results confirm  an asymptotically constant number of MINRES iterations with mesh resolution even with material parameters that exhibit very large jumps accross the interface (e.g., we tested up to 3 orders of magnitude jumps in $\mu$ and $\lambda$), and/or very small or very large values
  (e.g.,  $\kappa\in[10^{-3},10^{-5},10^{-7}]\text{ m}^2$; $\lambda,\mu \in[1,10^3,10^6,10^9]$ Pa), including the extreme cases of near incompressibility, near impermeability, and near zero storativity. We note, however, that the 
  value of the penalty parameter $\beta_{\bu}$ has to be chosen carefully (typically via numerical experimentation), as it can have a significant impact on preconditioner efficiency.

  For conciseness, in this section, we only show results for the particularly challenging (and realistic) combination of physical parameter values corresponding to the problem in Section~\ref{sec:application_problem}. We use the upper part of the domain in this problem, namely the rectangular domain $\Omega = [0,0.25] \times [0.17,0.25]$, with elastic subdomain spanning the 
  thin stripe $[0,0.25] \times [0.17,0.1705]$. As usual, the poroelastic domain is defined as the complement of 
  the elastic domain. We used a triangular uniform mesh generator parametrizable by the number of layers of triangles in the thinner dimension of the elastic domain, which we refer to as $\ell$. Note that $h=0.05/\ell$. We tested in particular with three mesh resolutions corresponding to $\ell=2,4,8$. We solve problem \eqref{semidis11} with the known manufactured solution described in Section~\ref{sec:verification}. We report results only for BDM with $k=0$, although we stress that the number of iterations obtained for  Raviart-Thomas with $k=1$ were very similar to those reported herein. The preconditioned MINRES solver is used in conjunction with $\cP_h$ in \eqref{eq:Ph}, and convergence is claimed whenever the Euclidean norm of the (unpreconditioned) residual of the whole system is reduced by a factor of $10^{6}$. The action of the preconditioner was computed by LU decomposition in all cases. As an illustration of the challenge at hand, for $\ell=2$, $\beta_{\bu}=20$, the condition number of the unpreconditioned system \eqref{semidis11} is approximately as large as $1.25\times 10^{26}$ (as computed by the \texttt{cond} Julia function). 
	By a suitable scaling of the system, this large number could be reduced to $1.25\times 10^{14}$ (i.e., by 12 orders of magnitude). In particular, we scale (\ref{eq:poro-momentum}) and (\ref{eq:Elast})   with $1/ \max (\mu^\mathrm{P},\mu^\mathrm{E})$ and solve for the scaled pressures $\varphi^{\mathrm{P,E}} \leftarrow \varphi^{\mathrm{P,E}}/ \max(\mu^\mathrm{P},\mu^\mathrm{E})$ and $p^\mathrm{P} \leftarrow p^{\mathrm{P}}/ \max(\mu^\mathrm{P},\mu^\mathrm{E})$; see also {\tt scale\_parameters} function in \cite{hdiv_biot_elasticity_paper_software} for full details. We stress that, while the number of preconditioned MINRES iterations
  to solve the original and scaled systems was almost equivalent in all cases tested, we observed a much more reliable behaviour of the error curves for the scaled equations, in particular, under the assumption of a relatively coarse fixed residual tolerance. Thus, in the sequel, we report the errors we obtained in the presence of such an scaling. Finally, in order to study the dependence of the number of iterations and error on the penalisation parameter $\beta_{\bu}$, we tested with different values of $\beta_{\bu} \in [10,5000]$.

 In Figure~\ref{fig:prec_iter_plot} (left), we report the number of preconditioned MINRES iterations versus number of degrees of freedom. While the value chosen for $\beta_{\bu}$ has an impact on the number of iterations for fixed mesh resolution, we can observe nevertheless an asymptotically constant number of iterations with mesh resolution for most of the values of $\beta_{\bu}$ tested. This can be better observed in Figure~\ref{fig:prec_iter_plot} (right), where we report the number of iterations versus $\beta_{\bu}$ for different values of mesh resolution $\ell$; the number of iterations to achieve convergence is approximately constant with mesh resolution except for values of  $\beta_{\bu}$ within $[50,200]$. In order to have a more complete picture, in the curves labeled as ``solver=pminres'' in Figure~\ref{fig:prec_error_plot}, we report the errors in the displacements (left), fluid pressure (center), and global pressure (right) versus $\beta_{\bu}$, obtained with the preconditioned MINRES solver. As a reference, in the curves labeled as ``solver=lu'' in Figure~\ref{fig:prec_error_plot}, we report the corresponding errors obtained with a robust sparse direct solver (UMFPACK). We can observe that the value of $\beta_{\bu}$ has an impact on the error obtained, 
 particularly noticeable in the case of the preconditioned MINRES solver; only for the smallest value of $\beta_{\bu}=10$, the accuracy of the sparse direct solver can be matched for all unknowns. We have checked that, as expected, as we use finer residual tolerances, the error curves for the MINRES solver become closer to those obtained with the sparse direct solver (at the price of some more iterations).

\begin{figure}
	\begin{center}
		\includegraphics[width=0.49\textwidth]{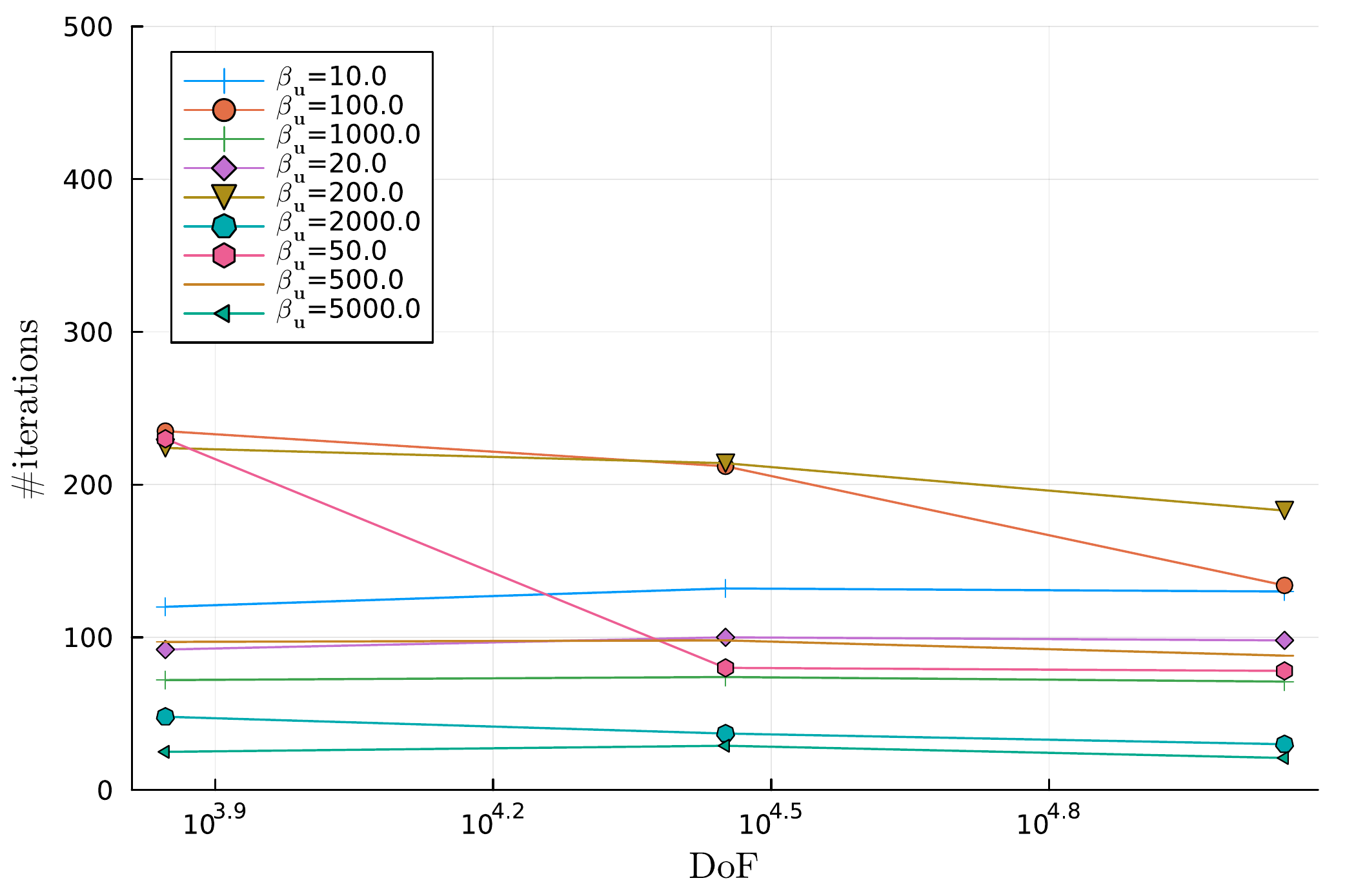}
		\includegraphics[width=0.49\textwidth]{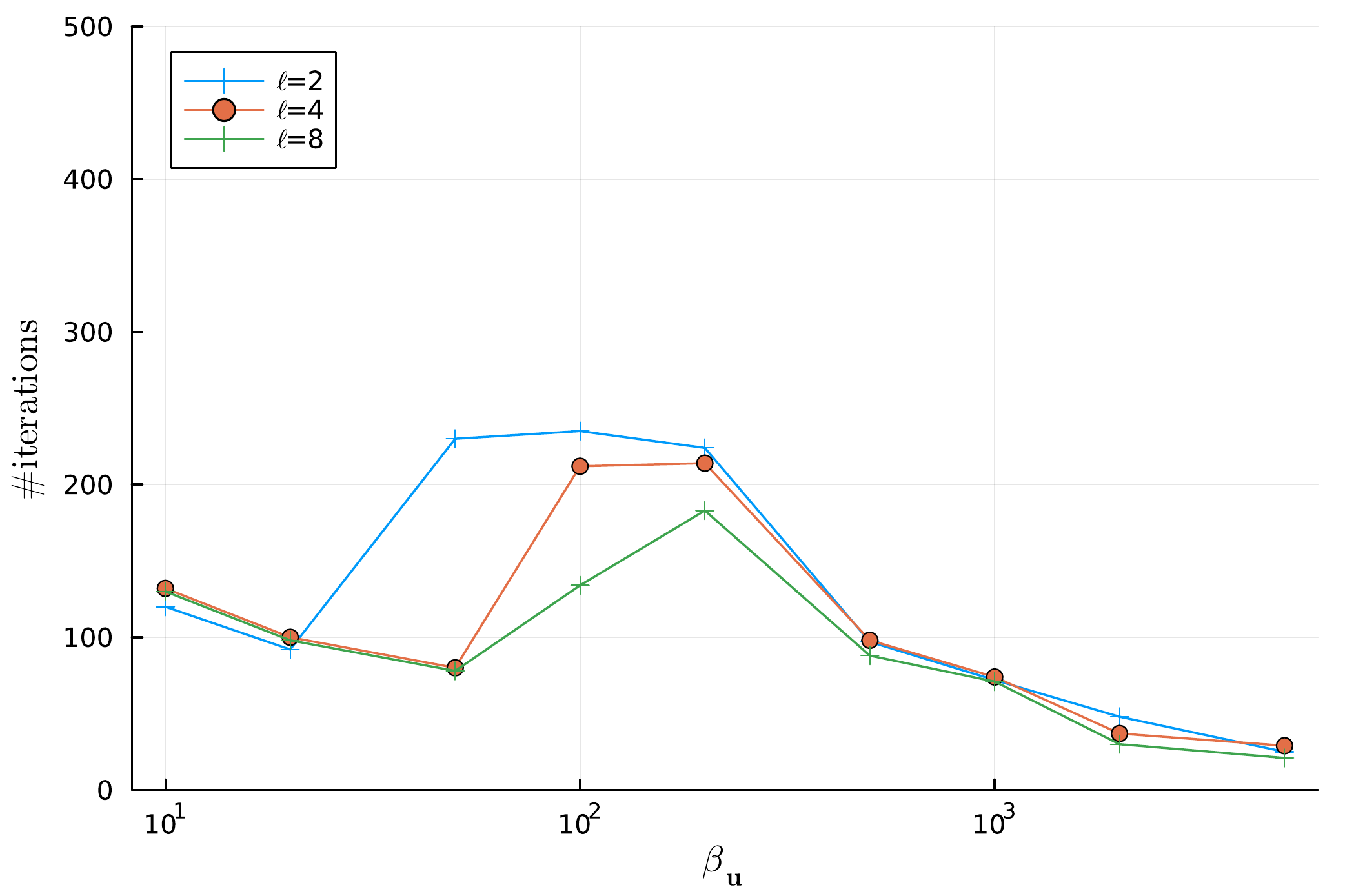}
	\end{center}
	\caption{Number of preconditioned MINRES iterations versus number of degrees of freedom for different values of $\beta_{\bu}$ (left) and  versus $\beta_{\bu}$ for different values of mesh resolution $\ell$ (right).}
	\label{fig:prec_iter_plot}
\end{figure}

\begin{figure}
	\begin{center}
		\includegraphics[width=0.33\textwidth]{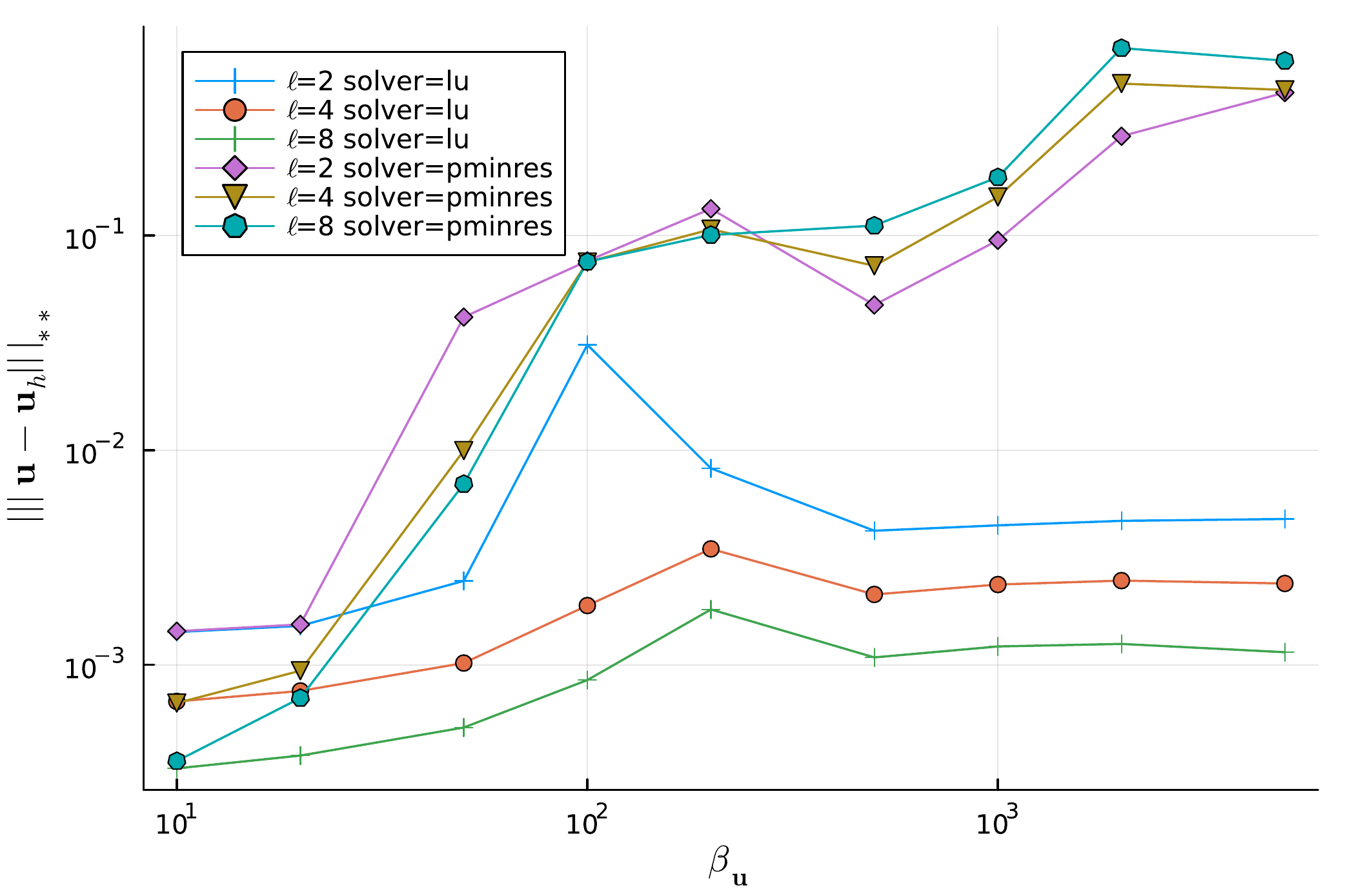}
		\includegraphics[width=0.33\textwidth]{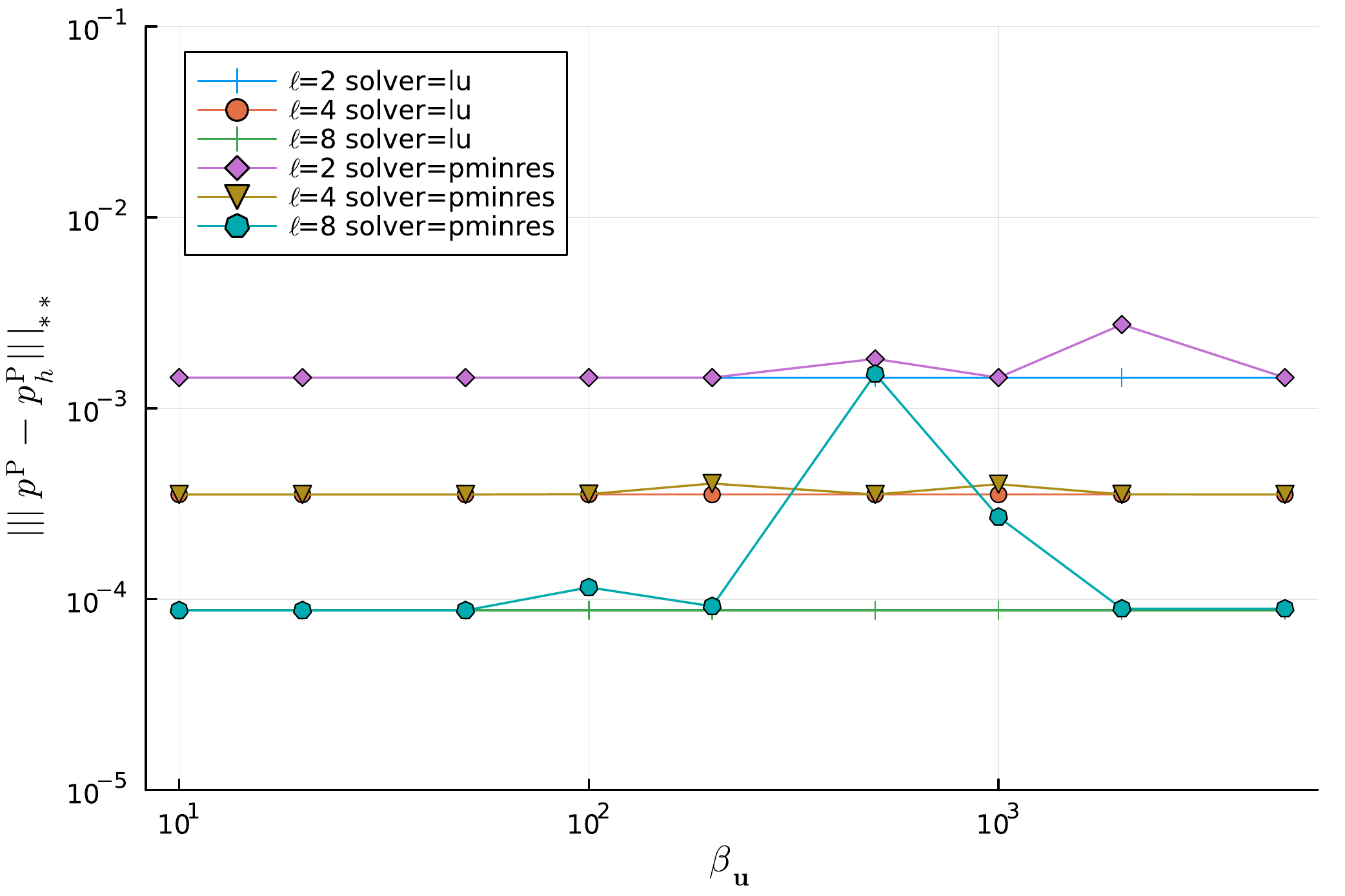}
    \includegraphics[width=0.33\textwidth]{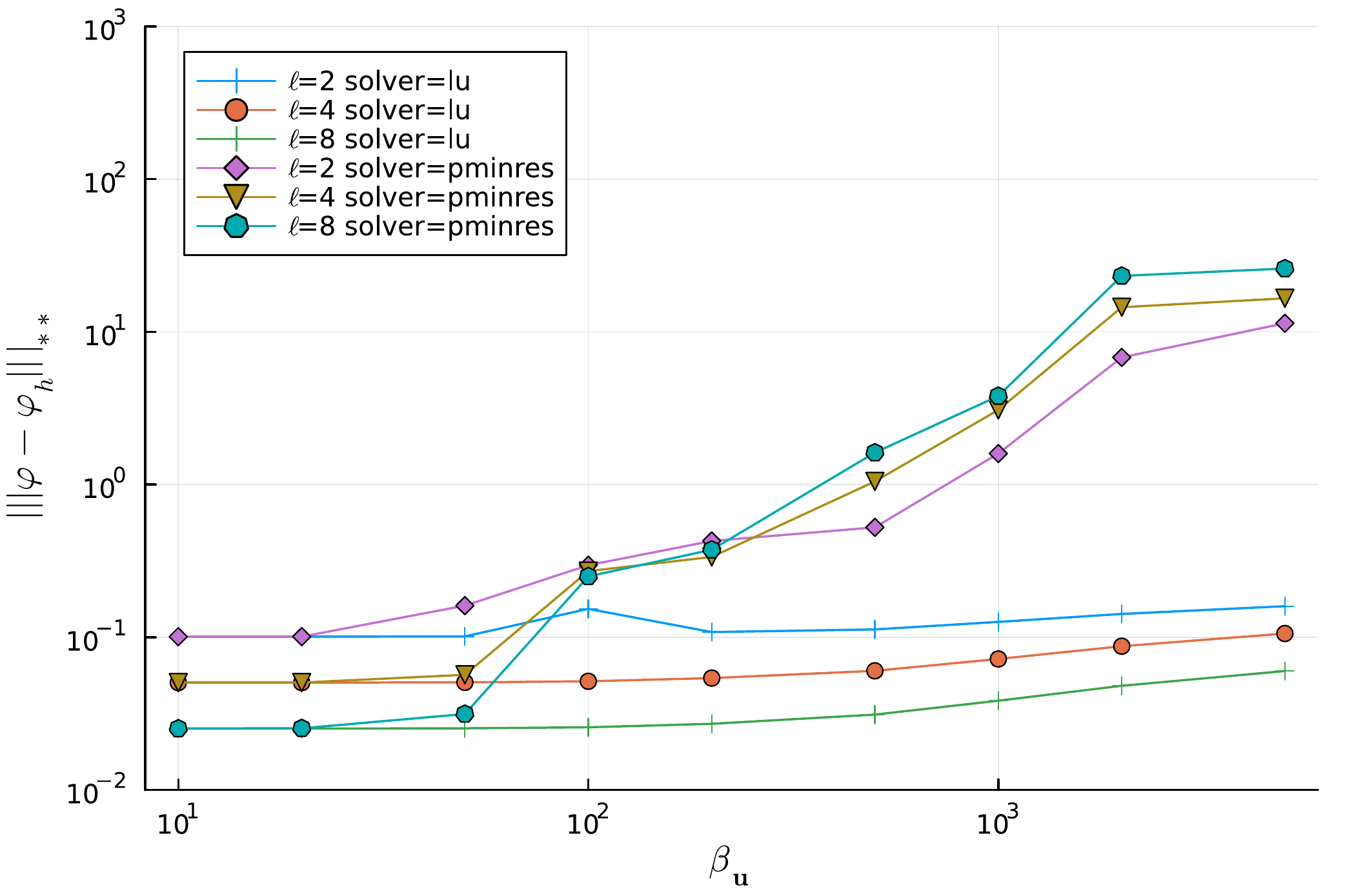}
	\end{center}
	\caption{Error in the displacements (left), fluid pressure (center), and global pressure (right) versus $\beta_{\bu}$ for different values of mesh resolution $\ell$.}
	\label{fig:prec_error_plot}
\end{figure}


\section*{Acknowledgments} This work was initiated during a stay of the second author at Monash University. We acknowledge the support received by the  Sponsored Research \& Industrial Consultancy (SRIC), Indian Institute of Technology Roorkee, India through the faculty initiation grant MTD/FIG/100878; by SERB MATRICS grant MTR/2020/000303;  by the Australian Research Council through the Discovery Project grant DP220103160 and the Future Fellowship grant FT220100496; by the Monash Mathematics Research Fund S05802-3951284; and 
by the Australian Government through the National Computational Infrastructure (NCI) under the NCMAS and ANU Merit Allocation Schemes.

\printbibliography

@article{olshanskii06,
  title={Uniform preconditioners for a parameter dependent saddle point problem with application to generalized {S}tokes interface equations},
  author={Olshanskii, Maxim A and Peters, J{\"o}rg and Reusken, Arnold},
  journal={Numerische Mathematik},
  volume={105},
  pages={159--191},
  year={2006},
  publisher={Springer-Verlag}
}

@article{goirand2021network,
  title={Network-driven anomalous transport is a fundamental component of brain microvascular dysfunction},
  author={Goirand, Florian and Le Borgne, Tanguy and Lorthois, Sylvie},
  journal={Nature communications},
  volume={12},
  number={1},
  pages={1--11},
  year={2021},
  publisher={Nature Publishing Group}
}

@data{DVNQJDUUA_2021,
author = {Lorthois, Sylvie and Goirand, Florian and Le Borgne, Tanguy},
publisher = {Harvard Dataverse},
title = {{Raw data for ``Network-driven anomalous transport is a fundamental component of brain microvascular dysfunction'', Nature Communications, 12, 7295 (2021)}},
year = {2021},
%version = {V1},
%doi = {10.7910/DVN/QJDUUA},
url = {https://doi.org/10.7910/DVN/QJDUUA}
}

@article{holter2017interstitial,
  title={Interstitial solute transport in 3D reconstructed neuropil occurs by diffusion rather than bulk flow},
  author={Holter, Karl Erik and Kehlet, Benjamin and Devor, Anna and Sejnowski, Terrence J and Dale, Anders M and Omholt, Stig W and Ottersen, Ole Petter and Nagelhus, Erlend Arnulf and Mardal, Kent-Andr{\'e} and Pettersen, Klas H},
  journal={Proceedings of the National Academy of Sciences},
  volume={114},
  number={37},
  pages={9894--9899},
  year={2017},
  publisher={National Acad Sciences}
}

@article{baumbach1988mechanics,
  title={Mechanics of cerebral arterioles in hypertensive rats.},
  author={Baumbach, Gary L and Dobrin, Philip B and Hart, Michael N and Heistad, Donald D},
  journal={Circulation research},
  volume={62},
  number={1},
  pages={56--64},
  year={1988},
  publisher={Am Heart Assoc}
}

@article{ahmed19,
title = {Adaptive poromechanics computations based on a posteriori error estimates for fully mixed formulations of {B}iot's consolidation model},
journal = {Computer Methods in Applied Mechanics and Engineering},
volume = {347},
pages = {264--294},
year = {2019},
author = {Elyes Ahmed and Florin Adrian Radu and Jan Martin Nordbotten},
}

@article{alnaes,
	Author = {M. S. Aln{\ae}s and J. Blechta and J. Hake and A. Johansson and B. Kehlet and A. Logg and C. Richardson and J. Ring and M. E. Rognes and G. N. Wells},
	Journal = {Archive of Numerical Software},
	Number = {100},
	Pages = {9--23},
	Title = {The {FE}ni{CS} Project Version 1.5},
	Volume = {3},
	Year = {2015}}

@article{anaya20,
  author = {Anaya, Ver\'onica and De Wijn, Zoa and G{\'o}mez-Vargas, Bryan and
    Mora, David and Ruiz-Baier, Ricardo},
  title = {Rotation-based mixed formulations for an elasticity-poroelasticity interface problem},
  year = {2020},
  volume = {42},
  number = {1},
  pages = {B225--B249},
  journal = {SIAM Journal on {S}cientific {C}omputing}
  }

@article{anaya22,
title = {Robust a posteriori error analysis for rotation-based formulations of the elasticity/poroelasticity coupling},
author= {Anaya, Veronica and Khan, Arbaz and Mora, David and Ruiz-Baier, Ricardo}, 
journal={SIAM Journal on Scientific Computing},
pages = {B964--B995},
volume = {44},
number = {4},
year = {2022}
}

@article{arnold82,
  title={An interior penalty finite element method with discontinuous elements},
  author={Arnold, Douglas N},
  journal={SIAM Journal on Numerical Analysis},
  volume={19},
  number={4},
  pages={742--760},
  year={1982},
  publisher={SIAM}
}

@article{badia20,
 % doi = {10.21105/joss.02520},
  url = {https://doi.org/10.21105/joss.02520},
  year = {2020},
  publisher = {The Open Journal},
  volume = {5},
  number = {52},
  pages = {e2520},
  author = {Santiago Badia and Francesc Verdugo},
  title = {Gridap: An extensible Finite Element toolbox in Julia},
  journal = {Journal of Open Source Software}
}

@misc{hdiv_biot_elasticity_paper_software,
author  = {S. Badia and M. Hornkj{\o}l and A. Khan and K.-A. Mardal and A.F. Mart\'in and R. Ruiz-Baier}, 
title = {Software used in ``{E}fficient and reliable divergence-conforming methods for an elasticity-poroelasticity interface problem''},
%doi   = {10.5281/zenodo.8017784},
url   = {https://doi.org/10.5281/zenodo.8017784},
howpublished = {\url{https://doi.org/10.5281/zenodo.8017784}},
}

@misc{multiphenics,
author = {Francesco Ballarin}, 
  title = {multiphenics -- easy prototyping of multiphysics problems in {FEniCS}},
  howpublished = {\url{https://mathlab.sissa.it/multiphenics}},
  note = {Accessed: 2022-03-10}
}

@book{boffi13,
author = {Boffi, Daniele and Brezzi, Franco and Fortin, Michel},
publisher = {Springer-Verlag},
address = {Berlin},
isbn = {978-3-642-36518-8},
title = {{Mixed Finite Element Methods and Applications}},
year = {2013}, 
volume ={44}
}

@article{brenner04,
  title={Korn's inequalities for piecewise H1 vector fields},
  author={Brenner, Susanne C},
  journal={Mathematics of Computation},
  pages={1067--1087},
  year={2004},
  publisher={JSTOR}
}

@article{bdm85,
  title={Two families of mixed finite elements for second order elliptic problems},
  author={Brezzi, Franco and Douglas, Jim and Marini, L Donatella},
  journal={Numerische Mathematik},
  volume={47},
  number={2},
  pages={217--235},
  year={1985},
  publisher={Springer}
}

@Article{buerger22,
      title={Divergence-conforming methods for transient doubly-diffusive flows: A priori and a posteriori error analysis}, 
      author={Raimund B\"urger and Arbaz Khan and Paul E. M\'endez and Ricardo Ruiz-Baier},
      year={2021},
      journal={Arxiv Preprint arXiv:2102.05819},
}

@Article{buerger19,
    AUTHOR = {B\"urger, Raimund and M\'endez, Paul E. and  Ruiz-Baier, Ricardo},
     TITLE = {On {H}(div)-conforming methods for double-diffusion equations in porous media},
    JOURNAL = {{SIAM} {J}ournal on {N}umerical {A}nalysis},
      YEAR = {2019},
      VOLUME = {57},
      NUMBER = {3},
      PAGES = {1318--1343}
}

@article{carvalho20,
  title={On the use of divergence balanced {H}(div)-{L}2 pair of approximation spaces for divergence-free and robust simulations of {S}tokes, coupled {S}tokes--{D}arcy and {B}rinkman problems},
  author={Carvalho, Pablo GS and Devloo, Philippe RB and Gomes, S{\^o}nia M},
  journal={Mathematics and Computers in Simulation},
  volume={170},
  pages={51--78},
  year={2020},
  publisher={Elsevier}
}

@article{chen11,
  title={{H}(div) conforming finite element methods for the coupled {S}tokes and {D}arcy problem},
  author={Chen, Yumei and Huang, Feiteng and Xie, Xiaoping},
  journal={Journal of Computational and Applied Mathematics},
  volume={235},
  number={15},
  pages={4337--4349},
  year={2011},
  publisher={Elsevier}
}

@article{cihan21,
  title={{3D} mixed virtual element formulation for dynamic elasto-plastic analysis},
  author={Cihan, Mertcan and Hudobivnik, Bla{\v{z}} and Aldakheel, Fadi and Wriggers, Peter},
  journal={Computational Mechanics},
  volume={68},
  number={3},
  pages={1--18},
  year={2021},
  publisher={Springer}
}

@article{dorfler_sinum96,
author = {D\"orfler, Willy},
title = {A Convergent Adaptive Algorithm for {P}oisson's Equation},
journal = {SIAM Journal on Numerical Analysis},
volume = {33},
number = {3},
pages = {1106--1124},
year = {1996}}

@article{eliseussen,
  author = {Eliseussen, Emilie and Rognes, Marie E. and Thompson, Travis B.}, 
  title = {A-posteriori error estimation and adaptivity for multiple-network
  poroelasticity},
  journal={Arxiv Preprint arXiv:2111.13456},
  year = {2021},
}

@book{ern04,
author = {Ern, A and Guermond, J.-L.},
publisher = {Appl. Math. Sci. 159, Springer-Verlag, New York},
title = {{Theory and Practice of Finite Elements}},
year = {2004}
}

@article{fu18,
  title={A strongly conservative hybrid {DG}/mixed {FEM} for the coupling of {S}tokes and {D}arcy flow},
  author={Fu, Guosheng and Lehrenfeld, Christoph},
  journal={Journal of Scientific Computing},
  volume={77},
  number={3},
  pages={1605--1620},
  year={2018},
  publisher={Springer}
}

@article{gira11,
author = {Girault, V. and Pencheva, G. and Wheeler, M. F. and Wildey, T.},
title = {Domain decomposition for poroelasticity and elasticity with {DG} jumps and mortars},
journal = {Mathematical Models and Methods in Applied Sciences},
volume = {21},
number = {01},
pages = {169--213},
year = {2011},
%doi = {10.1142/S0218202511005039}
}

@article{gira20,
title = {A posteriori error estimates for {B}iot system using {E}nriched {G}alerkin for flow},
journal = {Computer Methods in Applied Mechanics and Engineering},
volume = {369},
pages = {e113185},
year = {2020},
issn = {0045-7825},
%doi = {https://doi.org/10.1016/j.cma.2020.113185},
author = {Vivette Girault and Xueying Lu and Mary F. Wheeler},
}

@article{girault15,
  title={A lubrication fracture model in a poro-elastic medium},
  author={Girault, Vivette and Wheeler, Mary F and Ganis, Benjamin and Mear, Mark E},
  journal={Mathematical Models and Methods in Applied Sciences},
  volume={25},
  number={04},
  pages={587--645},
  year={2015},
  publisher={World Scientific}
}

@article{hong16b,
  title={Uniformly stable discontinuous {G}alerkin discretization and robust iterative solution methods for the {B}rinkman problem},
  author={Hong, Qingguo and Kraus, Johannes},
  journal={SIAM Journal on Numerical Analysis},
  volume={54},
  number={5},
  pages={2750--2774},
  year={2016},
  publisher={SIAM}
}

@article{hong19,
  title={Conservative discretizations and parameter-robust preconditioners for {B}iot and multiple-network flux-based poroelasticity models},
  author={Hong, Qingguo and Kraus, Johannes and Lymbery, Maria and Philo, Fadi},
  journal={Numerical Linear Algebra with Applications},
  volume={26},
  number={4},
  pages={e2242},
  year={2019},
  publisher={Wiley Online Library}
}

@article{hong16,
  title={A robust multigrid method for discontinuous {G}alerkin discretizations of {S}tokes and linear elasticity equations},
  author={Hong, Qingguo and Kraus, Johannes and Xu, Jinchao and Zikatanov, Ludmil},
  journal={Numerische Mathematik},
  volume={132},
  number={1},
  pages={23--49},
  year={2016},
  publisher={Springer}
}

@article{kanschat18,
  title={A finite element method with strong mass conservation for {B}iot’s linear consolidation model},
  author={Kanschat, Guido and Riviere, Beatrice},
  journal={Journal of Scientific Computing},
  volume={77},
  number={3},
  pages={1762--1779},
  year={2018},
  publisher={Springer}
}

@article{kanschat10,
  title={A strongly conservative finite element method for the coupling of {S}tokes and {D}arcy flow},
  author={Kanschat, Guido and Riviere, B{\'e}atrice},
  journal={Journal of Computational Physics},
  volume={229},
  number={17},
  pages={5933--5943},
  year={2010},
  publisher={Elsevier}
}

@article{konno11,
  title={{H}(div)-conforming finite elements for the {B}rinkman problem},
  author={K{\"o}nn{\"o}, Juho and Stenberg, Rolf},
  journal={Mathematical Models and Methods in Applied Sciences},
  volume={21},
  number={11},
  pages={2227--2248},
  year={2011},
  publisher={World Scientific}
}

@article{kumar20,
  author = {Kumar, Sarvesh and Oyarz\'ua, Ricardo and Ruiz-Baier, Ricardo and Sandilya, Ruchi},
  title = {Conservative discontinuous finite volume and mixed schemes for a new four-field formulation in poroelasticity},
  journal = {ESAIM: Mathematical Modelling and Numerical Analysis},
  year = {2020},
  volume = {54},
  number = {1},
  pages = {273--299}}

@article{lee17,
author = {Lee, J. and Mardal, Kent-Andre and Winther, R.},
title = {Parameter-Robust Discretization and Preconditioning of {B}iot's Consolidation Model},
journal = {SIAM Journal on Scientific Computing},
volume = {39},
number = {1},
pages = {A1--A24},
year = {2017}
}

@article{li20,
  title={Residual-based a posteriori error estimates of mixed methods for a three-field {B}iot’s consolidation model},
  author={Li, Yuwen and Zikatanov, Ludmil T},
  journal={IMA Journal of Numerical Analysis},
  volume={42},
  number={1},
  pages={620--648},
  year={2022},
  publisher={Oxford University Press}
}

@article{mikelic12,
author = {Mikeli{\'c}, Andro and Wheeler, Mary F.},
title = {On the interface law between a deformable porous medium containing a viscous fluid and an elastic body},
journal = {Mathematical Models and Methods in Applied Sciences},
volume = {22},
number = {11},
pages = {1250031},
year = {2012},
%doi = {10.1142/S0218202512500315}
}

@book{verfurth13,
author = {Verf{\"{u}}rth, R{\"{u}}diger},
booktitle = {A Posteriori Error Estim. Tech. Finite Elem. Methods},
publisher = {Oxford University Press},
address = {Oxford},
title = {{A Posteriori Error Estimation Techniques for Finite Element Methods}},
year = {2013}
}

@article{zeng19,
  title={An {H}(div)-conforming Finite Element Method for {B}iot’s Consolidation Model},
  author={Zeng, Yuping and Cai, Mingchao and Wang, Feng},
  journal={East Asian Journal on Applied Mathematics},
  volume={9},
  number={3},
  pages={558},
  year={2019},
  publisher={NIH Public Access}
}

@article{zeng20,
  title={Convergence Analysis of {H}(div)-Conforming Finite Element Methods for a Nonlinear Poroelasticity Problem},
  author={Zeng, Yuping and Weng, Zhifeng and Liang, Fen},
  journal={Discrete Dynamics in Nature and Society},
  volume={2020},
  year={2020},
  pages = {9494389},
  publisher={Hindawi}
}

@article{Verdugo2022,
  %doi = {10.1016/j.cpc.2022.108341},
  url = {https://doi.org/10.1016/j.cpc.2022.108341},
  year = {2022},
  month = jul,
  publisher = {Elsevier {BV}},
  volume = {276},
  pages = {108341},
  author = {Francesc Verdugo and Santiago Badia},
  title = {The software design of {G}ridap: A Finite Element package based on the {J}ulia {JIT} compiler},
  journal = {Computer Physics Communications}
}

@article{Badia2022,
  %doi = {10.21105/joss.04157},
  url = {https://doi.org/10.21105/joss.04157},
  year = {2022},
  month = jun,
  publisher = {The Open Journal},
  volume = {7},
  number = {74},
  pages = {4157},
  author = {Santiago Badia and Alberto F. Mart\'in and Francesc Verdugo},
  title = {{GridapDistributed}: a massively parallel finite element
		toolbox in Julia},
  journal = {Journal of Open Source Software}
}

@Article{mumps,
    title = {A Fully Asynchronous Multifrontal Solver Using Distributed
      Dynamic Scheduling},
    author = {P. R. Amestoy and I. S. Duff and J. Koster and J.-Y.
      L'Excellent},
    journal = {SIAM Journal on Matrix Analysis and Applications},
    year = {2001},
    volume = {23},
    number = {1},
    pages = {15--41},
  }

@article{smith2007interstitial,
	title={Interstitial transport and transvascular fluid exchange during infusion into brain and tumor tissue},
	author={Smith, Joshua H and Humphrey, Joseph AC},
	journal={Microvascular research},
	volume={73},
	number={1},
	pages={58--73},
	year={2007},
	publisher={Elsevier}
}

@article{vinje2019respiratory,
	title={Respiratory influence on cerebrospinal fluid flow--a computational study based on long-term intracranial pressure measurements},
	author={Vinje, Vegard and Ringstad, Geir and Lindstr{\o}m, Erika Kristina and Valnes, Lars Magnus and Rognes, Marie E and Eide, Per Kristian and Mardal, Kent-Andre},
	journal={Scientific reports},
	volume={9},
	number={1},
	pages={9732},
	year={2019},
	publisher={Nature Publishing Group UK London}
}

@article{bergel1961static,
	title={The static elastic properties of the arterial wall},
	author={Bergel, DH},
	journal={The Journal of physiology},
	volume={156},
	number={3},
	pages={445},
	year={1961},
	publisher={Wiley-Blackwell}
}

@article{mestre2018flow,
	title={Flow of cerebrospinal fluid is driven by arterial pulsations and is reduced in hypertension},
	author={Mestre, Humberto and Tithof, Jeffrey and Du, Ting and Song, Wei and Peng, Weiguo and Sweeney, Amanda M and Olveda, Genaro and Thomas, John H and Nedergaard, Maiken and Kelley, Douglas H},
	journal={Nature communications},
	volume={9},
	number={1},
	pages={4878},
	year={2018},
	publisher={Nature Publishing Group UK London}
}
 
\end{document}